\newtheorem{thm}{Theorem}[section]
\newtheorem{lem}[thm]{Lemma}
\newtheorem{prop}[thm]{Proposition}
\newtheorem{defin}[thm]{Definition}
\newtheorem{remark}[thm]{Remark}
\numberwithin{equation}{section}
\def\enne{\mathbb{N}}
\def\erre{\mathbb{R}}
\def\bu{{\bf u}}
\DeclareMathOperator*{\esssup}{ess\,sup}
\DeclareMathOperator*{\essinf}{ess\,inf}
\def\eps{\varepsilon}
\def\beq{\begin{equation}}
\def\eeq{\end{equation}}
\def\to{\rightarrow}
\def\wto{\rightharpoonup}
\def\wstarto{\stackrel{*}{\rightharpoonup}}
\def\embed{\hookrightarrow}
\def\norm #1{\left\|#1\right\|}
\def\ip #1#2{\left<#1,#2\right>}
\begin{document}
\title[Nonlinear evolution equations via modular spaces]
{An extended variational theory for\\nonlinear evolution equations via modular spaces}

\author{Alexander Menovschikov}
\address[Alexander Menovschikov]{Department of Mathematics, University of Hradec Kr\'alov\'e, 
Rokitansk\'eho 62, 500 03 Hradec Kr\'alov\'e, Czech Republic, \&
Faculty of Economics, University of South Bohemia, Studentsk\'a 13, 370 05 \v{C}esk\'e Bud\v{e}jovice, Czech Republic}
\email{alexander.menovschikov@uhk.cz}
\urladdr{}

\author{Anastasia Molchanova}
\address[Anastasia Molchanova]{Institute of Analysis and Scientific Computing TU Vienna,
Wiedner Hauptstra{\ss}e 8-10, 1040 Wien, Austria, \&
Faculty of Mathematics, University of Vienna, Oskar-Morgenstern-Platz 1, 1090 Wien, Austria}
\email{anastasia.molchanova@univie.ac.at}
\urladdr{https://www.mat.univie.ac.at/$\sim$molchanova/}

\author{Luca Scarpa}
\address[Luca Scarpa]{Department of Mathematics, Politecnico di Milano, 
Via E.~Bonardi 9, 20133 Milano, Italy, \&
Faculty of Mathematics, University of Vienna, 
Oskar-Morgenstern-Platz 1, 1090 Wien, Austria}
\email{luca.scarpa@polimi.it}
\urladdr{http://www.mat.univie.ac.at/$\sim$scarpa}

\subjclass[2010]{35A15; 35D30; 35K67}

\keywords{Nonlinear evolution equations; variational approach; modular spaces;
Musielak--Orlicz--Sobolev spaces}   

\begin{abstract}
  We propose an extension of the classical variational theory of evolution equations
  that accounts for dynamics also in possibly non-reflexive and non-separable spaces.
  The pivoting point is to establish a novel variational structure, 
  based on abstract modular spaces associated to a given convex function.
  Firstly, we show that the new variational triple is suited for framing the evolution, 
  in the sense that a novel duality paring can be introduced and a 
  generalised computational chain rule holds. Secondly, we prove well-posedness 
  in an extended variational sense for evolution equations, 
  without relying on any reflexivity assumption
  and any polynomial requirement on the nonlinearity.
  Finally, we discuss 
  several important applications that can be addressed in this framework:
  these cover, but are not limited to, equations in Musielak--Orlicz--Sobolev spaces, 
  such as
  variable exponent, Orlicz, weighted Lebesgue, and double-phase spaces.
\end{abstract}

\maketitle


\section{Introduction}
\setcounter{equation}{0}
\label{sec:intro}
In this paper we deal with evolution equations on a Hilbert space $H$
in the form
\beq
  \label{eq0}
  \partial_t u + A(u)\ni f\,, \qquad u(0)=u_0\,,
\eeq
where $A:=\partial\varphi$ is the subdifferential of
a proper, convex, and lower semicontinuous function $\varphi\colon H\to(-\infty,+\infty]$.
Moreover, $f\colon (0,T)\to H$ is a prescribed forcing term, with $T>0$ being the final reference time,
and the initial datum $u_0$ is given in $H$.

From the mathematical perspective, nonlinear evolution equations in Hilbert or Banach spaces
have been extensively investigated in the last decades. 
Starting from the pioneering literature of the 70s,
for which we refer the reader to
Barbu \cite[Ch.~4]{barbu-monot},
their study represents nowadays one of the most flourishing fields of modern 
mathematical analysis, with applications
ranging from partial differential equations
to functional analysis. In this regard, 
we point out the contributions 
\cite{diben-show, colli-visin, colli, sch-seg-stef, stef}
on doubly nonlinear evolution equations, 
and \cite{ak-stef, ak-stef2, miel-stef} on variational principles,
as well as the references therein.
Well-posedness for equations in the form \eqref{eq0} has been tackled in 
several frameworks, and various concepts of solutions have been proposed.
The specific regularity of the solutions strongly depends on the assumptions
on the initial datum $u_0$ and the forcing term~$f$.

The most classical (and the strongest) assumption on the initial datum is that 
$u_0\in D(A)$, where $D(A)$
stands for the effective domain of the maximal monotone operator $A$ on $H$.
In this framework, existence of strong solutions for \eqref{eq0} has been 
thoroughly studied in the Hilbert case 
in relation to the nonlinear extension of the celebrated Hille--Yosida theory:
this was 
first accomplished by K\={o}mura \cite{kom3}
and then the result was well reviewed in the monograph 
by Brezis \cite{brezis},
not necessarily requiring $A$ to be cyclically monotone.
More in detail, the corresponding homogeneous equation (i.e.~with $f=0$)
admits a unique strong solution $u$, in the sense that $u\in W^{1,\infty}(0,T; H)$
and the differential inclusion \eqref{eq0} is satisfied almost everywhere on $(0,T)$.
In the nonhomogeneous case, existence of strong solutions in $W^{1,\infty}(0,T; H)$
is ensured if the forcing term satisfies at least $f\in BV(0,T; H)$.
In the special case of subdifferential operators $A=\partial\varphi$,
existence of strong solutions in $W^{1,p}(0,T; H)$, for $p\in[2,+\infty)$, is ensured 
if $f\in L^p(0,T; H)$.
Here, the proof can be based on a regularisation and 
passage-to-the-limit procedure based on a Yosida-type approximation
of the operator $A$.

However, in several situations the assumption that the 
initial datum belongs to the domain of $A$ is too restrictive.
In this direction, weaker concepts of solution have been proposed 
in order to give appropriate meaning to equation \eqref{eq0} in more general frameworks.
In this direction, the initial datum can be required to satisfy only $u_0\in\overline{D(A)}$, 
the closure in $H$ of the domain of $A$, and the forcing 
term is only taken as $f\in L^1(0,T; H)$.
Under these condition, in Brezis \cite{brezis}
existence and uniqueness of a weak solution
$u\in C^0([0,T]; H)$ is proved, 
in the sense that there exist some sequences of data 
$(u_{0,n}, f_n)_n\subset D(A)\times L^1(0,T; H)$
and a sequence of respective strong solutions $(u_n)_n\in W^{1,1}(0,T; H)$,
satisfying \eqref{eq0} for every $n\in\enne$, and such that, as $n\to\infty$,
$u_{0,n}\to u_0$ in~$H$, $f_n\to f$ in $L^1(0,T; H)$,
and $u_n\to u$ in $C^0([0,T]; H)$. 
This concept of weak solution is certainly satisfactory,
as it allows to give sense to the equation in more general 
settings, and it generalises the notion of strong solution.
Furthermore, whenever $A$ is cyclically monotone 
with $A=\partial\varphi$, every weak solution is also a strong 
solution as soon as $u_0\in D(\partial\varphi)$ and $f\in L^2(0,T; H)$, for example.

Alternative concepts of weak solutions 
have been proposed and studied, also for evolutions in Banach spaces,
especially relying on the notion of mild and integral solutions.
In this direction, the reader can refer to the classical pioneering works by 
B\'enilan \cite{ben}, B\'enilan \& Brezis \cite{ben-brezis},
Crandall \& Evans \cite{cran-evans}, Crandall \& Liggett \cite{cran-ligg},
Crandall \& Pazy \cite{cran-pazy, cran-pazy2}, Kato \cite{kato, kato2},
and Komura \cite{kom, kom2}. 

One of the main drawbacks of the above-mentioned notion of weak solution is that 
the differential inclusion $f- \partial_t u \in A(u)$
is not explicitly specified. More precisely, 
for weak solutions $u$ the meaning of the differential inclusion 
is somehow passed by: although 
$u$ is obtained as limit of suitable approximations
(strong solutions, time discretisation, etc.), each one solving the differential 
inclusion almost everywhere, it is not granted that $u$
itself satisfies \eqref{eq0} in some sense on $(0,T)$.
This issue has been successfully overcome by proposing a further concept 
of weak solution for the problem, for which the differential 
inclusion could be made explicit in some appropriate sense: 
it is the case of the well-celebrated 
variational theory introduced by Lions \cite{lions}.
The main idea behind this is to introduce, apart from the 
given Hilbert space $H$, a further Banach space $V$, which is 
assumed to be separable, reflexive, and continuously and densely embedded in~$H$.
If one identifies~$H$ with its dual $H^*$, then it is possible to work on the variational triplet 
$V\embed H \embed V^*$.
The maximal monotone operator $A$ is then considered as an operator $A\colon V\to 2^{V^*}$
satisfying some natural coercivity and boundedness conditions in the form
\beq\label{pol_coerc}
  \ip{y}{\varphi}_{V^*,V}\geq c\norm{\varphi}^p\,, \quad
  \norm{y}_{V^*}^{\frac{p}{p-1}}\leq C\left(1+\norm{\varphi}^p_V\right)
  \qquad\forall\,\varphi\in V\,,\quad\forall\,y\in A(\varphi)\,,
\eeq
where $p>1$ is a given constant.
In this framework, for every initial datum $u_0\in H$ and forcing term $f\in L^1(0,T; H)$
there exists a unique variational solution $u$, in the sense that 
$u\in W^{1,\frac{p}{p-1}}(0,T; V^*)\cap L^p(0,T; V)$ satisfies the differential inclusion
\eqref{eq0} as an equality in $V^*$, almost everywhere on $(0,T)$. 
Let us stress that variational solutions are effective in 
rendering the differential inclusion explicitly for $u$,
and not only as limit of suitable approximations: indeed, 
one is actually able to prove that the inclusion $f-\partial_tu \in A(u)$
holds in $V^*$, almost everywhere on $(0,T)$.

The main downside of the classical variational theory
is that the operator $A$ needs to possess relatively nice 
polynomial behaviour. While on the one hand
this is certainly enough to cover several classes of evolution equations,
such as reaction-diffusion and $p$-Laplace equations, 
on the other hand numerous important applications are left out.
This happens in particular when the operator $A$ fails 
to satisfy the coercivity-boundedness polynomial conditions \eqref{pol_coerc}
for a certain exponent $p$. Quite common $A$ is actually needed to be coercive and bounded in some spaces, but the respective 
exponents are different: this is very classical, 
for example, for $p(x)$-Laplace and double-phase equations, as well as 
equations in weighted Sobolev spaces. Alternatively,
it may happen that $A$ is coercive and bounded as required, 
but the natural space $V$ associated to it is not reflexive or not separable:
this is the case, among many others, of
reaction-diffusion equations with singular (i.e.~superpolynomial) potentials.
In all these pathological scenarios, if the initial datum only satisfies $u_0\in H$
an appropriate concept of variational solution is not known so far:
one is only able to obtain strong solutions to \eqref{eq0}
by forcing the initial datum to belong to $D(A)$.

These issues naturally call for an extension of the classical variational theory,
in order to establish, in some appropriate variational sense, weak well-posedness of 
evolution equations in the form \eqref{eq0} with general initial datum $u_0\in H$, 
also when assumption \eqref{pol_coerc} is not satisfied.
This is the main focus of the present paper.

The idea is to abandon the introduction of the Banach space $V$,
and to work instead in the modular spaces naturally associated to $\varphi$.
Indeed, one can define the small and large modular spaces as, respectively,
\begin{align*}
	L_\varphi := \{v \in H :\; \exists \,\alpha >0 :\quad \varphi(\alpha v) < +\infty\}\,,\\
	E_\varphi := \{v \in H :\; \forall\,\alpha >0 :\quad \varphi(\alpha v) < +\infty\}\,.
\end{align*}
As $E_\varphi$ is generally dense in $H$ 
(details are given in Section~\ref{sec:general_setting} below),
by identifying $H$ with its dual one has the natural variational structure 
\[
  E_\varphi \embed L_\varphi \embed H \embed E_\varphi^*\,.
\]
In the classical variational theory, one is implicitly assuming 
that $E_\varphi=L_\varphi=V$ is separable and reflexive: this is satisfied 
only in very specific situations, for example when 
$\varphi$ satisfy some suitable $\Delta_2$ and $\nabla_2$ conditions
(details are given in Section~\ref{sec:applications} below).
In general, however, the spaces $E_\varphi$ and $L_\varphi$ are different
and not necessarily reflexive. The pivoting idea of the entire work is 
to work in the variational triplet 
\[
  L_\varphi \embed H \embed E_\varphi^*\,.
\]
Beyond the reflexivity issue, 
a further problem is that $E_\varphi^*$ is {\it not} the dual of $L_\varphi$, hence no
duality pairing is in principle defined between $E_\varphi^*$ and $L_\varphi$.
The preliminary step is then to show that, nonetheless, it is possible to
define a new duality $[\cdot, \cdot]$ between $E_\varphi^*$ and $L_\varphi$
generalising the scalar product of $H$. This guarantees indeed that the 
triple $(L_\varphi, H, E_\varphi^*)$, despite being unconventional in this sense, 
is suited for framing the evolution problem in a variational way:
the solution $u$ is expected to be $L_\varphi$-valued, and the differential 
inclusion \eqref{eq0} will be intended in $E_\varphi^*$.

The first main result of the paper is a fundamental computational tool
collected in Theorem~\ref{thm:chain} below, establishing 
that the novel variational triplet $(L_\varphi, H, E_\varphi^*)$ endowed 
with the novel duality pairing $[\cdot, \cdot]$ satisfies 
the well-known ``chain rule'' formula for the square of the $H$-norm.
This is highly nontrivial, since the spaces $L_\varphi$ and $E_\varphi^*$ are not 
reflexive and separable in general, hence the classical results do not apply.
In particular, the non-separability of the spaces in play forces to 
introduce a different notion of measurability for vector-valued functions, 
as the classical strong measurability in the Bochner sense is too restrictive in this framework.
The chain rule is proved using an elliptic-in-time regularisation 
by convolution and passage to the limit, where a key role is played
by an abstract version of the Jensen inequality proved by Haase \cite{haase}.

The second main result of the paper is contained in Theorems~\ref{thm}--\ref{thm2}
and establishes the variational well-posedness of equation \eqref{eq0}
is the new variational setting $(L_\varphi, H, E_\varphi^*)$.
The structure of the prof is based on a Yosida-type approximation on $\partial\varphi$
and passage to the limit. Let us stress that due to the lack of reflexivity
several compactness issues arise, especially in the direction of identifying 
the nonlinearity $\partial\varphi$ at the limit.

The main novelty of this paper is to provide an extended variational structure 
that allows to frame also singular evolution equations in possibly non-reflexive spaces.
This is fundamental as it provides a unifying variational framework 
for a wide variety of problems,
such as equations in Musielak--Sobolev--Orlicz spaces,
that so far have been studied independently by hand.
In this regard, Musielak--Orlicz spaces and their special cases 
are receiving much attention at the present time.
A survey of nonlinear PDEs in Musielak--Orlicz spaces
is presented in Chlebicka \cite{Chl2018}, 
especially in the cases of variable exponent, 
Orlicz, weighted Lebesgue, and double-phase spaces.
Further contributions on Musielak--Orlicz--Sobolev spaces are
also given in the recent articles 
\cite{Bul2019, Chl20182, Fis2020} and \cite{Har2016, Ort2018, Yan2019}.
Such spaces were introduced in the late 50s in the works of Musielak and Orlicz
\cite{MusOrl1959-1, MusOrl1959-2}.
The theory has then been viewed as a special case of a more general approach,
based on modular spaces: the main idea is to consider a convex functional (a so-called \textit{modular})
on a vector space instead of the usual integrals of convex functions
involved in Musielak--Orlicz spaces. 
This approach is widely employed already in several fields, such as 
approximation and interpolation theory (e.g.~\cite{Koz1989, Hud1980, youss, Ahmi2018}) 
and in operator theory (e.g.~\cite{Kha1990, Dom2020, Pak2013, Raj2012}). 
Also, metric space theory on such spaces is developed (see Chistyakov~\cite{Chist2015}). 
Recent results concerning existence of solutions 
to parabolic equations in Orlicz spaces have been obtained in
\cite{el-mes1, el-mes2, ben-with-zimm, chl, Chl20182, gw-zimm, zhan-zhou}.
For a highly detailed presentation of the existing literature 
we refer specifically to \cite[\S~3]{Chl2018}.

The importance of the extended variational approach introduced in this work
is extremely evident in all those scenarios where existence of strong 
solutions is out of reach, due to some specific pathological structure 
of the problems themselves.
In this regard, a special mention goes to nonlinear evolution equations
with random forcing. Indeed, in the stochastic setting existence of analytically 
strong solutions may be severely problematic if the nonlinearity $A$ is too
singular, due to the presence of extra second order contributions in the energy balance
(see for example Gess \cite{gess} for the case of sub-homogenous potential).
Consequently, forcing the initial datum to belong to $D(A)$ does not help in this case,
and it is fundamental to have at hand a valid well-posedness theory 
in a variational sense, in order to give appropriate meaning to possibly singular 
evolution equations with general $u_0\in H$. For these reasons, 
we believe that the current work represents also a preliminary step 
in the direction of building a generalised variational theory for stochastic 
evolution equations.
The variational theory for SPDEs was 
originally introduced by Pardoux \cite{Pard} and 
Krylov \& Rozovski{\u\i} \cite{KR-spde}
(see also \cite{LiuRo} and the references therein)
under the classical reflexivity--separability conditions on the space $V$
and under the polynomial requirements \eqref{pol_coerc} on the nonlinearity.
Some first contributions in the spirit of Orlicz spaces have been 
given so far only in very special cases, 
namely in 
Barbu \& Da Prato \& R\"{o}ckner \cite[Ch.~4]{barbu-daprato-rock-book} for the stochastic 
porous media equation, in 
\cite{barbu-semilin, mar-scar-diss, mar-scar-ref, orr-scar}
for semilinear stochastic equations, in \cite{mar-scar-div, mar-scar-note} for 
stochastic divergence-form equations, and in \cite{scar-SCH, scar-SVCH}
for the stochastic Cahn--Hilliard equation.
Nonetheless, a general extended variational theory for stochastic 
evolution equations taking into account possibly non-reflexive spaces
and non-polynomial nonlinearities is missing: in this direction,
the present work represents a valuable candidate 
for obtaining an extension to the stochastic case, which is itself currently in preparation.

Finally, let us briefly present the structure of the paper.
In Section~\ref{sec:modular} we collect some preliminary general results 
on modular spaces, while in Section~\ref{sec:general_setting} we 
introduced the novel variational setting and state our main results.
Section~\ref{sec:chain} contains then the proof of the generalised chain rule,
and Section~\ref{sec:proof} is focused on the proof of well-posedness.
Eventually, in Section~\ref{sec:applications} we collect several important 
applications that can be treated in this framework.


\section{Preliminaries on modular spaces}
\label{sec:modular}
In this section, we recall the main definitions and properties
concerning the theory of modular spaces, and 
we prove some preliminary abstract results that 
will be useful in the sequel.
For the details on the theory of modular spaces, we refer the reader to Musielak \cite{mus}.

Let $X$ be a real Banach space with dual $X^*$.
The norm in $X$ and the duality between $X^*$
and $X$ will be denoted by the symbols $\norm{\cdot}_X$
and $\ip{\cdot}{\cdot}_{X^*,X}$, respectively.

\begin{defin}
	A convex semi-modular on $X$
	is a convex functional $\varphi \colon X \to [0, \infty]$ 
	satisfying the following conditions:
	\begin{itemize}
		\item $\varphi(0)=0$,
		\item if $\varphi(\alpha x)=0$ for all $\alpha>0$, then $x=0$;
		\item $\varphi(-x) = \varphi(x)$ for all $x \in X$.
	\end{itemize}
	If also $\varphi(x)=0$ iff $x=0$, then $\varphi$ is called convex modular.
\end{defin}

In this section, $\varphi$ is a lower semicontinuous convex semi-modular on $X$.
It is possible to naturally associate to $\varphi$ the modular spaces
\begin{align}
	\label{L_phi}
	L_\varphi := \{x \in X :\; \exists \,\alpha >0 :\quad \varphi(\alpha x) < +\infty\}\,,\\
	\label{E_phi}
	E_\varphi := \{x \in X :\; \forall\,\alpha >0 :\quad \varphi(\alpha x) < +\infty\}\,.
\end{align}
It is not difficult to check that $E_\varphi$ and $L_\varphi$ are real linear spaces, with 
\[
  E_\varphi \subset L_\varphi \subset X\,.
\]
Furthermore, we set
\begin{equation}\label{lux}
	\|x\|_{\varphi} := \inf \{\lambda > 0:\quad \varphi(x/\lambda) \leq 1 \}\,, \qquad x\in L_\varphi\,.
\end{equation} 
It is well-known (see for example \cite[Chapter I]{mus}) that $\norm{\cdot}_\varphi$
defines a norm on $E_\varphi$ and $L_\varphi$, called the \textit{Luxemburg} norm,
so that $(E_\varphi, \norm{\cdot}_\varphi)$ and $(L_\varphi, \norm{\cdot}_\varphi)$
are linear normed spaces. From the definition of 
$\norm{\cdot}_\varphi$, the properties collected in the following Lemma are well-known: we refer 
for example to Musielak~\cite[Thms.~1.5--1.6, Lem.~2.4]{mus}.
\begin{lem}\label{lem:prop}
  The following properties hold:
  \begin{enumerate}
  \item for every $x_1,x_2\in L_\varphi$, 
  if $\varphi(\alpha x_1)\leq\varphi(\alpha x_2)$ for all $\alpha>0$,
  then $\norm{x_1}_{\varphi}\leq\norm{x_2}_{\varphi}$;
  \item for every $x\in L_\varphi$ with $\norm{x}_{\varphi}<1$, 
  it holds $\varphi(x)\leq\norm{x}_{\varphi}$;
  \item for every $x\in L_\varphi$ with $\norm{x}_{\varphi}>1$, 
  it holds $\varphi(x)\geq\norm{x}_{\varphi}$;
  \item for every sequence $(x_n)_n\subset L_\varphi$ and $x\in L_\varphi$, it holds
  \[
  \lim_{n\to\infty}\norm{x_n-x}_\varphi=0 \qquad\text{iff}\qquad
  \lim_{n\to\infty}\varphi(\alpha(x_n-x))=0 \quad\forall\,\alpha>0\,;
  \]
  \item for every sequence $(x_n)_n\subset L_\varphi$, it holds
  \[
  \lim_{n,k\to\infty}\norm{x_n-x_k}_\varphi=0 \qquad\text{iff}\qquad
  \lim_{n,k\to\infty}\varphi(\alpha(x_n-x_k))=0 \quad\forall\,\alpha>0\,.
  \]
  \end{enumerate}
\end{lem}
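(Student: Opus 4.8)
The plan is to reduce everything to two elementary monotonicity properties of the one-dimensional section $t\mapsto\varphi(tx)$. Since $\varphi$ is convex, nonnegative, and vanishes at the origin, for fixed $x\in L_\varphi$ the map $t\mapsto\varphi(tx)$ is nondecreasing on $[0,\infty)$, and the map $t\mapsto\varphi(tx)/t$ is nondecreasing on $(0,\infty)$; both facts follow at once by writing $sx=(s/t)(tx)+(1-s/t)\,0$ for $0<s<t$ and using convexity together with $\varphi(0)=0$ and $\varphi\geq0$. In particular $\lambda\mapsto\varphi(x/\lambda)$ is nonincreasing, so the sublevel set $\{\lambda>0:\varphi(x/\lambda)\leq1\}$ is an up-closed half-line with left endpoint $\norm{x}_\varphi$; hence $\varphi(x/\lambda)\leq1$ for every $\lambda>\norm{x}_\varphi$ and $\varphi(x/\lambda)>1$ for every $0<\lambda<\norm{x}_\varphi$. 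These observations are the workhorses of the whole argument, and I would record them first.

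Property~(1) is then immediate: the hypothesis $\varphi(\alpha x_1)\leq\varphi(\alpha x_2)$ for all $\alpha>0$ gives, with $\alpha=1/\lambda$, the inclusion $\{\lambda:\varphi(x_2/\lambda)\leq1\}\subseteq\{\lambda:\varphi(x_1/\lambda)\leq1\}$, and passing to infima yields $\norm{x_1}_\varphi\leq\norm{x_2}_\varphi$. For property~(2), assume $\norm{x}_\varphi<1$ and pick any $\lambda\in(\norm{x}_\varphi,1)$; by the remark above $\varphi(x/\lambda)\leq1$, so convexity and $\varphi(0)=0$ give $\varphi(x)\leq\lambda\,\varphi(x/\lambda)\leq\lambda$, and letting $\lambda\downarrow\norm{x}_\varphi$ proves $\varphi(x)\leq\norm{x}_\varphi$; the degenerate case $\norm{x}_\varphi=0$ is handled separately, since then $\varphi(\alpha x)\leq1$ for all $\alpha>0$ and the monotonicity of $t\mapsto\varphi(tx)/t$ yields $\varphi(x)\leq\varphi(\alpha x)/\alpha\leq1/\alpha\to0$. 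For property~(3), assume $\norm{x}_\varphi>1$ and pick any $\mu\in(1,\norm{x}_\varphi)$; then $\varphi(x/\mu)>1$, while the monotonicity of $t\mapsto\varphi(tx)/t$ applied at $1/\mu<1$ gives $\varphi(x)\geq\mu\,\varphi(x/\mu)>\mu$, and letting $\mu\uparrow\norm{x}_\varphi$ proves $\varphi(x)\geq\norm{x}_\varphi$.

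Finally, properties~(4) and~(5) I would deduce directly from~(2), setting $y_n:=x_n-x$ (respectively $y_{n,k}:=x_n-x_k$) and using positive homogeneity of the Luxemburg norm. For the forward implication, if $\norm{y_n}_\varphi\to0$ then for each fixed $\alpha>0$ one has $\norm{\alpha y_n}_\varphi=\alpha\norm{y_n}_\varphi<1$ for $n$ large, whence~(2) yields $\varphi(\alpha y_n)\leq\alpha\norm{y_n}_\varphi\to0$. For the converse, fix $\eps\in(0,1)$ and apply the modular hypothesis with $\alpha=1/\eps$: since $\varphi(y_n/\eps)\to0$, eventually $\varphi(y_n/\eps)\leq1$, i.e.\ $\norm{y_n}_\varphi\leq\eps$, and the arbitrariness of $\eps$ gives $\norm{y_n}_\varphi\to0$. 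The Cauchy statement~(5) is obtained verbatim by replacing the single index $n$ with the pair $n,k$ throughout.

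As for difficulties, there is no deep obstacle here, the statements being the standard calibration between modular and Luxemburg-norm convergence; the only point requiring care is the behaviour of the defining infimum at its left endpoint, that is, whether $\varphi(x/\norm{x}_\varphi)\leq1$ actually holds. This is exactly where lower semicontinuity of $\varphi$ would enter, but I would deliberately sidestep it by always working with parameters strictly on one side of $\norm{x}_\varphi$ and then passing to the limit, so that the delicate boundary case never needs to be resolved.
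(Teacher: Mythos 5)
Your proof is correct. Note that the paper itself does not prove this lemma at all: it simply cites Musielak \cite[Thms.~1.5--1.6, Lem.~2.4]{mus}, so there is no in-paper argument to compare against. Your derivation is the standard one found in that reference --- everything reduced to the two monotonicity facts for $t\mapsto\varphi(tx)$ and $t\mapsto\varphi(tx)/t$ coming from convexity and $\varphi(0)=0$ --- and all the steps check out, including the careful avoidance of the boundary case $\lambda=\norm{x}_\varphi$ and the separate treatment of $\norm{x}_\varphi=0$ in~(2).
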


Since we are interested in applications to evolutionary PDEs in modular spaces, 
we look now for sufficient conditions on $\varphi$ ensuring that 
$(E_\varphi, \norm{\cdot}_\varphi)$ and $(L_\varphi, \norm{\cdot}_\varphi)$ are actually
Banach spaces. In the direction, we have the following result.
\begin{prop}
  \label{prop:banach}
  Suppose that there exists a strictly increasing function 
  $\rho:[0,+\infty)\to[0,+\infty)$ with  $\rho(0)=0$ such that 
  \beq\label{ip_varphi}
  \varphi(x)\geq \rho(\norm{x}_X) \qquad\forall\,x\in X\,.
  \eeq
  Then, $(E_\varphi, \norm{\cdot}_\varphi)$ and $(L_\varphi, \norm{\cdot}_\varphi)$ are
  Banach spaces, and the following inclusions are continuous:
  \[
  E_\varphi \embed L_\varphi \embed X\,.
  \]
\end{prop}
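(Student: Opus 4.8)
The plan is to establish the two continuous inclusions first, and then to deduce completeness from the completeness of $X$ together with the lower semicontinuity of $\varphi$ and the characterisations collected in Lemma~\ref{lem:prop}. The inclusion $E_\varphi \embed L_\varphi$ is immediate: by \eqref{L_phi}--\eqref{E_phi} one has $E_\varphi \subset L_\varphi$ as sets, and both spaces carry the very same Luxemburg norm $\norm{\cdot}_\varphi$ from \eqref{lux}, so the inclusion is an isometry and in particular continuous.

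For the inclusion $L_\varphi \embed X$ I would argue as follows. Fix $x \in L_\varphi \setminus \{0\}$. Since $\varphi$ is convex with $\varphi(0)=0$, the map $\lambda \mapsto \varphi(x/\lambda)$ is non-increasing on $(0,+\infty)$, so the definition \eqref{lux} of $\norm{x}_\varphi$ gives $\varphi(x/\lambda) \le 1$ for every $\lambda > \norm{x}_\varphi$. The coercivity assumption \eqref{ip_varphi} then yields $\rho(\norm{x}_X/\lambda) \le \varphi(x/\lambda) \le 1$. Since $\rho$ is strictly increasing with $\rho(0)=0$, setting $c := \sup\{t \ge 0 : \rho(t) \le 1\}$ we deduce $\norm{x}_X/\lambda \le c$, that is $\norm{x}_X \le c\,\lambda$; letting $\lambda \downarrow \norm{x}_\varphi$ gives $\norm{x}_X \le c\,\norm{x}_\varphi$, which is the desired continuous embedding.

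For completeness, let $(x_n)_n$ be a Cauchy sequence in $(L_\varphi, \norm{\cdot}_\varphi)$. By the embedding just proved, $(x_n)_n$ is Cauchy in $X$, hence converges to some $x \in X$ because $X$ is complete. It remains to show that $x \in L_\varphi$ and $\norm{x_n - x}_\varphi \to 0$. By Lemma~\ref{lem:prop}(5) we have $\varphi(\alpha(x_n - x_k)) \to 0$ as $n,k \to \infty$, for every $\alpha > 0$. Fixing $n$ and $\alpha$ and letting $k \to \infty$, the strong convergence $x_k \to x$ in $X$ together with the lower semicontinuity of $\varphi$ gives $\varphi(\alpha(x_n - x)) \le \liminf_{k\to\infty} \varphi(\alpha(x_n - x_k))$; hence $\varphi(\alpha(x_n - x)) \to 0$ as $n \to \infty$, for every $\alpha > 0$. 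Choosing $\alpha = 1$ and $n$ large shows $x_n - x \in L_\varphi$, so that $x = x_n - (x_n - x) \in L_\varphi$, and then Lemma~\ref{lem:prop}(4) upgrades the convergence to $\norm{x_n - x}_\varphi \to 0$. This proves that $L_\varphi$ is a Banach space.

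For $E_\varphi$ I would repeat the same argument, the only extra point being to verify that the limit $x$ lies in $E_\varphi$, i.e.\ that $\varphi(\beta x) < +\infty$ for every $\beta > 0$. This follows from convexity and symmetry of $\varphi$: for fixed $\beta$ one writes $\varphi(\beta x) \le \tfrac12 \varphi(2\beta x_n) + \tfrac12 \varphi(2\beta(x_n - x))$, where the first term is finite since $x_n \in E_\varphi$, and the second is finite for $n$ large by the convergence $\varphi(2\beta(x_n-x)) \to 0$ established above. The main obstacle — and the reason completeness cannot be reduced to a soft closedness statement — is precisely this interplay of two topologies: one must produce the candidate limit in the ambient space $X$, where completeness is available, and then recover both membership in the modular space and convergence in the Luxemburg norm, for which the lower semicontinuity of $\varphi$ and the modular characterisations of Lemma~\ref{lem:prop} are indispensable. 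A minor subtlety worth flagging is the finiteness of the constant $c$ in the embedding $L_\varphi \embed X$, which requires $\rho$ to exceed the value $1$ somewhere; this is ensured in all situations of interest.
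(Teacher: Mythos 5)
Your proof is correct and follows essentially the same route as the paper's: the coercivity \eqref{ip_varphi} transfers Cauchyness from the Luxemburg norm to $X$, and the lower semicontinuity of $\varphi$ together with Lemma~\ref{lem:prop}\,(4)--(5) upgrades the $X$-limit to membership and convergence in the modular spaces. The only (harmless) divergences are that you deduce $x\in L_\varphi$ from the eventual finiteness of $\varphi(x_n-x)$ plus linearity of $L_\varphi$, where the paper instead normalises $x_n/\norm{x_n}_\varphi$ and passes to the limit --- your version is if anything simpler --- and the finiteness caveat on $c=\sup\{t\ge 0:\,\rho(t)\le 1\}$ that you flag is equally implicit in the paper's use of the generalised inverse $\rho^{-1}(1)$.
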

\begin{proof}
  {\sc Step 1}.
  Firstly, we prove that $(E_\varphi, \norm{\cdot}_\varphi)$ is a Banach space.
  Let $(x_n)_n\subset E_\varphi$ be a Cauchy sequence: then, there exists 
  an index $\bar m\in\enne$ such that, for every $n$, $k\geq\bar m$ we have
  $\norm{x_n-x_k}_\varphi<1$. Consequently, by Lemma~\ref{lem:prop} (2)
  and the assumption \eqref{ip_varphi} 
  we have
  \[
  \rho(\norm{x_n-x_k}_X)
  \leq \varphi(x_n-x_k)\leq \norm{x_n-x_k}_\varphi \qquad\forall\,n,k\geq\bar m\,.
  \]
  In particular, it follows that $(x_n)_n$ is a Cauchy sequence in $X$. By completeness of $X$,
  there exists $x\in X$ such that $x_n\to x$ in $X$, hence also $\alpha x_n\to \alpha x$
  in $X$ for all $\alpha>0$. 
  Let now $\alpha>0$ be arbitrary: since $(x_n)_n$ is Cauchy in $E_\varphi\subset L_\varphi$, 
  by Lemma~\ref{lem:prop} (5) we have that
  \[
  \lim_{n,k\to\infty}\varphi(\alpha(x_n-x_k))=0\,,
  \]
  so that there exists $\bar m_\alpha\in\enne$ such that 
  \[
  \varphi(\alpha(x_n-x_k))\leq 1 \qquad\forall\,n,k\geq\bar m_\alpha\,.
  \]
  By lower semicontinuity and convexity of $\varphi$ in $X$,
  since $x_{\bar m_\alpha}\in E_\varphi$ we deduce that
  \begin{align*}
  \varphi\left(\frac\alpha2 x\right)&\leq\liminf_{n\to\infty}\varphi\left(\frac\alpha2 x_n\right)\\
  & = \liminf_{n\to\infty}\varphi\left(\frac12\alpha(x_n-x_{\bar m_\alpha}) 
  + \frac12\alpha x_{\bar m_\alpha}\right)\\
  &\leq \frac12\liminf_{n\to\infty}\varphi(\alpha(x_n-x_{\bar m_\alpha})) 
  + \frac12\varphi(\alpha x_{\bar m_\alpha}) \\
  &\leq 1 + \frac12\varphi(\alpha x_{\bar m_\alpha})<+\infty\,.
  \end{align*}
  Hence, $\varphi(\frac\alpha2 x)<+\infty$ for all $\alpha>0$, from which $x\in E_\varphi$.
  Moreover, again by lower semicontinuity of $\varphi$, for every $\alpha>0$ we have
  \[
  \varphi(\alpha(x_n-x))\leq\liminf_{k\to\infty}\varphi(\alpha(x_n-x_k))\,, \qquad\forall\,n\in\enne\,,
  \]
  which yields
  \[
  \limsup_{n\to\infty}\varphi(\alpha(x_n-x))\leq\limsup_{n,k\to\infty}\varphi(\alpha(x_n-x_k))=0\,.
  \]
  Hence $x_n\to x$ in $E_\varphi$ by Lemma~\ref{lem:prop} (4). This shows that 
  $(E_\varphi, \norm{\cdot}_\varphi)$ is a Banach space.
  \smallskip\\ 
  {\sc Step 2}. We prove now that also $(L_\varphi, \norm{\cdot}_\varphi)$ is a Banach space.
  Let $(x_n)_n\subset L_\varphi$ be a Cauchy sequence. Arguing exactly as in
  {\sc Step 1} we deduce that there exists $x\in X$ such that $x_n\to x$ in~$X$:
  we have to show that $x\in L_\varphi$ and $x_n\to x$ in~$L_\varphi$.
  To this end, note that since $(x_n)_n$ is Cauchy in~$L_\varphi$, 
  by the triangular inequality the real sequence $(\norm{x_n}_\varphi)_n$
  is Cauchy in $\erre$, so that there exists $\lambda\geq0$ such that 
  $\lambda_n:=\norm{x_n}_\varphi\to\lambda$ as $n\to\infty$. 
  Now, if $\lambda=0$, then $x=0\in L_\varphi$ and $x_n\to0$ in~$L_\varphi$,
  so the conclusion is trivial.
  Let us suppose that $\lambda>0$: then
  $\lambda_n>\lambda/2$ for every $n$ sufficiently large. It follows that
  \[
  \limsup_{n\to\infty}\norm{\frac{x_n}{\lambda_n} - \frac x{\lambda}}_X
  \leq\frac2{\lambda}\limsup_{n\to\infty}\norm{x_n-x}_X + 
  \norm{x}_X\limsup_{n\to\infty}\left|\frac1{\lambda_n} - \frac1{\lambda}\right| =0\,,
  \]
  hence $x_n/\lambda_n \to x/\lambda$ in $X$. By lower semicontinuity of $\varphi$
  and definition of $\lambda_n$
  we have then
  \[
  \varphi(x/\lambda)\leq\liminf_{n\to\infty}\varphi(x_n/\lambda_n)\leq 1\,,
  \]
  which implies that $x\in L_\varphi$ and $\norm{x}_\varphi\leq \lambda$.
  Finally, let $\alpha>0$ be arbitrary: then, 
  again by lower semicontinuity of $\varphi$ we have
  \[
  \varphi(\alpha(x_n-x))\leq\liminf_{k\to\infty}\varphi(\alpha(x_n-x_k))\,, \qquad\forall\,n\in\enne\,,
  \]
  which yields
  \[
  \limsup_{n\to\infty}\varphi(\alpha(x_n-x))\leq\limsup_{n,k\to\infty}\varphi(\alpha(x_n-x_k))=0\,.
  \]
  Since $\alpha>0$ is arbitrary, we have
  $x_n\to x$ in $L_\varphi$ by Lemma~\ref{lem:prop} (4). This shows that 
  $(L_\varphi, \norm{\cdot}_\varphi)$ is a Banach space.
  \smallskip\\
  {\sc Step 3}. We prove here the continuous inclusions $E_\varphi\embed L_\varphi\embed X$.
  We already know that $E_\varphi\subset L_\varphi\subset X$ as inclusions of sets.
  Moreover, the continuous inclusion $E_\varphi\embed L_\varphi$ is trivial since
  \[
  \norm{x}_{E_\varphi}=\norm{x}_{L_\varphi}=\norm{x}_\varphi \qquad\forall\,x\in E_\varphi\,.
  \]
  Let now $x\in L_\varphi$ and $\lambda>0$ such that $\varphi(x/\lambda)\leq1$. Then
  by assumption on $\varphi$ we have
  \[
  \rho(\norm{x/\lambda}_X)\leq \varphi(x/\lambda)\leq 1\,,
  \]
  from which $\norm{x}_X\leq\rho^{-1}(1)\lambda$,
  where $\rho^{-1}$ denotes the generalised inverse of $\rho$.
   By arbitrariness of $\lambda$ and 
  definition of $\norm{\cdot}_\varphi$, we have then
  \[
  \norm{x}_X\leq\rho^{-1}(1)
  \norm{x}_\varphi \qquad\forall\,x\in L_\varphi\,,
  \]
  as required.
\end{proof}

Proposition~\ref{prop:banach} ensures then that for
a lower semicontinuous convex semi-modular $\varphi$ satisfying 
condition \eqref{ip_varphi}, the respective 
modular spaces $(E_\varphi, \norm{\cdot}_\varphi)$ and 
$(L_\varphi, \norm{\cdot}_\varphi)$ are actually
Banach spaces. The next main issue 
in studying the variational structure of $E_\varphi$ and $L_\varphi$
concerns density properties.
In this direction, although 
the inclusion $E_\varphi\embed X$ and $L_\varphi\embed X$
may be dense
in the majority of applications,
let us stress that the inclusion $E_\varphi\embed L_\varphi$
is not necessarily dense in general: see Section~\ref{sec:applications} for details.
This calls for the introduction of a weaker concept of convergence in $L_\varphi$,
namely the so-called \textit{modular} convergence: see Musielak \cite{mus}.
\begin{defin}[Modular convergence]
	A sequence $(x_n)_n\subset L_\varphi$ 
	is called modular convergent to $x \in L_\varphi$
	if there exist an $\alpha > 0$ such that $\varphi(\alpha(x_n - x)) \to 0$ as $n \to \infty$.
\end{defin}
\noindent Thanks to Lemma~\ref{lem:prop}, it is clear that
modular convergence is weaker than the norm convergence. 
Actually, the former is strictly weaker than the latter, and they
are equivalent if and only if $\varphi(x_n) \to 0$ implies $\varphi(2x_n) \to 0$,
for every sequence $(x_n)_n\subset L_\varphi$.

We conclude the preliminary section with an overview on 
the duality properties of $E_\varphi$ and $L_\varphi$.
These are indeed crucial in order to 
build a suitable variational framework for PDEs in modular spaces.
\begin{defin}
	The convex conjugate $\varphi^* \colon X^* \to [0,\infty]$ 
	of $\varphi \colon X \to [0, \infty]$ is defined as
	$$
	\varphi^*(y) := \sup_{x \in X} \{\ip{y}{x}_{X^*,X} - \varphi(x)\}\,, \qquad y\in X\,.
	$$
\end{defin}

\begin{lem}\label{lem:dual}
  If $\varphi$ is a lower semicontinuous convex semi-modular on $X$
  and
  $L_\varphi\embed X$ densely, then
  $\varphi^*\colon X^*\to[0,+\infty]$ is a lower semicontinuous convex semi-modular on $X^*$.
\end{lem}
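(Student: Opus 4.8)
The plan is to verify the defining axioms of a convex semi-modular for $\varphi^*$ one at a time, isolating the non-degeneracy condition as the single place where the density hypothesis $L_\varphi\embed X$ is actually needed; all the other properties are soft consequences of $\varphi^*$ being a convex conjugate.

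First I would dispose of the properties that hold for any convex conjugate. Convexity of $\varphi^*$, together with its lower semicontinuity (with respect to the norm, indeed even the weak-$*$, topology of $X^*$), is immediate, since by definition $\varphi^*$ is the pointwise supremum of the family of continuous affine functionals $y\mapsto \ip{y}{x}_{X^*,X}-\varphi(x)$, $x\in X$, and a supremum of lower semicontinuous functions is lower semicontinuous. Non-negativity follows by testing the supremum with $x=0$ and using $\varphi(0)=0$, which gives $\varphi^*(y)\geq-\varphi(0)=0$; the same computation combined with $\varphi\geq0$ yields $\varphi^*(0)=\sup_{x\in X}(-\varphi(x))=0$. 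The symmetry $\varphi^*(-y)=\varphi^*(y)$ is obtained from the change of variable $x\mapsto-x$ in the supremum together with the evenness $\varphi(-x)=\varphi(x)$.

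The heart of the matter is the non-degeneracy condition: if $\varphi^*(\alpha y)=0$ for every $\alpha>0$, then $y=0$. I would first rewrite $\varphi^*(\alpha y)=0$, using non-negativity, as the family of inequalities
\[
  \alpha\ip{y}{x}_{X^*,X}\leq\varphi(x)\qquad\text{for all }x\in X\text{ and all }\alpha>0\,.
\]
Now fix $x\in L_\varphi$ and choose $\beta>0$ with $\varphi(\beta x)<+\infty$; applying the inequality to $\beta x$ gives $\alpha\beta\,\ip{y}{x}_{X^*,X}\leq\varphi(\beta x)$ for all $\alpha>0$, and letting $\alpha\to+\infty$ forces $\ip{y}{x}_{X^*,X}\leq0$. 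Since $-x\in L_\varphi$ as well, because $\varphi(\beta(-x))=\varphi(\beta x)<+\infty$, the same argument applied to $-x$ yields $\ip{y}{x}_{X^*,X}\geq0$, whence $\ip{y}{x}_{X^*,X}=0$ for every $x\in L_\varphi$.

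It is exactly here that the density assumption enters: since $L_\varphi$ is dense in $X$ and $y\in X^*$ is continuous, the vanishing of $\ip{y}{\cdot}_{X^*,X}$ on $L_\varphi$ propagates to all of $X$, so that $y=0$, completing the verification. I expect this non-degeneracy step to be the main (indeed the only) obstacle, the subtlety being the interplay between the quantifier \enquote{for all $\alpha>0$} and the possible infiniteness of $\varphi$ off $L_\varphi$; this is precisely what is resolved by passing to the cone generated by $\mathrm{dom}\,\varphi\subset L_\varphi$, on which the inequalities become genuine constraints, and then invoking density to reach the whole space.
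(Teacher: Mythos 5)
Your proof is correct and follows essentially the same route as the paper: the soft properties are handled identically, and for non-degeneracy both arguments test the inequality $\ip{\alpha y}{x}_{X^*,X}\leq\varphi(x)$ against a scaled element of $L_\varphi$ with finite modular, let $\alpha\to+\infty$, use the symmetry of $\varphi$ to get the two-sided bound, and conclude by density of $L_\varphi$ in $X$. The only cosmetic difference is that the paper phrases the key estimate as an instance of the Young inequality, whereas you unwind the supremum in the definition of $\varphi^*$ directly; these are the same computation.
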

\begin{proof}
  We know that $\varphi^*$ is lower semicontinuous, proper, and convex.
  It is also immediate to check using the definition that 
  $\varphi^*(0)=0$ and $\varphi^*(-y)=\varphi^*(y)$
  for all $y\in X^*$. 
  Moreover, let $y\in X^*$ be such that $\varphi^*(\alpha y)=0$
  for every $\alpha>0$. Take now an arbitrary $x\in L_\varphi$
  and choose $\eta>0$ such that $\varphi(\eta x)<+\infty$.
  Then, by the Young inequality and the symmetry of $\varphi$,
  for all $\alpha>0$ we have
  \[
  \pm\eta\alpha\ip{y}{x}_{X^*, X}\leq \varphi(\pm\eta x) + \varphi^*(\alpha y) = \varphi(\eta x)\,,
  \]
  yielding 
  \[
  |\ip{y}{x}_{X^*, X}|\leq\frac{\varphi(\eta x)}{\eta\alpha} \qquad\forall\,\alpha>0\,.
  \]
  Since $\varphi(\eta x)<+\infty$, letting $\alpha\to+\infty$ we have
  \[
  \ip{y}{x}_{X^*, X}=0 \qquad\forall\,x\in L_\varphi\,,
  \]
  from which $y=0$ in $X^*$ be density of $L_\varphi$ in $X$.
  Hence, $\varphi^*$ is a semi-modular on $X^*$.
\end{proof}
\noindent This lemma ensures that $\varphi^*$
is always a (lower semicontinuous convex) semi-modular whenever 
so is $\varphi$ and $L_\varphi$ is dense in $X$.
However, note that 
even if we additionally require that $\varphi$ is a modular,
it is not necessarily true that $\varphi^*$
is a modular as well.
Without additional assumptions on $\varphi$, this is actually false
(as it happens for example for $\varphi(x)=\norm{x}_X$, $x\in X$).

The last results that we present here concern the duality properties of 
the restriction of $\varphi$ to $E_\varphi$. These will be fundamental 
in the paper.
\begin{lem}\label{lem:dual2}
  Let $\varphi$ be a lower semicontinuous convex semi-modular on $X$
  satisfying condition~\eqref{ip_varphi}. Then, the restriction 
  \[
  \bar\varphi\colon E_\varphi\to[0,+\infty)\,, \qquad\bar\varphi:=\varphi_{|E_\varphi}\,,
  \]
  is a lower semicontinuous convex semi-modular on $E_\varphi$, and
  its convex conjugate 
  \[
  \bar\varphi^*\colon E_\varphi^*\to[0,+\infty]\,, \qquad
  \bar\varphi^*(y):=
  \sup_{x \in E_\varphi} \{\ip{y}{x}_{E_\varphi^*, E_\varphi} - \varphi(x)\}\,, \qquad y\in E_\varphi^*\,,
  \]
  is a lower semicontinuous convex semi-modular on $E_\varphi^*$.
\end{lem}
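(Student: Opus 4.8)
The plan is to verify, for each of $\bar\varphi$ and $\bar\varphi^*$ separately, the three defining conditions of a semi-modular, importing convexity and lower semicontinuity from general principles and checking the algebraic conditions directly from those of $\varphi$.

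For the first assertion, I would begin by noting that $\bar\varphi$ is genuinely finite-valued on $E_\varphi$: by the very definition \eqref{E_phi}, every $x\in E_\varphi$ satisfies $\varphi(\alpha x)<+\infty$ for all $\alpha>0$, in particular $\bar\varphi(x)=\varphi(x)<+\infty$. Convexity, the normalisation $\bar\varphi(0)=\varphi(0)=0$, the symmetry $\bar\varphi(-x)=\bar\varphi(x)$, and the implication that $\bar\varphi(\alpha x)=0$ for all $\alpha>0$ forces $x=0$ are all inherited verbatim from $\varphi$. The only point requiring hypothesis \eqref{ip_varphi} is lower semicontinuity of $\bar\varphi$ with respect to $\norm{\cdot}_\varphi$: here I would invoke Proposition~\ref{prop:banach}, which under \eqref{ip_varphi} provides the continuous embedding $E_\varphi\embed X$. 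Thus $x_n\to x$ in $E_\varphi$ implies $x_n\to x$ in $X$, and the lower semicontinuity of $\varphi$ on $X$ gives $\bar\varphi(x)\leq\liminf_n\bar\varphi(x_n)$.

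For the second assertion, lower semicontinuity and convexity of $\bar\varphi^*$ are automatic, being those of the Fenchel conjugate of a function on the Banach space $(E_\varphi,\norm{\cdot}_\varphi)$. Non-negativity and the normalisation follow by testing the defining supremum at $x=0$: one gets $\bar\varphi^*(y)\geq\ip{y}{0}_{E_\varphi^*,E_\varphi}-\varphi(0)=0$, and since $\varphi\geq0$ with $\varphi(0)=0$, also $\bar\varphi^*(0)=\sup_{x\in E_\varphi}(-\varphi(x))=0$. The symmetry $\bar\varphi^*(-y)=\bar\varphi^*(y)$ comes from the change of variables $x\mapsto-x$ in the supremum together with $\varphi(-x)=\varphi(x)$.

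The only nontrivial step—and the main obstacle—is showing that $\bar\varphi^*(\alpha y)=0$ for all $\alpha>0$ forces $y=0$. This mirrors the argument of Lemma~\ref{lem:dual} but is in fact cleaner, since $E_\varphi^*$ is genuinely the dual of $E_\varphi$ and no density is needed. Fixing such a $y$, an arbitrary $x\in E_\varphi$, and any $\eta>0$ (so that $\varphi(\eta x)<+\infty$), the Fenchel--Young inequality for the pair $\bar\varphi,\bar\varphi^*$ together with the symmetry of $\varphi$ yields, for every $\alpha>0$,
\[
  \pm\alpha\eta\ip{y}{x}_{E_\varphi^*,E_\varphi}
  \leq\bar\varphi(\pm\eta x)+\bar\varphi^*(\alpha y)=\varphi(\eta x)\,,
\]
whence $|\ip{y}{x}_{E_\varphi^*,E_\varphi}|\leq\varphi(\eta x)/(\alpha\eta)$ for all $\alpha>0$. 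Letting $\alpha\to+\infty$ gives $\ip{y}{x}_{E_\varphi^*,E_\varphi}=0$, and arbitrariness of $x\in E_\varphi$ forces $y=0$. This completes the verification that $\bar\varphi^*$ is a lower semicontinuous convex semi-modular on $E_\varphi^*$.
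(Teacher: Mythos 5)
Your proof is correct and takes essentially the same approach as the paper: for $\bar\varphi$ the verification is identical, and for $\bar\varphi^*$ the paper simply observes that $E_{\bar\varphi}=L_{\bar\varphi}=E_\varphi$ and invokes Lemma~\ref{lem:dual} with $X=E_\varphi$, which is exactly the Young-inequality argument you inline (correctly noting that the density hypothesis is automatic in this case).
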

\begin{proof}
  It is clear that $\bar\varphi$ is proper, convex and lower semicontinuous,
  since $E_\varphi\embed X$ continuously by Proposition~\ref{prop:banach}
  and $\varphi$ is lower semicontinuous on $X$. It is also trivial 
  that $\bar\varphi(0)=0$ and that $\bar\varphi(-x)=\bar\varphi(x)$
  for all $x\in E_\varphi$. Moreover, if $x\in E_\varphi$
  satisfies $\bar\varphi(\alpha x)=0$ for all $\alpha>0$, then clearly 
  $\varphi(\alpha x)=0$ for all $\alpha>0$, hence $x=0$ since
  $\varphi$ is a semi-modular. This shows that $\bar\varphi$ is 
  a lower semicontinuous convex semi-modular on $E_\varphi$.
  Consequently, since $E_{\bar\varphi}=L_{\bar\varphi}=E_\varphi$
  (so in particular $L_{\bar\varphi}$ is trivially dense in $E_{\varphi}$),
  by Lemma~\ref{lem:dual} with the choice $X=E_\varphi$ we have that 
  $\bar\varphi^*$ is a lower semicontinuous convex semi-modular on $E_\varphi^*$.
\end{proof}

\noindent Lemma~\ref{lem:dual2} ensures then that the modular spaces 
\begin{align*}
	L_{\bar\varphi^*} := \{y \in E_{\varphi}^* :\; 
	\exists \,\alpha >0 :\quad \bar\varphi^*(\alpha y) < +\infty\}\,,\\
	E_{\bar\varphi^*} := \{y \in E_\varphi^* :\; 
	\forall\,\alpha >0 :\quad \bar\varphi^*(\alpha y) < +\infty\}\,,
 \end{align*}
endowed with the norm
\[
  \norm{y}_{\bar\varphi^*}:=\inf\left\{\lambda>0:\quad\bar\varphi^*(y/\lambda)\leq1\right\}\,,
  \qquad y\in L_{\bar\varphi^*}\,,
\]
are well-defined normed spaces and satisfy $E_{\bar\varphi^*}\subset
L_{\bar\varphi^*}\subset E_{\varphi}^*$.
The following result gives a further characterization in
terms of completeness.
\begin{prop}\label{prop:dual3}
  Let $\varphi$ be a lower semicontinuous convex semi-modular on $X$
  satisfying condition~\eqref{ip_varphi}. Then, 
  it holds that
  \beq\label{ineq:norms}
  \norm{y}_{\bar\varphi^*}\leq \norm{y}_{E_{\varphi}^*}\leq 2\norm{y}_{\bar\varphi^*}
  \qquad\forall\,y\in L_{\bar\varphi^*}\,.
  \eeq
  In particular, the modular spaces $(L_{\bar\varphi^*}, \norm{\cdot}_{\bar\varphi^*})$
  and $(E_{\bar\varphi^*}, \norm{\cdot}_{\bar\varphi^*})$ are Banach spaces,
  and it holds
  \[
  E_{\bar\varphi^*}\embed L_{\bar\varphi^*}= E_{\varphi}^*\,.
  \]
  Moreover, if $s>1$ in \eqref{ip_varphi} and $E_\varphi\embed X$ densely, 
  then also $X^*\embed E_{\bar\varphi^*}$
  continuously.
\end{prop}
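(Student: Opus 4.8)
The plan is to treat the two-sided norm estimate \eqref{ineq:norms} as the pivot: once it is in hand, the set identity $L_{\bar\varphi^*}=E_\varphi^*$, the two completeness assertions, and the continuity of $E_{\bar\varphi^*}\embed L_{\bar\varphi^*}$ follow cheaply, while the final embedding $X^*\embed E_{\bar\varphi^*}$ is a separate conjugacy computation. First I would establish \eqref{ineq:norms} by a pair of Fenchel--Young arguments. For the lower bound, put $\lambda:=\norm{y}_{E_{\varphi}^*}$ (the case $\lambda=0$ being trivial) and note that $\lambda^{-1}\ip{y}{x}_{E_\varphi^*,E_\varphi}\leq\norm{x}_\varphi$ for every $x\in E_\varphi$, whence $\bar\varphi^*(y/\lambda)\leq\sup_{x\in E_\varphi}\{\norm{x}_\varphi-\varphi(x)\}$; the right-hand side is at most $1$, since Lemma~\ref{lem:prop}(3) makes the expression nonpositive when $\norm{x}_\varphi>1$ and Lemma~\ref{lem:prop}(2) bounds it by $\norm{x}_\varphi\leq1$ otherwise, so $\norm{y}_{\bar\varphi^*}\leq\lambda$. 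For the upper bound, take any $\mu>\norm{y}_{\bar\varphi^*}$, so that $\bar\varphi^*(y/\mu)\leq1$ by convexity of $t\mapsto\bar\varphi^*(ty)$, and apply Young's inequality $\mu^{-1}\ip{y}{x}_{E_\varphi^*,E_\varphi}\leq\bar\varphi^*(y/\mu)+\varphi(x)\leq2$ for all $x\in E_\varphi$ with $\norm{x}_\varphi\leq1$ (using the standard fact $\varphi(x)\leq1$ for $\norm{x}_\varphi\leq1$, itself a consequence of Lemma~\ref{lem:prop}(2) and lower semicontinuity); taking the supremum over such $x$ and letting $\mu\downarrow\norm{y}_{\bar\varphi^*}$ yields $\norm{y}_{E_\varphi^*}\leq2\norm{y}_{\bar\varphi^*}$.

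From \eqref{ineq:norms} I would then read off the set identity and the Banach property of $L_{\bar\varphi^*}$. The lower bound gives $\norm{y}_{\bar\varphi^*}\leq\norm{y}_{E_{\varphi}^*}<+\infty$ for every $y\in E_\varphi^*$, hence $\bar\varphi^*(\alpha y)<+\infty$ for $\alpha$ small and $y\in L_{\bar\varphi^*}$; together with the trivial inclusion $L_{\bar\varphi^*}\subset E_\varphi^*$ this proves $L_{\bar\varphi^*}=E_{\varphi}^*$. Since $(E_\varphi^*,\norm{\cdot}_{E_\varphi^*})$ is complete, being a dual space, and \eqref{ineq:norms} makes $\norm{\cdot}_{\bar\varphi^*}$ an equivalent norm on it, $(L_{\bar\varphi^*},\norm{\cdot}_{\bar\varphi^*})$ is Banach. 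I stress that one cannot simply invoke Proposition~\ref{prop:banach} for $\bar\varphi^*$: this functional is only a \emph{semi}-modular and may vanish on nonzero functionals, so the strictly increasing lower bound required there is unavailable, and routing completeness through the a priori completeness of $E_\varphi^*$ is what circumvents the difficulty. For $E_{\bar\varphi^*}$ I would replay Step~1 of the proof of Proposition~\ref{prop:banach}: a $\norm{\cdot}_{\bar\varphi^*}$-Cauchy sequence in $E_{\bar\varphi^*}$ is $\norm{\cdot}_{E_\varphi^*}$-Cauchy by \eqref{ineq:norms}, hence converges to some $y\in E_\varphi^*$, and lower semicontinuity and convexity of $\bar\varphi^*$ force $\bar\varphi^*(\alpha y)<+\infty$ for all $\alpha>0$ together with modular convergence, so $y\in E_{\bar\varphi^*}$; thus $E_{\bar\varphi^*}$ is a closed subspace, hence Banach, and $E_{\bar\varphi^*}\embed L_{\bar\varphi^*}$ is continuous since both carry the same norm.

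It remains to prove $X^*\embed E_{\bar\varphi^*}$ under the superlinearity encoded by $s>1$ in \eqref{ip_varphi}, which I read as a power-type bound $\rho(t)\geq c_0t^s$. The point is that this makes the conjugate finite with power growth: for $y\in X^*$ one estimates $\varphi^*(y)=\sup_{x\in X}\{\ip{y}{x}_{X^*,X}-\varphi(x)\}\leq\sup_{t\geq0}\{\norm{y}_{X^*}t-c_0t^s\}\leq C\norm{y}_{X^*}^{s'}$ with $s'=s/(s-1)$. Density of $E_\varphi$ in $X$ makes the restriction map $X^*\to E_\varphi^*$ an injective continuous embedding, and because the supremum defining $\bar\varphi^*$ runs over the smaller set $E_\varphi\subset X$ one has $\bar\varphi^*(y_{|E_\varphi})\leq\varphi^*(y)$; therefore $\bar\varphi^*(\alpha\,y_{|E_\varphi})\leq\varphi^*(\alpha y)\leq C\alpha^{s'}\norm{y}_{X^*}^{s'}<+\infty$ for every $\alpha>0$, which places $y_{|E_\varphi}$ in $E_{\bar\varphi^*}$ and not merely in $L_{\bar\varphi^*}$. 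Continuity of this embedding then follows by combining the first inequality in \eqref{ineq:norms} with the continuity of the restriction map.

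I expect the main obstacle to be the first step: the Young estimates themselves are not deep, but the bookkeeping with the Luxemburg norm (the infimum, the passage $\mu\downarrow\norm{y}_{\bar\varphi^*}$, and the boundary case $\norm{\cdot}_\varphi=1$) must be handled carefully to land exactly the constants $1$ and $2$. The genuinely conceptual point is the remark above, namely that the semi-modular degeneracy of $\bar\varphi^*$ blocks a direct appeal to Proposition~\ref{prop:banach}, so completeness has to be inherited from the dual norm via \eqref{ineq:norms}. The final embedding is comparatively routine once the superlinear growth of $\rho$ is exploited to bound $\varphi^*$ everywhere on $X^*$.
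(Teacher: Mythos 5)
Your proposal is correct and follows essentially the same route as the paper: the two-sided estimate \eqref{ineq:norms} via Fenchel--Young together with the facts that $\varphi(x)\leq 1$ when $\norm{x}_\varphi\leq1$ (proved by dilation and lower semicontinuity) and $\norm{x}_\varphi-\varphi(x)\leq0$ when $\norm{x}_\varphi>1$; completeness inherited from the dual norm of $E_\varphi^*$ rather than from Proposition~\ref{prop:banach}; and the conjugate power bound $\varphi^*(y)\leq C\norm{y}_{X^*}^{s/(s-1)}$ combined with $(\bar\varphi^*)_{|X^*}\leq\varphi^*$ for the final embedding. The only cosmetic difference is that you obtain the continuity of $X^*\embed E_{\bar\varphi^*}$ from the left inequality in \eqref{ineq:norms} and the operator norm of the restriction map, whereas the paper computes an explicit Luxemburg-norm bound; both are valid.
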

\begin{proof}
  {\sc Step 1}.
  Let $y\in L_{\bar\varphi^*}$:
  by definition of dual norm and 
  by the Young inequality we have
  \[
  \norm{y}_{E_{\varphi}^*}=\sup\left\{\ip{y}{x}:\;
  x\in E_\varphi\,,\;\norm{x}_{\varphi}\leq1\right\}
  \leq\bar\varphi^*(y) + \sup\left\{\varphi(x):\;
  x\in E_\varphi\,,\;\norm{x}_{\varphi}\leq1\right\}\,.
  \]
  Now, let $x\in E_\varphi$ be such that $\norm{x}_\varphi\leq 1$.
  If $(\delta_k)_k\subset(0,1)$ is such that  $\delta_k\nearrow1$ as $k\to\infty$,
  then we have that $\norm{\delta_k x}_{\varphi}<1$ for every $k$,
  hence also, by Lemma~\ref{lem:prop} (2),
  \[
  \varphi(\delta_k x)\leq \norm{\delta_k x}_\varphi=\delta_k \norm{x}_{\varphi}\leq \delta_k\,.
  \]
  Since $\delta_k x\to x$ in $X$, letting $k\to\infty$ we get, by lower semicontinuity of $\varphi$,
  \[
  \varphi(x)\leq\liminf_{k\to\infty}\varphi(\delta_k x)\leq1
  \qquad\forall\,x\in E_\varphi:\;\norm{x}_\varphi\leq1\,.
  \]
  Putting this information together, we deduce that 
  \[
  \norm{y}_{E_{\varphi}^*} \leq 1 + \bar\varphi^*(y)
  \qquad\forall\,y\in L_{\bar\varphi^*}\,.
  \]
  Now, let $\lambda>0$ be such that $\bar\varphi^*(y/\lambda)\leq1$. The inequality just proved
  implies, by arbitrariness of $y\in L_{\bar\varphi^*}$, that
  \[
  \frac1\lambda\norm{y}_{E_{\varphi}^*}=\norm{y/\lambda}_{E_{\varphi}^*}
  \leq 1+ \bar\varphi^*(y/\lambda)\leq 2\,,
  \]
  from which $\norm{y}_{E_{\varphi}^*}\leq 2\lambda$. By arbitrariness of $\lambda$
  the right-inequality in \eqref{ineq:norms} follows.
  As for the left-inequality, 
  for $y\in E_\varphi^*\setminus\{0\}$ we have
  \begin{align*}
    \bar\varphi^*\left(\frac{y}{\norm{y}_{E_\varphi^*}}\right)=
    \sup_{x\in E_\varphi}
    \left\{\frac{\ip{y}{x}_{E_\varphi^*, E_\varphi}}{\norm{y}_{E_\varphi^*}}
    -\varphi(x)\right\}\leq
    \sup_{x\in E_\varphi}
    \left\{\norm{x}_{\varphi}-\varphi(x)\right\}\,.
  \end{align*}
  Now, if $x\in E_\varphi$ and $\norm{x}_\varphi>1$, from 
  Lemma~\ref{lem:prop} (3) we know that $\varphi(x)\geq\norm{x}_\varphi$, 
  hence also $\norm{x}_{\varphi}-\varphi(x)\leq0$. It follows then that 
  \[
    \bar\varphi^*\left(\frac{y}{\norm{y}_{E_\varphi^*}}\right)\leq
    \sup_{x\in E_\varphi}
    \left\{\norm{x}_{\varphi}-\varphi(x)\right\}\leq
    \sup_{\norm{x}_\varphi\leq1}
    \left\{\norm{x}_{\varphi}-\varphi(x)\right\}\leq1\,.
  \]
  This yields that $y\in L_{\bar\varphi^*}$ and 
  $\norm{y}_{\bar\varphi^*}\leq \norm{y}_{E_\varphi^*}$, as desired.
  Hence, the inequality \eqref{ineq:norms} is proved.
  As a byproduct, this implies also that $L_{\bar\varphi^*}=E_\varphi^*$,
  and that the dual norm on $E_\varphi^*$ is equivalent to the 
  $\norm{\cdot}_{\bar\varphi^*}$-norm.\\
  Let us show now that $E_{\bar\varphi^*}$ and $L_{\bar\varphi^*}$ are Banach spaces.
  To this end, let $(y_n)_n\subset E_{\bar\varphi^*}$ be a Cauchy sequence:
  then, by \eqref{ineq:norms} it follows that it is also Cauchy in $E_\varphi^*$.
  By completeness of $E_\varphi^*$, there is $y\in E_\varphi^*$ such that 
  $y_n\to y$ in $E_{\varphi}^*$.
 Proceeding now as in the proof of Proposition~\ref{prop:banach},
 by the lower semicontinuity of $\bar\varphi^*$ on $E_\varphi^*$
 we deduce that $y\in E_{\bar\varphi^*}$, hence again by 
 \eqref{ineq:norms} that $y_n\to y$ in $E_{\bar\varphi^*}$.
 The case of $L_{\bar\varphi^*}$ is entirely analogous.
 \smallskip\\
 {\sc Step 2}.
 Let us suppose that $s>1$ and $E_\varphi\embed X$ densely,
 and show that $X^*\embed E_{\bar\varphi^*}$.
 First of all, note that we can identify $X^*$ with a closed subspace of $E_\varphi^*$:
 indeed, denoting by $i\colon E_\varphi\to X$ the inclusion, it is
 easily seen that since $E_\varphi\embed X$ densely
 the adjoint operator $i^*\colon X^*\to E_\varphi^*$ is linear, continuous, and injective.
 Hence, we can identify $X^*\cong i^*(X^*)\embed E_\varphi^*$, getting
 \[
 E_\varphi\embed L_\varphi\embed X\,, \qquad
 X^*\embed E_{\varphi}^*\,.
 \]
 Secondly, let us show that $(\bar\varphi^*)_{|X^*}\leq\varphi^*$.
 Indeed, for every $y\in X^*$ we have
  \begin{align*}
  \bar\varphi^*(y)&=\sup_{x\in E_\varphi}\{\ip{y}{x}_{E_\varphi^*, E_\varphi}-\varphi(x)\} \\
  &=\sup_{x\in E_\varphi}\{\ip{y}{x}_{X^*,X}-\varphi(x)\}\\
  &\leq
  \sup_{x\in X}\{\ip{y}{x}_{X^*,X}-\varphi(x)\}\\
  &=\varphi^*(y)\,.
  \end{align*}
  Finally, we are now ready to conclude.
  Indeed, recalling that $s>1$,
  taking conjugates in \eqref{ip_varphi} yields, after
  a standard computation,
  \[
  \varphi^*(y)\leq \frac{s-1}{s}(cs)^{-\frac1{s-1}}
  \norm{y}_{X^*}^{\frac s{s-1}} \qquad\forall\,y\in X^*\,,
  \]
  which implies that actually $\varphi^*\colon X^*\to[0,+\infty)$ 
  is everywhere defined on $X^*$, and also that
  $\varphi^*(\alpha y)<+\infty$ for every $y\in X^*$ and $\alpha>0$. 
  Since $(\bar\varphi^*)_{|X^*}\leq\varphi^*$,
  this shows that $X^*\subset E_{\bar\varphi^*}$. Moreover, 
  for every arbitrary $y\in X^*$ and for any $\lambda=\lambda(y)>0$ such that 
  \[
  \lambda\geq \left(\frac{s-1}{s}(cs)^{-\frac1{s-1}}\right)^{\frac {s-1}{s}}
  \norm{y}_{X^*}\,,
  \]
  it clearly holds that 
  \[
  \bar\varphi^*(y/\lambda)\leq
  \varphi^*(y/\lambda)\leq
  \frac{s-1}{s}(cs)^{-\frac1{s-1}}
  \norm{y}_{X^*}^{\frac s{s-1}}\lambda^{-\frac s{s-1}}\leq 1\,.
  \]
  Hence, by definition of $\norm{\cdot}_{\bar\varphi^*}$ we have then that
  \[
  \norm{y}_{\bar\varphi^*}\leq
  \left(\frac{s-1}{s}(cs)^{-\frac1{s-1}}\right)^{\frac {s-1}{s}}
  \norm{y}_{X^*} \qquad\forall\,y\in X^*\,,
  \]
  so that the inclusion $X^*\embed E_{\bar\varphi^*}$ is continuous, as required.
 \end{proof}


\section{Extended variational setting and main result}
\label{sec:general_setting}
In this section, we fix the assumptions and introduce 
the extended variational setting that will be used in the paper.
After this, we present the main well posedness result.

\subsection{Assumptions}
Throughout the paper, we will work in the following framework.

\begin{description}
  \item[H0] $H$ is a real separable Hilbert space,
  $\varphi \colon H \to [0,\infty]$ is a lower semicontinuous convex semi-modular on $H$,
  and there exist constants $c>0$ and $s>1$ such that
  \[
  \varphi(x)\geq c\norm{x}_H^s \qquad\forall\,x\in H\,.
  \]
\end{description}
  By Proposition~\ref{prop:banach} applied with 
  $\rho(z)=c z^s$, $z\geq0$, this implies that the modular 
  spaces $(E_\varphi, \norm{\cdot}_\varphi)$ and $(L_\varphi, \norm{\cdot}_\varphi)$ 
  are well-defined Banach spaces with continuous inclusions
  $E_\varphi\embed L_\varphi\embed H$.
  From now on, $H$ is identified to its dual space $H^*$ by the Riesz isomorphism,
  and norm and scalar product in $H$ are denoted by $\norm{\cdot}_H$
  and $(\cdot,\cdot)$, respectively. The convex conjugate of $\varphi$
  is defined as
  \[
  \varphi^*\colon H\to[0,+\infty]\,, \qquad
  \varphi^*(y):=\sup_{x\in H}\{(y,x)-\varphi(x)\}\,, \quad x\in H\,.
  \]
  
\begin{description}
  \item[H1] $E_\varphi$ is dense in $H$, and there exists a separable 
  reflexive Banach space $V_0\embed E_\varphi$ 
  continuously and densely, such that $\varphi$
  is bounded on bounded subsets of $V_0$.
\end{description}
  The existence of such $V_0$ is an assumption of technical nature, and 
  can be seen a separability-type requirement for $E_\varphi$.
  This is satisfied in the majority of applications (see Section~\ref{sec:applications}).
  The density of~$E_\varphi$ 
  ensures first that we can identify $H$ with a closed subspace of $E_\varphi^*$.
  More
  specifically, denoting by $i\colon E_\varphi\to H$ the (continuous) inclusion and recalling that 
  $H\cong H^*$, we have that the adjoint operator $i^*\colon H\to E_\varphi^*$
  is linear, continuous, and injective: indeed, linearity and continuity 
  follow trivially by the continuous inclusion $E_\varphi\embed H$, while 
  the injectivity is an immediate consequence of the density. Hence,
  one can identify $H$ with the closed subspace $i^*(H)\subset E_\varphi^*$.
  We have then the following continuous inclusions
  \[
  E_\varphi\embed L_\varphi\embed H\embed E_\varphi^*\,.
  \]
  We introduce the restricted semi-modular 
  \[
  \bar{\varphi}\colon E_\varphi\to[0,+\infty)\,, \qquad
  \bar\varphi:=\varphi_{|E_{\varphi}}
  \]
  and its convex conjugate
  \[
  \bar{\varphi}^{*}\colon E_{\varphi}^* \to [0,\infty]\,,\qquad
  \bar{\varphi}^{*}(y):=\sup_{x\in E_\varphi}\{\ip{y}{x}_{E_\varphi^*, E_\varphi}-\varphi(x)\}\,, 
  \quad x\in E_\varphi\,.
  \]
  By Lemma~\ref{lem:dual2} and Proposition~\ref{prop:dual3}, we know that 
  $\bar\varphi^*$ is a lower semicontinuous convex semi-modular on $E_\varphi^*$,
  with $(\bar\varphi^*)_{|H}=\varphi^*$,
  and that the modular spaces $(E_{\bar\varphi^*}, \norm{\cdot}_{\bar\varphi^*})$
  and $(L_{\bar\varphi^*}, \norm{\cdot}_{\bar\varphi^*})$ are well-defined Banach spaces, 
  with continuous inclusions 
  \[
  H\embed  E_{\bar\varphi^*}\embed L_{\bar\varphi^*}= E_\varphi^*\,.
  \]
  
  \begin{description}
  \item[H2] either one of the following conditions holds:
  \begin{description}
    \item[H2i] $E_\varphi\embed L_\varphi$ densely, or
    \item[H2ii] $H\embed L_{\bar\varphi^*}$ densely.
  \end{description}
  \end{description}
The main consequence of this assumption is that  it allows to 
properly define an extended concept of duality between the spaces 
$L_\varphi$ and $L_{\bar\varphi^*}$. Specifically, 
by {\bf H0--H1} we have the continuous inclusions
\[
V_0\embed E_\varphi\embed L_\varphi\embed H
\embed  E_{\bar\varphi^*}\embed L_{\bar\varphi^*}= E_\varphi^*\embed V_0^*\,.
\]
However, a priori it is not true that there exists a duality pairing between
$L_\varphi$ and $L_{\bar\varphi^*}$ generalizing the scalar product of $H$.
This is because $L_\varphi$ may be strictly bigger than $E_\varphi$,
and $L_{\bar\varphi^*}$ may be strictly bigger than $H$ (see Secction~\ref{sec:applications}).
Assumption {\bf H2} is fundamental as it allows to extend
the scalar product of $H$ to a duality between $L_\varphi$ and $L_{\bar\varphi^*}$.
Due to the importance of this result, we collect it in the following lemma.
\begin{lem}
  \label{lem:duality}
  Assume {\bf H0--H2}. Then, there exists a unique continuous bilinear form
  \[
  [\cdot,\cdot]\colon L_{\bar\varphi^*}\times L_\varphi\to\erre\,,
  \] 
  extending the scalar product of $H$, 
  in the sense that $[y,\cdot]\colon L_\varphi\to\erre$ and $[\cdot,x]\colon L_{\bar\varphi^*}\to\erre$
  are linear and continuous for every $y\in L_{\bar\varphi^*}$ and $x\in L_\varphi$,
  respectively, and such that 
  \begin{align*}
  [y,x]=(y,x) \qquad&\forall\,x\in L_\varphi\,,\;\forall\,y\in H\,,\\
  [y,x]=\ip{y}{x}_{E_\varphi^*,E_\varphi} \qquad&\forall\,x\in E_\varphi\,,\;\forall\,y\in L_{\bar\varphi^*}\,.
  \end{align*}
  Furthermore, the following generalized H\"older inequality holds:
  \beq\label{ineq:holder}
  [y,x]\leq2\norm{y}_{\bar\varphi^*}\norm{x}_\varphi 
  \qquad\forall\,(x,y)\in L_\varphi\times L_{\bar\varphi^*}\,.
  \eeq
\end{lem}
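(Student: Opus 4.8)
The plan is to build $[\cdot,\cdot]$ by extending two partial pairings that are already at our disposal. Since $L_{\bar\varphi^*}=E_\varphi^*$, for $y\in L_{\bar\varphi^*}$ and $x\in E_\varphi$ the duality $\ip{y}{x}_{E_\varphi^*,E_\varphi}$ is defined; and since $L_\varphi\subset H$, for $y\in H$ and $x\in L_\varphi$ the scalar product $(y,x)$ is defined. These two partial pairings agree on the overlap $H\times E_\varphi$, because the identification of $H$ inside $E_\varphi^*$ is performed through the adjoint $i^*$ of the inclusion $i\colon E_\varphi\embed H$, so that $\ip{y}{x}_{E_\varphi^*,E_\varphi}=(y,x)$ for $y\in H$, $x\in E_\varphi$. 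The idea is then to extend one of the two by density, according to whether \textbf{H2i} or \textbf{H2ii} holds.

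Before extending, I would record the two a priori bounds that make the extension continuous. First, for $y\in L_{\bar\varphi^*}=E_\varphi^*$ and $x\in E_\varphi$, combining $\ip{y}{x}_{E_\varphi^*,E_\varphi}\leq\norm{y}_{E_\varphi^*}\norm{x}_\varphi$ with the norm equivalence $\norm{y}_{E_\varphi^*}\leq2\norm{y}_{\bar\varphi^*}$ from \eqref{ineq:norms} yields $\ip{y}{x}_{E_\varphi^*,E_\varphi}\leq2\norm{y}_{\bar\varphi^*}\norm{x}_\varphi$. Second --- and this is the only genuine computation --- for $y\in H$ and $x\in L_\varphi$ I would prove the modular H\"older inequality $(y,x)\leq2\norm{y}_{\bar\varphi^*}\norm{x}_\varphi$: assuming $\norm{x}_\varphi,\norm{y}_{\bar\varphi^*}>0$ (the degenerate cases being trivial), normalising to $\tilde x:=x/\norm{x}_\varphi$ and $\tilde y:=y/\norm{y}_{\bar\varphi^*}$ gives $\varphi(\tilde x)\leq1$ and, since $(\bar\varphi^*)_{|H}=\varphi^*$, also $\varphi^*(\tilde y)\leq1$; the Young inequality $(\tilde y,\tilde x)\leq\varphi(\tilde x)+\varphi^*(\tilde y)\leq2$ then rescales to the claim. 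Here $\varphi(\tilde x)\leq1$ whenever $\norm{\tilde x}_\varphi\leq1$ follows from Lemma~\ref{lem:prop}(2) together with the lower semicontinuity argument already used in the proof of Proposition~\ref{prop:dual3}.

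With these bounds in hand the extension is routine. Under \textbf{H2i}, for each fixed $y\in L_{\bar\varphi^*}$ the map $x\mapsto\ip{y}{x}_{E_\varphi^*,E_\varphi}$ is linear and bounded on the dense subspace $E_\varphi$ of $L_\varphi$, hence extends uniquely to a bounded linear functional $[y,\cdot]$ on $L_\varphi$ preserving the bound $2\norm{y}_{\bar\varphi^*}$; under \textbf{H2ii} I would symmetrically extend $y\mapsto(y,x)$ from the dense subspace $H$ of $L_{\bar\varphi^*}$ to obtain $[\cdot,x]$. This immediately gives \eqref{ineq:holder}. The two consistency identities follow from uniqueness of the continuous extension: under \textbf{H2i}, for $y\in H$ the functional $(y,\cdot)$ is already continuous on $L_\varphi$ and agrees with $\ip{y}{\cdot}$ on $E_\varphi$, so it must coincide with $[y,\cdot]$, while $[y,x]=\ip{y}{x}$ for $x\in E_\varphi$ holds by construction (the case \textbf{H2ii} being symmetric).

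The step I expect to require the most care is not the extension itself but checking that the resulting form is bilinear and continuous in \emph{both} variables, since in each case we only extended in one of them. I would handle this via the uniform bound plus a density passage: e.g.\ under \textbf{H2i}, linearity of $[\cdot,x]$ in $y$ holds on $E_\varphi$, and for general $x\in L_\varphi$ one approximates $x$ by $x_n\in E_\varphi$ and passes to the limit using the continuity of each $[y,\cdot]$; the estimate $\abs{[y,x]}\leq2\norm{y}_{\bar\varphi^*}\norm{x}_\varphi$ then yields continuity of $[\cdot,x]$. Uniqueness of the whole form is argued in the same spirit: any continuous bilinear form satisfying the two consistency properties agrees with the constructed one on a dense subspace in the appropriate variable, whence everywhere.
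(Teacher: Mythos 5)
Your proposal is correct and follows essentially the same route as the paper: the generalized H\"older bound obtained from the Young inequality after Luxemburg normalisation is exactly the estimate the paper uses to show that $[y,x_n]$ (resp.\ $[y_n,x]$) is Cauchy, and the extension by density under \textbf{H2i}/\textbf{H2ii} together with the density-based uniqueness argument matches the paper's Cases 1 and 2. Packaging the estimate as a standalone inequality and then invoking the bounded-linear-extension principle is only a cosmetic reorganisation of the same argument.
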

\begin{proof}
{\sc Case 1}. We prove here the lemma in the case of {\bf H2i}, 
i.e.~if $E_\varphi\embed L_\varphi$ densely.
Let $x\in L_{\varphi}$ and $y\in L_{\bar{\varphi}^*}= E_{\varphi}^{*}$.
If either $x=0$ or $y=0$, then we set $[y,x]:=0$. 
Let us suppose that $x\neq0$ and $y\neq0$.
Take a sequence $(x_n)_{n\in \mathbb{N}} \subset E_{\varphi}$ 
such that $x_n \to x$ in $L_{\varphi}$, and define
$$
	[y, x_n]:= \ip{y}{x_n}_{E_{\varphi}^*, E_\varphi}\,, \qquad n\in\enne\,.
$$ 
Now, for every $n$, $k\in\enne$, by the Young inequality we have
\begin{align*}
	\left|
	\ip{\frac{y}{\|y\|_{{\bar{\varphi}^*}}}}
	{\frac{x_n-x_k}{\|x_n-x_k\|_{{\varphi}}}}_{E_\varphi^*,E_{\varphi}} \right| 
	& \leq \bar{\varphi}^*\left(\frac{y}{\|y\|_{{\bar{\varphi}^*}}} \right) 
	+ \bar{\varphi}\left(\frac{x_n-x_k}{\|x_n-x_k\|_{{\varphi}}}\right) \\
	& = \bar{\varphi}^*\left(\frac{y}{\|y\|_{{\bar{\varphi}^*}}} \right) + 
	\varphi\left(\frac{x_n-x_k}{\|x_n-x_k\|_{{\varphi}}}\right) \\
	& \leq 2\,,
\end{align*}
yielding 
\begin{equation*}
	|[y, x_n-x_k]| \leq 2 \|y\|_{\bar{\varphi}^*} \|x_n-x_k\|_{\varphi} \to 0\,.
\end{equation*}
Hence, 
$([y, x_n]_{\varphi})_n$ is
a Cauchy sequence in $\erre$, and 
we can define 
$$[y, x]:= \lim_{n\to\infty}[y, x_n]\,.$$
A similar argument shows that this definition is independent of the choice 
of the approximating sequence $(x_n)_{n\in\mathbb{N}}$. Moreover, it is straightforward to check that 
$[\cdot,\cdot]$ is a continuous bilinear form extending the scalar product of $H$.
It is also the only continuous bilinear form on
$L_{\bar\varphi^*}\times L_\varphi$
doing so: indeed, if $[\cdot,\cdot]_1$
and $[\cdot,\cdot]_2$ are continuous bilinear forms extending the scalar product of~$H$,
they would coincide on $L_{\bar\varphi^*}\times E_\varphi$, hence they 
coincide on the whole $L_{\bar\varphi^*}\times L_\varphi$
 by density of $E_\varphi$ in~$L_\varphi$.
 \smallskip\\
{\sc Case 2}. In the case of {\bf H2ii}, 
i.e.~when $H\embed L_{\bar\varphi^*}$ densely,
the argument is very similar, so we omit the details.
Let $x\in L_{\varphi}\subset H$ and $y\in L_{\bar{\varphi}^*}$,
with $x\neq0$ and $y\neq0$ (otherwise the definition is trivial).
Take a sequence $(y_n)_{n\in \mathbb{N}} \subset H$ 
such that $y_n \to y$ in $L_{\bar{\varphi}^*}$, and define
$$
	[y_n, x]:= (y_n,x)\,.
$$ 
As before, 
for every $n$, $k\in\enne$, by the Young inequality we have
\begin{align*}
	\left|
	\left(\frac{y_n-y_k}{\|y_n-y_k\|_{{\bar{\varphi}^*}}},
	\frac{x}{\|x\|_{{\varphi}}}\right) \right| 
	& \leq \varphi^*\left(\frac{y_n-y_k}{\|y_n-y_k\|_{{\bar{\varphi}^*}}} \right) 
	+ \varphi\left(\frac{x}{\|x\|_{{\varphi}}}\right) \\
	& = \bar{\varphi}^*\left(\frac{y_n-y_k}{\|y_n-y_k\|_{{\bar{\varphi}^*}}} \right) + 
	\varphi\left(\frac{x}{\|x\|_{{\varphi}}}\right) \\
	& \leq 2\,,
\end{align*}
yielding 
$|[y_n-y_m, x]| \leq 2 \|y_n-y_k\|_{{\bar{\varphi}^*}} \|x\|_{{\varphi}} \to 0$.
Therefore, we can define 
$$
[y, x]:= \lim_{n\to\infty}[y_n, x]_{\varphi}\,.
$$
As before, this definition is independent of the choice 
of the approximating sequence $(y_n)_n$, and
$[\cdot,\cdot]$ is the unique continuous bilinear form 
on $L_{\bar\varphi^*}\times L_\varphi$
extending the scalar product of $H$.
\end{proof}

\begin{remark}[Assumption {\bf H0}]
  Assumption {\bf H0} is very natural in applications to 
  nonlinear evolution problems: for example, for a wide class of 
  evolutionary PDEs of parabolic type a natural choice 
  for the space $H$ is $L^2(\Omega)$, with $\Omega$ being a 
  sufficiently smooth domain in $\erre^d$, while $\varphi$ is
  the convex part of the energy driving the evolution.
  Several examples are given in Section~\ref{sec:applications}.
\end{remark}
\begin{remark}[Assumption {\bf H1}]
  As far as assumption {\bf H1} is concerned, one can easily check 
  that this is verified by a huge class of potentials $\varphi$, not necessarily of 
  polynomial growth. More importantly,
  {\bf H1} includes 
  several interesting examples in which $E_\varphi$ is not reflexive,
  hence cannot be framed in classical variational structures:
  again, a spectrum of applications is given in Section~\ref{sec:applications}.
  It excludes, however, some singular Orlicz--Sobolev spaces 
  associated, for instance, 
  to convex functions defined of bounded intervals 
  and blowing up at the extreme points. In these extreme cases,
  the space $E_\varphi$ often reduces to the trivial space $\{0\}$.  
\end{remark}
\begin{remark}[Assumption {\bf H2}]
  Let us spend a few words on the idea behind {\bf H2}.
  This hypothesis requires
  the density either of $E_\varphi$ in $L_\varphi$, or
  of $H$ in $L_{\bar\varphi^*}$. In the former case, 
  we are supposing that the modular spaces 
  $E_\varphi$ in $L_\varphi$ are ``not too different'': 
  this is verified when the potential $\varphi$ satisfies 
  a so-called $\Delta_2$-type condition (see Section~\ref{sec:applications}),
  and in such a case it actually holds that $E_\varphi=L_\varphi$.
  In the latter case, by contrast, what we are 
  supposing is that the modular space $L_{\bar\varphi^*}$
  is not ``too much bigger'' than $H$ itself. Again, this happens
  when the conjugate function $\varphi^*$ satisfies a $\Delta_2$-type condition.
  The main advantage of assumption {\bf H2} is that in the majority of
  applications either $\varphi$ or $\varphi^*$ always satisfy a $\Delta_2$-condition:
  the rough idea is that whenever $\varphi$ is not $\Delta_2$
  (for example if it grows super-polynomially at infinity) then by contrast its conjugate 
  $\varphi^*$ behaves in a $\Delta_2$-fashion (it grows sub-polynomially),
  and viceversa.
  This allows to include in assumption {\bf H2} a wide variety 
  of very singular problems, where $E_\varphi$ and $L_\varphi$
  are not necessarily reflexive.
\end{remark}

\subsection{The extended variational setting}
The main idea is to work on the triplet
\[
  L_\varphi \embed H \embed L_{\bar\varphi^*}\,,
\]
where both inclusions are continuous, the first one is also dense,
and the second one is dense if {\bf H2ii} holds.
Let us point out that such variational setting is non-standard
for the following main reasons. First, the space $L_\varphi$
is allowed to be non-reflexive, thus including several applications 
to singular PDEs of evolutionary type. Secondly, 
the space $L_{\bar\varphi^*}$ is not the dual of $L_{\varphi}$, and
the duality pairing between them is not given by the classical 
duality, but by the generalized bilinear form $[\cdot,\cdot]$.
Finally, the space $L_\varphi$ and $L_{\bar\varphi^*}$ 
are not necessarily separable.

The lack of separability and reflexivity in evolution problems is a crucial issue
that creates several difficulties. 
As for separability, the main issue concerns measurability for 
vector-valued functions: indeed, by the Pettis measurability theorem, a necessary
condition for a Banach-space-valued function to be 
Bochner-measurable is that it is essentially separably-valued
(see, for example, \cite[Sect.~II, Thm.~2]{DieUhl1977}).
This forces us to work on 
spaces of weakly-measurable functions instead.
As for reflexivity,
the main drawback that we need to face is the following.
If $X$ is a reflexive Banach space, then
it is well-known that Sobolev--Bochner spaces in the form
$W^{1,p}(a,b; X)$, where $p\geq1$ and $[a,b]\subset\erre$ is a bounded interval,
can be characterized as spaces of absolutely continuous functions
in $L^p(a,b; X)$ with almost everywhere derivative in $L^p(a,b;X)$.
In particular, the reflexivity of $X$ implies that any 
absolutely continuous function $[a,b]\to X$
is almost everywhere differentiable:
see for example \cite[Thm.~1.16]{barbu-monot}.
Nevertheless, if $X$ is not reflexive, these results are actually false,
and such characterization of $W^{1,p}(a,b; X)$ spaces is no longer valid.
In this case, one has to additionally require the almost everywhere differentiability,
as this is not granted by the absolute continuity itself (see \cite{brezis-anal} and
\cite[Thm.~8.57]{Leo2017}).
In particular, if $X$ is not reflexive, there exist
absolutely continuous functions $[a,b]\to X$
that are nowhere differentiable (e.g.~\cite[Ex.~8.30 and 8.32]{Leo2017}). 

Let us introduce some notation for the spaces of vector-valued
integrable functions that we will use. 
We refer the reader to \cite[Sect.~II]{DieUhl1977} for the general theory of integration. 
From now on, $T>0$
is a fixed final time.
We set 
\begin{align*}
  &L^1_w(0,T; L_\varphi):=
  \left\{v\colon[0,T]\to L_{\varphi}: \quad[y,v]\in L^1(0,T)\quad\forall\,y\in L_{\bar\varphi^*}
  \right\}\,,\\
  &L^1_w(0,T; L_{\bar\varphi^*}):=
  \left\{v\colon[0,T]\to L_{\bar\varphi^*}:\quad [v,x]\in L^1(0,T)\quad\forall\,x\in L_{\varphi}
  \right\}\,,\\
  &L^1(0,T; L_\varphi):=
  \left\{v\colon[0,T]\to L_{\varphi} \text{ strongly measurable}: \quad\norm{v}_\varphi\in L^1(0,T)\right\}\,,\\
  &L^1(0,T; L_{\bar\varphi^*}):=
  \left\{v\colon[0,T]\to L_{\bar\varphi^*} \text{ strongly measurable}: \quad\norm{v}_{\bar\varphi^*}
  \in L^1(0,T)\right\}\,.
\end{align*}
Let us point out that under assumption ${\bf H2i}$ we have that 
$L_\varphi$ is separable, since so is $E_\varphi$ by {\bf H1},
hence elements in $L^1_w(0,T; L_\varphi)$ are also strongly measurable in this case.
Under assumption ${\bf H2ii}$, we have instead that 
$L_{\bar\varphi^*}$ is separable, since so is $H$,
and in this case elements of $L^1_w(0,T; L_{\bar\varphi^*})$
are strongly measurable.

The following result holds also for non-reflexive spaces, 
hence are fundamental in our setting: the reader can refer to 
\cite[Thm.~1.17]{barbu-monot}
\begin{prop}\label{prop:ac}
	Let $X$ be a Banach space and $u\in L^p(a,b;X)$, with 
	$1\leq p \leq \infty$ and $[a,b]\subset\erre$.
	Then $u\in W^{1,p}(a,b;X)$ if and only if 
	there exists an absolutely continuous function 
	$u^0\colon [a,b] \to X$, i.e.~$u^0\in AC([0,T]; X)$, 
	which is almost everywhere 
	differentiable on $[a,b]$ with
	$\frac{du^0}{dt} \in L^p(0,T;X)$, 
	such that 
	$u(t) = u^0(t)$ for almost every $t\in(a,b)$.
	In such case, $\frac{du^0}{dt}$ coincides with the weak derivative $\partial_t u$
	of $u$.
\end{prop}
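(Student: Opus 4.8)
The plan is to establish both implications by reducing the vector-valued assertions to their scalar counterparts via composition with functionals $x^*\in X^*$ together with Hahn--Banach, and by invoking the Lebesgue differentiation theorem for Bochner integrals. The decisive point is that the non-reflexivity of $X$ never enters: almost-everywhere differentiability is \emph{assumed} in the statement rather than deduced from absolute continuity (which would require a Radon--Nikodym-type property of $X$). As preliminary facts I would record that, since $L^p(a,b;X)\subset L^1(a,b;X)$ on the bounded interval $[a,b]$, a Bochner-measurable $g\in L^p(a,b;X)$ is essentially separably-valued, its indefinite integral $G(t):=\int_a^t g(s)\,ds$ belongs to $AC([a,b];X)$ by absolute continuity of the Bochner integral, and almost every $t$ is a Lebesgue point in the sense $\lim_{h\to0}\frac1h\int_t^{t+h}\norm{g(s)-g(t)}_X\,ds=0$; at such $t$ the quotient $\frac1h\bigl(G(t+h)-G(t)\bigr)$ converges to $g(t)$, so $G$ is almost everywhere differentiable with $\frac{dG}{dt}=g$.

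For the implication $(\Rightarrow)$, I would let $g:=\partial_t u\in L^p(a,b;X)$ be the weak derivative and set $G(t):=\int_a^t g$. By the preliminary facts $G\in AC([a,b];X)$ is almost everywhere differentiable with $\frac{dG}{dt}=g$, and a Fubini computation shows that the weak derivative of $G$ is again $g$. Hence $w:=u-G$ has vanishing weak derivative, i.e.~$\int_a^b w(t)\phi'(t)\,dt=0$ for every scalar $\phi\in C_c^\infty(a,b)$. A du~Bois-Reymond argument then forces $w$ to be constant: fixing $\phi_0\in C_c^\infty(a,b)$ with $\int\phi_0=1$ and writing an arbitrary $\eta\in C_c^\infty(a,b)$ as $\eta=\bigl(\int\eta\bigr)\phi_0+\psi'$ with $\psi(t):=\int_a^t\bigl(\eta-(\int\eta)\phi_0\bigr)\in C_c^\infty(a,b)$, one gets $\int_a^b(w-\bar w)\eta=0$, where $\bar w:=\int_a^b w\phi_0\in X$; applying an arbitrary $x^*\in X^*$ and using the scalar fundamental lemma, then Hahn--Banach, yields $w=\bar w$ almost everywhere. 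Setting $u^0:=G+\bar w$ produces the required representative.

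For the converse $(\Leftarrow)$, I would start from $u^0\in AC([a,b];X)$, almost everywhere differentiable with $\frac{du^0}{dt}=:g\in L^p(a,b;X)$, and first prove the fundamental theorem of calculus $u^0(t)=u^0(a)+\int_a^t g(s)\,ds$. To this end I would consider $h:=u^0-G$, which is absolutely continuous and almost everywhere differentiable with $h'=g-g=0$ almost everywhere; the crux is the lemma that an absolutely continuous $X$-valued function with almost everywhere zero derivative is constant. This reduces to the scalar case: for each $x^*\in X^*$ the map $t\mapsto\ip{x^*}{h(t)}_{X^*,X}$ is scalar, absolutely continuous, and has derivative $\ip{x^*}{h'(t)}_{X^*,X}=0$ almost everywhere, hence is constant, and Hahn--Banach upgrades this to $h$ being constant. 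With the fundamental theorem of calculus in hand, $u=u^0$ equals a constant plus the indefinite integral of $g\in L^p$, and the integration-by-parts identity shows $u\in W^{1,p}(a,b;X)$ with weak derivative $g=\frac{du^0}{dt}$, which simultaneously settles the final clause of the statement.

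I expect the main obstacle to be the measurability bookkeeping underlying the Lebesgue differentiation theorem in a possibly non-separable ambient space: one must invoke essential separable-valuedness of Bochner-measurable functions, pass to a separable closed subspace of $X$, and only there apply the scalar differentiation and density results. Once this is set up correctly, the two scalar reductions---\emph{``absolute continuity with almost everywhere zero derivative implies constant''} and the \emph{vector-valued fundamental lemma}---are routine, and no reflexivity is ever used.
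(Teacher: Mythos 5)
Your proof is correct and complete in all essentials: the Lebesgue differentiation theorem for Bochner integrals (valid in any Banach space once one passes to a separable closed subspace, as you note), the du~Bois-Reymond argument for the forward implication, and the reduction of \emph{``absolutely continuous with almost everywhere zero derivative implies constant''} to the scalar case via functionals and Hahn--Banach for the converse. The paper offers no proof of this proposition---it is quoted from Barbu \cite[Thm.~1.17]{barbu-monot}---and your argument is precisely the standard one found there, including the correct observation that reflexivity is never used because almost-everywhere differentiability is assumed rather than derived from absolute continuity.
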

Inspired by Proposition~\ref{prop:ac}, it is natural to define
\begin{align*}
  W^{1,1}_w(0,T; L_{\bar\varphi^*}):=
  \bigg\{&v:[0,T]\to L_{\bar\varphi^*}: 
  \quad\exists\,v'\in L^1_w(0,T; L_{\bar\varphi^*}):\\
  &\quad[v(t),x]=[v(0),x]+\int_0^t[v'(s),x]\,ds\quad\forall\,x\in L_{\varphi}\bigg\}\,.
\end{align*}
Note that $W^{1,1}_w(0,T; L_{\bar\varphi^*})$ only
implies weak* continuity in $E_\varphi^*=L_{\bar\varphi^*}$,
and not absolute continuity as in the classical case.

\subsection{Main results}
We are ready to present here the main results of the paper.
From now on, $T>0$ is a a fixed final time.

The first result that we present is a fundamental 
computational tool in order to handle evolution equations 
in singular modular spaces. It ensures 
that under assumptions {\bf H0--\bf H2} 
the novel variational setting $(L_\varphi, H, L_{\bar\varphi^*})$
with duality given by $[\cdot, \cdot]$
is actually suited for dealing with evolution problems,
even without the classical reflexivity/separabilty assumptions. 
This is an interesting generalization to the non-reflexive
and non-separable case
of a well-know ``chain-rule'' property for vector-valued functions.

\begin{thm}[Generalized chain rule]
\label{thm:chain}
	Assume {\bf H0--H2}, and let
	\[
	u\in W^{1,1}_w(0,T;L_{\bar{\varphi}^*}) \cap 
	L^{1}_w(0,T;L_\varphi)\,,
	\]
	be such that
	\[
	\partial_tu=u_1' + u_2'\,, \qquad\text{with}\qquad
	u_1'\in L^1_w(0,T;L_{\bar{\varphi}^*})\,, \quad
	u_2'\in L^1(0,T; H)\,.
	\]
	If there exists $\alpha>0$ such that 
	\[
	\varphi(\alpha u),\;\varphi^*(\alpha u_1')\in L^1(0,T)\,,
	\]
	then $u\in C^0([0,T]; H)$,
	the function $t\mapsto\norm{u(t)}_H^2$, $t\in[0,T]$,
	is absolutely continuous, and it holds that
	\begin{equation}\label{eq:u^2}
		[\partial_t u, u] = \frac{d}{dt}\frac{1}{2} \|u\|^2_H
		\qquad\text{a.e.~in } (0,T)\,.
	\end{equation}
\end{thm}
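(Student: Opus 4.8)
The plan is to establish the chain rule \eqref{eq:u^2} by an elliptic-in-time regularization via convolution, following the strategy announced in the introduction, and then pass to the limit exploiting the abstract Jensen inequality. The fundamental obstacle is that the classical chain rule argument for Gelfand triples $V \embed H \embed V^*$ relies on $V$ being reflexive and separable, which here fails: neither $L_\varphi$ nor $L_{\bar\varphi^*}$ is reflexive or separable in general, and the pairing $[\cdot,\cdot]$ is not a genuine duality pairing. The whole difficulty is therefore to recover the formula \eqref{eq:u^2} using only the structure provided by {\bf H0--H2} and the integrability hypotheses $\varphi(\alpha u), \varphi^*(\alpha u_1') \in L^1(0,T)$.

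First I would set up the regularization. Extend $u$ suitably to a neighborhood of $[0,T]$ and define mollifications $u_\eps := u * \eta_\eps$, where $\eta_\eps$ is a standard symmetric mollifier in the time variable. Because convolution commutes with the pairing $[\cdot,\cdot]$ and with weak derivatives in the sense of $W^{1,1}_w$, one gets $u_\eps \in C^\infty([\delta,T-\delta]; Z)$ for the relevant spaces $Z$, with $\partial_t u_\eps = (\partial_t u)*\eta_\eps = (u_1')*\eta_\eps + (u_2')*\eta_\eps$. For smooth $H$-valued functions the identity $[\partial_t u_\eps, u_\eps] = (\partial_t u_\eps, u_\eps) = \frac{1}{2}\frac{d}{dt}\norm{u_\eps}_H^2$ holds pointwise and classically, since everything lives in $H$. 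The regularized problem thus satisfies the chain rule trivially; the task is to integrate this identity against a test function $\chi \in C_c^\infty(0,T)$ and pass $\eps \to 0$ in
\begin{equation*}
  \int_0^T \chi(t)\,[\partial_t u_\eps(t), u_\eps(t)]\,dt
  = -\frac{1}{2}\int_0^T \chi'(t)\,\norm{u_\eps(t)}_H^2\,dt\,.
\end{equation*}

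The hard part, and the heart of the proof, will be controlling the left-hand side uniformly in $\eps$ and identifying its limit, because the pairing $[\cdot,\cdot]$ is only continuous with respect to the $\norm{\cdot}_\varphi$ and $\norm{\cdot}_{\bar\varphi^*}$ norms, whereas convolution is well-behaved on the \emph{modular} level rather than the Luxemburg-norm level when $\varphi$ is not $\Delta_2$. This is exactly where the abstract Jensen inequality of Haase enters: applied to the convex semi-modulars $\varphi$ and $\varphi^*$ and the averaging operation given by convolution with the probability density $\eta_\eps$, it yields
\begin{equation*}
  \int_0^T \varphi(\alpha u_\eps(t))\,dt \leq \int_0^T \varphi(\alpha u(t))\,dt\,,
  \qquad
  \int_0^T \varphi^*(\alpha (u_1')_\eps(t))\,dt \leq \int_0^T \varphi^*(\alpha u_1'(t))\,dt\,,
\end{equation*}
so that the hypotheses $\varphi(\alpha u), \varphi^*(\alpha u_1') \in L^1(0,T)$ transfer to uniform bounds on the regularizations. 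Via the generalized Hölder inequality \eqref{ineq:holder} these give a uniform $L^1$-bound on $[\,(u_1')_\eps, u_\eps\,]$, while the term involving $u_2' \in L^1(0,T;H)$ is handled by the genuine $H$-scalar product and standard mollification estimates. Combining modular convergence of $u_\eps \to u$ and $(u_1')_\eps \to u_1'$ with these uniform bounds, and using a Vitali-type argument together with the continuity of $[\cdot,\cdot]$, one passes to the limit on both sides.

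Finally I would extract the qualitative conclusions. The uniform modular bounds and the limiting integral identity show that $t \mapsto \norm{u(t)}_H^2$ has a weak derivative in $L^1(0,T)$ equal to $2\,[\partial_t u, u]$, which gives absolute continuity of $t \mapsto \norm{u(t)}_H^2$ and hence \eqref{eq:u^2} almost everywhere. Continuity $u \in C^0([0,T];H)$ then follows: the function $t \mapsto \norm{u(t)}_H^2$ is continuous, $u \in W^{1,1}_w$ gives weak continuity in $E_\varphi^*$ and hence weak continuity in $H$ on the dense subspace $E_\varphi$, and weak continuity together with continuity of the norm upgrades to strong continuity in $H$ by the standard radial-plus-weak argument. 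I expect the delicate point throughout to be justifying that convolution interacts correctly with the nonstandard pairing $[\cdot,\cdot]$ and the weak-measurability framework $L^1_w$, since the absence of Bochner measurability means one cannot invoke the usual vector-valued mollification theorems directly and must instead argue scalarly against elements of $L_{\bar\varphi^*}$ and $L_\varphi$.
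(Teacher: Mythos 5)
Your proposal follows essentially the same route as the paper: time-regularisation by convolution with a symmetric mollifier, the classical chain rule for the smooth $H$-valued regularisations, Haase's abstract Jensen inequality for sub-Markovian operators to transfer the modular bounds $\varphi(\alpha u),\varphi^*(\alpha u_1')\in L^1(0,T)$ to the regularised functions, a Vitali/uniform-integrability argument to pass to the limit in $[(u_1')_\eps,u_\eps]$, and the weak-continuity-plus-norm-continuity upgrade to get $u\in C^0([0,T];H)$. The only points the paper treats more explicitly are the case split between {\bf H2i} and {\bf H2ii} (which determines on which side the convolution is Bochner-measurable), the Borel-measurability lemma for $\norm{u}_\varphi$ and $\norm{u}_H$, and the a priori derivation of $u\in L^\infty(0,T;H)$ from the regularised identity itself, which is needed to handle the $u_2'$ term; these are refinements of, not departures from, your plan.
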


The second result concerns existence of variational solutions
for evolution equations in modular spaces.

\begin{thm}[Existence of solutions]
  \label{thm}
  Assume {\bf H0--H2} and let 
  \beq
  \label{data}
  u_0\in H\,,\qquad
  f\in L^1(0,T; H)\,.
  \eeq
  Then, there exists a unique pair $(u,\xi)$, with 
  \begin{align}
  \label{u}
  &u \in W^{1,1}_w(0,T; L_{\bar\varphi^*})\cap C^0([0,T]; H)\cap L^1_w(0,T; L_\varphi)\,,\\
  \label{xi}
  &\xi \in L^1_w(0,T; L_{\bar\varphi^*})\,,\\
  \label{int}
  &\varphi(u),\,\bar\varphi^*(\xi) \in L^1(0,T)\,,
  \end{align}
  such that
  \beq\label{eq}
  \partial_t u + \xi = f \quad\text{in } L_{\bar\varphi^*}\quad\text{a.e.~in } (0,T)\,,
  \qquad
  u(0)=u_0\,,
  \eeq
  and
  \beq\label{sub}
  \varphi(u)+ [\xi,x-u] \leq \varphi(x) \quad\forall\,x\in E_\varphi\,, \quad\text{a.e.~in } (0,T)\,.
  \eeq
  Moreover, the following energy equality holds:
  \beq\label{energy}
  \frac12\norm{u(t)}_H^2
  +\int_0^t\left[\xi(s), u(s)\right]\,ds=
  \frac12\norm{u_0}_H^2
  +\int_0^t\left(f(s), u(s)\right)\,ds \qquad\forall\,t\in[0,T]\,.
  \eeq
  In particular, under {\bf H2i} it holds that $u\in L^1(0,T; L_\varphi)$,
  while under {\bf H2ii} it holds that $u\in W^{1,1}(0,T; L_{\bar\varphi^*})$
  and $\xi\in L^1(0,T; L_{\bar\varphi^*})$.
\end{thm}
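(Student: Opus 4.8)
The plan is to construct the solution by a Moreau--Yosida approximation of $\varphi$ followed by a passage to the limit, the genuinely delicate point being the identification of the nonlinearity without reflexivity. For $\lambda>0$ I would introduce the regularisation $\varphi_\lambda(x):=\min_{y\in H}\{\frac1{2\lambda}\norm{x-y}_H^2+\varphi(y)\}$, whose Fr\'echet gradient $\nabla\varphi_\lambda$ is monotone and globally Lipschitz on $H$. Since the nonlinearity is then Lipschitz, classical Hilbert-space theory provides for each $\lambda$ a unique strong solution $u_\lambda\in W^{1,1}(0,T;H)$ of $\partial_t u_\lambda+\nabla\varphi_\lambda(u_\lambda)=f$ with $u_\lambda(0)=u_0$ (via Picard iteration in $C^0([0,T];H)$, the integral formulation giving the $W^{1,1}$ regularity thanks to $f\in L^1$). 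Writing $\xi_\lambda:=\nabla\varphi_\lambda(u_\lambda)$ and $J_\lambda u_\lambda:=u_\lambda-\lambda\xi_\lambda$, one has the defining relation $\xi_\lambda\in\partial\varphi(J_\lambda u_\lambda)$.

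Next I would derive $\lambda$-uniform estimates. Testing with $u_\lambda$ and using $(\xi_\lambda,J_\lambda u_\lambda)\geq\varphi(J_\lambda u_\lambda)$, which is the subdifferential inequality at the origin, yields
\beq
  \tfrac12\norm{u_\lambda(t)}_H^2+\int_0^t\varphi(J_\lambda u_\lambda)\,ds
  +\lambda\int_0^t\norm{\xi_\lambda}_H^2\,ds
  \leq\tfrac12\norm{u_0}_H^2+\int_0^t(f,u_\lambda)\,ds\,,
\eeq
so that a Gronwall argument adapted to $f\in L^1(0,T;H)$ bounds $u_\lambda$ in $L^\infty(0,T;H)$, bounds $\int_0^T\varphi(J_\lambda u_\lambda)$, and forces $\lambda\int_0^T\norm{\xi_\lambda}_H^2\to0$. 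The decisive bound on the nonlinearity follows from the Fenchel equality $\varphi^*(\xi_\lambda)=(\xi_\lambda,J_\lambda u_\lambda)-\varphi(J_\lambda u_\lambda)$ together with $\int_0^T(\xi_\lambda,u_\lambda)=\int_0^T(f,u_\lambda)-\frac12\norm{u_\lambda(T)}_H^2+\frac12\norm{u_0}_H^2$, which gives $\int_0^T\varphi^*(\xi_\lambda)\leq C$; by Lemma~\ref{lem:prop} this controls $\xi_\lambda$ in an integrated modular sense in $L_{\bar\varphi^*}$.

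The hardest step is compactness and the limit passage, where non-reflexivity and non-separability bite. From $\lambda\norm{\xi_\lambda}_H^2\to0$ one gets $J_\lambda u_\lambda-u_\lambda\to0$ in $L^2(0,T;H)$, so up to a subsequence $u_\lambda$ and $J_\lambda u_\lambda$ converge weakly-$*$ in $L^\infty(0,T;H)$ to a common limit $u$. The main obstacle is the convergence of $\xi_\lambda$: the spaces are neither reflexive nor separable, so no standard weak compactness applies. Here I would use that $E_\varphi$ is separable by {\bf H1}, so that bounded sets of $L_{\bar\varphi^*}=E_\varphi^*$ are sequentially weakly-$*$ compact, while the modular bound $\int_0^T\varphi^*(\xi_\lambda)\leq C$ enforces uniform integrability through the superlinearity of $\varphi^*$; a de la Vall\'ee Poussin / Dunford--Pettis type argument then extracts $\xi_\lambda\rightharpoonup\xi$ in $L^1_w(0,T;L_{\bar\varphi^*})$, with $\varphi(u),\varphi^*(\xi)\in L^1(0,T)$ by weak lower semicontinuity. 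Passing to the limit in the equation produces $\partial_t u+\xi=f$ in $L_{\bar\varphi^*}$ a.e., together with the regularity \eqref{u}--\eqref{int} and $u(0)=u_0$.

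Finally I would identify $\xi\in\partial\varphi(u)$, where the generalised chain rule of Theorem~\ref{thm:chain} is the crucial engine. As $u$ has precisely the regularity required there (with $u_1'=-\xi$, $u_2'=f$), the chain rule gives $u\in C^0([0,T];H)$ and the energy equality \eqref{energy}; comparing it with the $\liminf$ of the approximate energy identity and the weak lower semicontinuity of $t\mapsto\norm{u(t)}_H^2$ yields the sharp bound $\limsup_\lambda\int_0^T(\xi_\lambda,J_\lambda u_\lambda)\leq\int_0^T[\xi,u]$. On the other hand, weak lower semicontinuity of $\varphi,\varphi^*$ and the Fenchel equality for the approximations give $\int_0^T[\varphi(u)+\varphi^*(\xi)]\leq\liminf_\lambda\int_0^T(\xi_\lambda,J_\lambda u_\lambda)$; combining the two inequalities with the generalised Young inequality $[\xi,u]\leq\varphi(u)+\varphi^*(\xi)$ associated with the duality $[\cdot,\cdot]$ forces the pointwise Fenchel equality $\varphi(u)+\varphi^*(\xi)=[\xi,u]$ a.e., from which \eqref{sub} follows by Young's inequality for test elements $x\in E_\varphi$. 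Uniqueness then follows by testing the difference of two solutions with $u_1-u_2$ through Theorem~\ref{thm:chain}, the Fenchel equality upgrading to monotonicity $[\xi_1-\xi_2,u_1-u_2]\geq0$, whence $\frac{d}{dt}\norm{u_1-u_2}_H^2\leq0$ with null initial datum. The refinements under {\bf H2i}, respectively {\bf H2ii}, are immediate from the separability of $L_\varphi$, respectively $L_{\bar\varphi^*}$, which turns weak measurability into strong measurability.
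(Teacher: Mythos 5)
Your overall architecture coincides with the paper's: Moreau--Yosida regularisation, the energy estimate exploiting the Fenchel equality $(A_\lambda(u_\lambda),J_\lambda(u_\lambda))=\varphi(J_\lambda(u_\lambda))+\varphi^*(A_\lambda(u_\lambda))$, the chain rule of Theorem~\ref{thm:chain} to get the energy equality, the $\limsup$--$\liminf$ sandwich forcing the pointwise Fenchel equality $[\xi,u]=\varphi(u)+\bar\varphi^*(\xi)$, and uniqueness via monotonicity and Gronwall. All of that is sound and matches the paper.

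There is, however, one genuine gap, precisely at the step you yourself flag as the hardest: the extraction of $\xi$. You claim that the bound $\int_0^T\varphi^*(\xi_\lambda)\leq C$ ``enforces uniform integrability through the superlinearity of $\varphi^*$'' and that a Dunford--Pettis argument then yields a weak limit in $L^1_w(0,T;L_{\bar\varphi^*})$. But $\varphi^*$ is \emph{not} superlinear in general: assumption {\bf H0} gives $\varphi(x)\geq c\norm{x}_H^s$, hence only the \emph{upper} bound $\varphi^*(y)\leq C\norm{y}_H^{s'}$ on $H$; and $\bar\varphi^*$ is superlinear on $E_\varphi^*$ only if $\varphi$ is bounded on bounded sets of $E_\varphi$, which fails for the singular (e.g.\ exponential-type) modulars the theory is designed for. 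Moreover, weak-$*$ sequential compactness of bounded sets of $E_\varphi^*$ does not by itself give weak compactness of $(\xi_\lambda)_\lambda$ in a vector-valued $L^1$ space over a non-reflexive dual; the classical Dunford--Pettis theorem does not apply there. The paper resolves exactly this by Lemma~\ref{lem:V0}: it computes the conjugate $\varphi_0^*$ of $\varphi_{|V_0}$ on $V_0^*$, shows $\varphi_0^*=\bar\varphi^*$ on $E_\varphi^*$ and $+\infty$ outside, and proves that $\varphi_0^*$ \emph{is} superlinear on $V_0^*$ precisely because {\bf H1} postulates that $\varphi$ is bounded on bounded subsets of $V_0$. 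Uniform integrability and Dunford--Pettis are then applied in the separable \emph{reflexive} space $V_0^*$, where they are legitimate, and the modular bound pushes the limit back into $L_{\bar\varphi^*}$. Your proposal never uses $V_0$ or the boundedness hypothesis in {\bf H1} --- a sign that the compactness step, as written, is not justified. The same lemma is also needed to apply weak lower semicontinuity of $\int\varphi_0^*$ along the weak convergence of $\xi_\lambda$ (which takes place in $V_0^*$, not in $E_\varphi^*$) in your identification step.
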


Condition \eqref{sub}  is the natural generalization of the 
classical subdifferential inclusion
\beq
  \label{sub2}
  \xi\in\partial\varphi(u)\qquad\text{a.e.~in } (0,T)\,.
\eeq
In our setting,  $\xi$ is less regular 
than $H$, as $\xi$ only belongs to $L_{\bar\varphi^*}$, 
hence the classical differential inclusion \eqref{sub2} 
makes no sense here.
Similarly, the classical relaxation 
of the inclusion \eqref{sub2} given by 
\beq\label{sub3}
\xi\in\partial\bar\varphi(u) \qquad\text{a.e.~in } (0,T)\,,
\eeq
is useless as well, as $u$ is not necessarily $E_\varphi$-valued.
For these reasons, the introduction of the novel duality $[\cdot, \cdot]$
is crucial, as it allows to give sense to the subdifferential inclusion
in the spaces $L_\varphi$--$L_{\bar\varphi^*}$ as done in \eqref{sub}.
Clearly, this is a very natural extension of both \eqref{sub2} and \eqref{sub3}:
indeed, whenever $\xi$ is $H$-valued (or $u$ is $E_\varphi$-valued, respectively)
then \eqref{sub} is equivalent to \eqref{sub2} (or \eqref{sub3}, respectively).
An equivalent formulation of the relaxed condition \eqref{sub} is given by
\beq
  \label{sub4}
  [\xi, u] = \varphi(u) + \bar\varphi^*(\xi) \qquad\text{a.e.~in } (0,T)\,.
\eeq

Finally, the last result that we present 
is a continuous dependence result, with ensures that 
the evolution problem is actually well-posed.
\begin{thm}[Continuous dependence on the data]
  \label{thm2}
  Assume {\bf H0--H2}, and
  let $(u_0^1, f_1)$ and $(u_0^2,f_2)$ satisfy \eqref{data}.
  Then, for any respective solutions 
  $(u_1,\xi_1)$ and $(u_2,\xi_2)$
  to \eqref{u}--\eqref{sub}, it holds that
  \[
  \norm{u_1-u_2}_{C^0([0,T]; H)}^2
  +\norm{[\xi_1-\xi_2, u_1-u_2]}_{L^1(0,T)}
  \leq 2
  \left(\norm{u_0^1-u_0^2}_H^2+\norm{f_1-f_2}_{L^1(0,T; H)}^2\right)\,.
  \]
\end{thm}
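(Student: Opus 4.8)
The plan is to exploit the monotonicity of the subdifferential inclusion \eqref{sub} together with the generalized chain rule of Theorem~\ref{thm:chain} applied to the difference of two solutions. First I would set $u:=u_1-u_2$, $\xi:=\xi_1-\xi_2$, $u_0:=u_0^1-u_0^2$, and $f:=f_1-f_2$. Subtracting the two equations \eqref{eq} gives $\partial_t u+\xi=f$ in $L_{\bar\varphi^*}$ almost everywhere, with $u(0)=u_0$. The central point is that $u$ inherits exactly the regularity needed to invoke Theorem~\ref{thm:chain}: indeed, $u\in W^{1,1}_w(0,T;L_{\bar\varphi^*})\cap L^1_w(0,T;L_\varphi)$ by \eqref{u}, and its weak derivative splits as $\partial_t u=-\xi+f$ with $-\xi\in L^1_w(0,T;L_{\bar\varphi^*})$ playing the role of $u_1'$ and $f\in L^1(0,T;H)$ playing the role of $u_2'$. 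The integrability hypothesis $\varphi(\alpha u),\varphi^*(\alpha u_1')\in L^1(0,T)$ for some $\alpha>0$ follows from the convexity of $\varphi$ and $\varphi^*$ together with \eqref{int}: since $\varphi(u_i),\bar\varphi^*(\xi_i)\in L^1(0,T)$, a convex-combination estimate (e.g.\ $\varphi(\tfrac12(u_1-u_2))\le\tfrac12\varphi(u_1)+\tfrac12\varphi(u_2)$ and likewise for $\varphi^*$) yields the required $L^1$-in-time control with $\alpha=\tfrac12$.

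Having verified the hypotheses, Theorem~\ref{thm:chain} gives that $t\mapsto\tfrac12\|u(t)\|_H^2$ is absolutely continuous and
\[
[\partial_t u,u]=\frac{d}{dt}\frac12\|u\|_H^2\qquad\text{a.e.~in }(0,T)\,.
\]
Substituting $\partial_t u=f-\xi$ and integrating on $(0,t)$ produces the energy balance for the difference,
\[
\frac12\|u(t)\|_H^2+\int_0^t[\xi(s),u(s)]\,ds
=\frac12\|u_0\|_H^2+\int_0^t(f(s),u(s))\,ds\,.
\]
The next step is to show that the dissipation term is nonnegative, i.e.\ $[\xi(s),u(s)]\ge0$ a.e. This is the monotonicity of the generalized subdifferential: writing out \eqref{sub} for $(u_1,\xi_1)$ tested with $x=$ a suitable approximation of $u_2$ and vice versa, and summing, the cross terms combine into $[\xi_1-\xi_2,u_1-u_2]\ge0$. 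The delicate point here is that \eqref{sub} only allows test functions $x\in E_\varphi$, whereas $u_1,u_2$ take values merely in $L_\varphi$; hence one cannot directly substitute $x=u_2(s)$. The equivalent formulation \eqref{sub4}, namely $[\xi_i,u_i]=\varphi(u_i)+\bar\varphi^*(\xi_i)$, circumvents this: combined with the Young-type inequality $[\xi_i,u_j]\le\varphi(u_j)+\bar\varphi^*(\xi_i)$ (which is the generalized Hölder/Young inequality underlying \eqref{ineq:holder} and valid for all $L_\varphi\times L_{\bar\varphi^*}$ pairs), one obtains
\[
[\xi_1-\xi_2,u_1-u_2]=[\xi_1,u_1]+[\xi_2,u_2]-[\xi_1,u_2]-[\xi_2,u_1]\ge0
\]
almost everywhere, which is precisely the monotonicity needed.

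With nonnegativity of the dissipation in hand, the estimate becomes
\[
\frac12\|u(t)\|_H^2+\int_0^t[\xi(s),u(s)]\,ds
\le\frac12\|u_0\|_H^2+\int_0^t(f(s),u(s))\,ds\,.
\]
I would then control the forcing term by Cauchy--Schwarz and Young: $(f(s),u(s))\le\|f(s)\|_H\|u(s)\|_H$, and absorbing via $\|u(s)\|_H\le\|u\|_{C^0([0,T];H)}$ gives $\int_0^t(f,u)\,ds\le\|f\|_{L^1(0,T;H)}\|u\|_{C^0([0,T];H)}$. Taking the supremum over $t\in[0,T]$ on the left, a standard quadratic-inequality argument (of the form $\tfrac12 a^2\le\tfrac12 b^2+c\,a$ implies $a^2\le 2b^2+2c^2$ after completing the square, or equivalently $a\le b+\sqrt{b^2+2c^2}$ handled crudely) yields the claimed bound
\[
\|u_1-u_2\|_{C^0([0,T];H)}^2+\|[\xi_1-\xi_2,u_1-u_2]\|_{L^1(0,T)}
\le 2\bigl(\|u_0^1-u_0^2\|_H^2+\|f_1-f_2\|_{L^1(0,T;H)}^2\bigr)\,.
\]
The main obstacle, as flagged above, is the monotonicity step: making rigorous the inequality $[\xi_1-\xi_2,u_1-u_2]\ge0$ despite the test-function restriction $x\in E_\varphi$ in \eqref{sub}. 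The resolution is to avoid testing altogether and argue purely through the pointwise identity \eqref{sub4} and the generalized Young inequality, which hold on the full $L_\varphi\times L_{\bar\varphi^*}$ duality; uniqueness of the solution then follows immediately by taking $(u_0^1,f_1)=(u_0^2,f_2)$.
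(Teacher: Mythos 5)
Your proposal is correct and follows essentially the same route as the paper: difference of the two equations, verification of the chain-rule hypotheses via convexity and symmetry of $\varphi$ and $\bar\varphi^*$ with $\alpha=\tfrac12$, the energy identity from Theorem~\ref{thm:chain}, monotonicity $[\xi_1-\xi_2,u_1-u_2]\geq 0$ obtained exactly as in the paper from \eqref{sub4} together with the generalized Young inequality (thus avoiding the $E_\varphi$ test-function restriction), and a Gronwall-type closing estimate. The only cosmetic difference is that the paper simply invokes the Gronwall lemma at the end where you spell out the quadratic-inequality argument.
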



\section{Proof of the generalized ``chain rule''}
\label{sec:chain}
This section is devoted to the proof of Theorem~\ref{thm:chain}.
Let us work then in the notation and setting of Theorem~\ref{thm:chain}.
The proof is organized in several steps.

\subsection{Time-regularization}
First of all, the idea is to regularize $u$ in time using convolutions.
For every $n\in\enne$, we introduce the convolution operator 
\begin{align*}
  &T_n\colon L^1(0,T)\to L^1(0,T)\,, \\ 
  &(T_nv)(t):=n \int_0^T v(s) \varrho ((t-s)n)\,ds\,, \quad t\in[0,T]\,,\quad v\in L^1(0,T)\,,
\end{align*}
where $\varrho \in C^\infty_c(\mathbb{R})$  is nonnegative
with $\int_{\erre} \varrho(s)\,ds = 1$, $\varrho(t)=\varrho(-t)$,
and $\operatorname{supp}\varrho \subset [-1,1]$.
It is well-known that $T_n$ is linear, continuous,
and sub-Markovian, in the sense that, for every $v\in L^1(0,T)$,
\[
  0\leq v\leq1 \quad\text{a.e.~in } (0,T) \qquad\Rightarrow\qquad
  0\leq T_nv\leq1 \quad\text{a.e.~in } (0,T)\,.
\]
Also, it holds that 
\[
  \|T_nv\|_{L^1(0,T)} \leq \|v\|_{L^1(0,T)} \qquad\forall\,v\in L^1(0,T)\,.
\]
Furthermore, for any Banach space $X$,
it is clear that $T_n$ can be extended to the vector-valued operator
\[
  T_n^X\colon L^1(0,T; X)\to L^1(0,T; X)\,,
\]
and it is well-known that for every $v\in L^1(0,T; X)$ it holds that 
$T_n^Xv\to v$ in $L^1(0,T; X)$ as $n\to\infty$,
see for instance \cite[Thm 8.20--8.21]{Leo2017}.

\subsection{Proof under assumption {\bf H2ii}}
Let us prove the result under assumption {\bf H2ii} first.
As we pointed out above, this implies that $L_{\bar\varphi^*}$
is separable, so that actually
\beq\label{aux0}
  u \in W^{1,1}(0,T; L_{\bar\varphi^*})\,.
\eeq
In particular, $u\colon [0,T]\to L_{\bar\varphi^*}$ is Bochner-measurable, 
hence also Borel-measurable, and $\norm{u}_{\bar\varphi^*}$
is a measurable function.
We show now that actually also
$\norm{u}_\varphi$ and $\norm{u}_H$ are measurable functions.
To this end, we use the following lemma.
\begin{lem}
\label{lem:borel}
  Set 
  \[
  \Psi_*\colon L_{\bar\varphi^*}\to[0,+\infty]\,,\qquad
  \Psi_*(y):=\begin{cases}
  \norm{y}_H \quad\text{if } y\in H\,,\\
  +\infty \quad\text{otherwise}\,,
  \end{cases}
\]
and 
\[
  \Psi_H\colon H\to[0,+\infty]\,,\qquad
  \Psi_H(y):=\begin{cases}
  \norm{y}_\varphi \quad\text{if } y\in L_\varphi\,,\\
  +\infty \quad\text{otherwise}\,.
  \end{cases}
\]
Then, $\Psi_*$ and $\Psi_H$ are convex, proper, 
and lower semicontinuous. In particular, 
$H$ is a Borel subset of $L_{\bar\varphi^*}$ and that 
$L_\varphi$ is a Borel subset of $H$.
\end{lem}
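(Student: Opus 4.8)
The plan is to establish Lemma~\ref{lem:borel} by first proving that $\Psi_*$ and $\Psi_H$ are convex, proper, and lower semicontinuous, and then deduce the Borel measurability of $H$ in $L_{\bar\varphi^*}$ and of $L_\varphi$ in $H$ as a direct consequence. The key observation is that the sublevel sets of a proper lower semicontinuous function are closed, hence Borel; and since a norm and its extension-by-$+\infty$ outside a subspace can be written as a supremum of continuous linear functionals, lower semicontinuity should follow naturally.

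First I would treat $\Psi_*$. Properness is immediate since $\Psi_*(0)=0$ and the function never takes $-\infty$. Convexity is clear on $H$ by the triangle inequality and homogeneity of the $H$-norm, and the $+\infty$ values outside $H$ do not spoil convexity since $H$ is a linear subspace of $L_{\bar\varphi^*}$. For lower semicontinuity, the natural route is a variational characterization: I would express $\norm{y}_H$ for $y\in H$ as a supremum over the unit ball of $H$ of the scalar product $(y,\cdot)$. Using the generalized duality $[\cdot,\cdot]$ from Lemma~\ref{lem:duality}, together with the identification $H\embed E_{\bar\varphi^*}\embed L_{\bar\varphi^*}=E_\varphi^*$ from assumption {\bf H1}, one can write for every $y\in L_{\bar\varphi^*}$
\[
  \Psi_*(y)=\sup\left\{[y,x]:\;x\in E_\varphi\cap H,\;\norm{x}_H\leq1\right\},
\]
where the right-hand side equals $\norm{y}_H$ when $y\in H$ (by density of $E_\varphi$ in $H$ from {\bf H1}) and equals $+\infty$ when $y\in L_{\bar\varphi^*}\setminus H$ (since an unbounded linear functional is detected along the dense set of test elements). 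Each map $y\mapsto[y,x]$ is continuous on $L_{\bar\varphi^*}$ by the generalized H\"older inequality \eqref{ineq:holder}, so $\Psi_*$ is a supremum of continuous functions and therefore lower semicontinuous. The analogous argument for $\Psi_H$ uses the Luxemburg-norm duality: since $\bar\varphi^*$ is the conjugate of $\bar\varphi$ and the inequality \eqref{ineq:norms} holds, one represents $\norm{y}_\varphi$ for $y\in L_\varphi$ via a supremum of the form $\sup\{[z,y]:\;z\in E_{\bar\varphi^*},\;\norm{z}_{\bar\varphi^*}\leq1\}$, again a supremum of $H$-continuous (indeed $L_\varphi$-continuous) linear functionals, yielding lower semicontinuity on $H$.

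The final step is purely formal: a proper lower semicontinuous function $\Psi$ has Borel (in fact closed) sublevel sets $\{\Psi\leq c\}$, and its effective domain is $\{\Psi<+\infty\}=\bigcup_{k\in\enne}\{\Psi\leq k\}$, a countable union of closed sets, hence an $F_\sigma$ and in particular Borel. Applying this to $\Psi_*$ gives that $H=\{\Psi_*<+\infty\}$ is Borel in $L_{\bar\varphi^*}$, and applying it to $\Psi_H$ gives that $L_\varphi=\{\Psi_H<+\infty\}$ is Borel in $H$, which is exactly the claim.

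The main obstacle I anticipate is verifying the supremum representation of the norms rigorously, specifically showing that the supremum over the restricted test set $\{x\in E_\varphi:\norm{x}_H\leq1\}$ genuinely recovers $\norm{y}_H$ for $y\in H$ and blows up to $+\infty$ for $y\notin H$. The former requires the density of $E_\varphi$ in $H$ to approximate the optimal direction, while the latter requires care: one must argue that if $y\in L_{\bar\varphi^*}$ is not in $H$, then the functional $x\mapsto[y,x]$ is genuinely unbounded on $H$-bounded elements of $E_\varphi$ — this uses that $H$ is identified with a \emph{closed} subspace of $E_\varphi^*$ together with a separation argument. The rest of the proof is then routine lower-semicontinuity bookkeeping.
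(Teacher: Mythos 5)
Your treatment of $\Psi_*$ is correct but follows a genuinely different route from the paper: you obtain lower semicontinuity by writing $\Psi_*$ as a supremum of the $L_{\bar\varphi^*}$-continuous functionals $y\mapsto\ip{y}{x}_{E_\varphi^*,E_\varphi}$ over $x\in E_\varphi$ with $\norm{x}_H\leq1$, using the density of $E_\varphi$ in $H$ to recover $\norm{y}_H$ and the identification $H\cong i^*(H)\subset E_\varphi^*$ to detect non-membership. The paper instead argues directly from the reflexivity of $H$: a sequence bounded in $H$ and convergent in $L_{\bar\varphi^*}$ has a weakly convergent subsequence in $H$, whose weak limit must coincide with the $L_{\bar\varphi^*}$-limit, and one concludes by weak lower semicontinuity of $\norm{\cdot}_H$. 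Both arguments are valid; yours trades reflexivity for the density assumption in {\bf H1}.

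The $\Psi_H$ part, however, has a genuine gap. First, the representation $\norm{y}_\varphi=\sup\{[z,y]:\,z\in E_{\bar\varphi^*},\ \norm{z}_{\bar\varphi^*}\leq1\}$ is not an identity: pairing against the Luxemburg unit ball of the conjugate produces the Orlicz/Amemiya-type norm, which is only \emph{equivalent} to $\norm{\cdot}_\varphi$ within the factor $2$ of \eqref{ineq:holder} (already for $\varphi(t)=|t|^p/p$ on $\erre$ the two norms differ), and lower semicontinuity does not transfer between equivalent norms. Second, and more seriously, the functionals $y\mapsto[z,y]$ for general $z\in E_{\bar\varphi^*}$ are \emph{not} $H$-continuous: the generalized H\"older inequality controls $[z,y]$ by $\norm{y}_\varphi$, not by $\norm{y}_H$, and $E_{\bar\varphi^*}$ is in general strictly larger than $H$. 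Hence your supremum is not a supremum of functions continuous on $H$, and lower semicontinuity of $\Psi_H$ as a function on $H$ does not follow. Restricting the test set to $z\in H$ would restore $H$-continuity, but then you would need both a norming property of $H\cap B_{\bar\varphi^*}$ over $L_\varphi$ and a biduality-type statement to show the supremum blows up off $L_\varphi$; neither is available in this non-reflexive setting. The paper avoids all of this with a direct normalization argument: given $y_n\to y$ in $H$ with $\norm{y_n}_\varphi\leq C$, extract a subsequence with $\lambda_k:=\norm{y_{n_k}}_\varphi\to\lambda\leq C$; if $\lambda>0$ then $y_{n_k}/\lambda_k\to y/\lambda$ in $H$, and the lower semicontinuity of the modular $\varphi$ on $H$ (assumption {\bf H0}) gives $\varphi(y/\lambda)\leq\liminf_k\varphi(y_{n_k}/\lambda_k)\leq1$, whence $y\in L_\varphi$ and $\norm{y}_\varphi\leq\lambda\leq C$. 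You should replace your duality argument for $\Psi_H$ by an argument of this kind, based on the lower semicontinuity of $\varphi$ itself rather than on a dual representation of the Luxemburg norm. Your final $F_\sigma$ step is correct and is exactly the paper's.
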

\begin{proof}
It is clear that $\Psi_*$ and $\Psi_H$ are convex and proper.
Moreover, the lower semicontinuity of
$\Psi_*$ it follows directly from the reflexivity of $H$.
As for $\Psi_H$, given $(y_n)_n\subset L_\varphi$ and $y\in H$
such that $y_n\to y$ in $H$ and $\|y_n\|_{\varphi}\leq C$
for some arbitrary constant $C>0$, we need to check that 
$y\in L_\varphi$ and $\|y\|_\varphi\leq C$.
To this end, we note that
the real sequence $(\norm{y_n}_\varphi)_n$
is bounded, so that there exists $\lambda\geq0$
and a subsequence $(y_{n_k})_k$ such that 
$\lambda_k:=\norm{y_{n_k}}_\varphi\to\lambda$ as $k\to\infty$. 
If $\lambda=0$, then trivially $y=0\in L_\varphi$.
Otherwise, if $\lambda>0$ then
$\lambda_k>\lambda/2$ for $k$ sufficiently large, so that
\[
  \limsup_{k\to\infty}\norm{\frac{y_{n_k}}{\lambda_k} - \frac y{\lambda}}_H
  \leq\frac2{\lambda}\limsup_{k\to\infty}\norm{y_{n_k}-y}_H + 
  \norm{y}_H\limsup_{k\to\infty}\left|\frac1{\lambda_k} - \frac1{\lambda}\right| =0\,.
\]
Hence $y_{n_k}/\lambda_k \to y/\lambda$ in $H$,
and by lower semicontinuity of $\varphi$
and definition of $\lambda_k$
we have
  \[
  \varphi(y/\lambda)\leq\liminf_{k\to\infty}\varphi(y_{n_k}/\lambda_k)\leq 1\,,
  \]
which implies that $y\in L_\varphi$ and $\norm{y}_\varphi\leq \lambda\leq C$.
Hence, also $\Psi_H$ is lower semicontinuous.\\
Eventually, the choice of $\Psi_*$ and $\Psi_H$ implies, by 
lower semicontinuity, that
the sets $\{y\in L_{\bar\varphi^*}: \Psi_*(y)\leq K\}$ and 
$\{x\in H: \Psi_H(x)\leq K\}$ are closed in $L_{\bar\varphi^*}$
and $H$, respectively, for all $K>0$. Hence, since
\[
  H=\bigcup_{k\in\enne}\left\{y\in L_{\bar\varphi^*}: \Psi_*(y)\leq \frac1k\right\}\,,\qquad
  L_\varphi=\bigcup_{k\in\enne}\left\{x\in H: \Psi_H(x)\leq \frac1k\right\}\,,
\]
we deduce that $H$ is a Borel subset of $L_{\bar\varphi^*}$ and that 
$L_\varphi$ is a Borel subset of $H$.
\end{proof}

Consequently, by Lemma~\ref{lem:borel},
the facts that $u$ is essentially $L_\varphi$-valued 
and $L_{\bar\varphi^*}$-Borel measurable
implies that 
$u\colon [0,T]\to L_\varphi$ 
and $u\colon [0,T]\to H$ are Borel measurable as well. In particular, 
this implies that $\norm{u}_\varphi$, $\norm{u}_H \colon [0,T]\to\erre$
are measurable. 

Thanks to {\bf H0} and
the separability of $H$, 
the condition $\varphi(\alpha u)\in L^1(0,T)$ implies that 
\[
  u \in L^s(0,T; H)\,.
\]
Moreover, noting that from Lemma~\ref{lem:prop}~(2)--(3) we have 
$\alpha\norm{u}_\varphi\leq1+\varphi(\alpha u)$, we infer that
\[
  \norm{u}_\varphi \in L^1(0,T)\,.
\]

Now, for every $n\in\enne$ we set $u_n:=T_n^Hu$ and note that 
\[
  u_n\in C^k([0,T]; H)\quad\forall\,k\in\enne\,,
\]
where, as $n\to\infty$, we have
  \begin{align}
  \label{aux1}
  u_n\to u \quad&\text{in } L^s(0,T; H)\,,\\
  \label{aux2}
  \partial_t u_n \to \partial_t u \quad&\text{in } L^1(0,T; L_{\bar\varphi^*})\,,\\
  \label{aux2'}
  T_n^Hu_2'\to u_2' \quad&\text{in } L^1(0,T; H)\,.
\end{align}
Furthermore, note that $u_n$ is essentially $L_\varphi$-valued:
indeed, thanks to the abstract Jensen inequality for 
sub-Markovian operators (see Haase \cite[Thm.~3.4]{haase}),
for almost every $t\in(0,T)$
we have that 
\[
  \varphi(\alpha u_n(t))=\varphi(T_n^H(\alpha u)(t)) \leq
  T_n\left[\varphi(\alpha u)\right](t)\,.
\]
Since $\varphi(\alpha u)\in L^1(0,T)$, by definition of $T_n$
one has also that $T_n(\varphi(\alpha u))\in L^1(0,T)$ for all $n\in\enne$, 
which yields in turn by comparison that $\varphi (\alpha u_n)\in L^1(0,T)$
for all $n\in\enne$. This clearly implies that $u_n$ is essentially $L_\varphi$-valued
and, after integration in time and by contraction of $T_n$ in $L^1(0,T)$, that
\[
  u_n\colon (0,T)\to L_\varphi\,, \qquad 
  \norm{\varphi(\alpha u_n)}_{L^1(0,T)}\leq\norm{\varphi(\alpha u)}_{L^1(0,T)}\,.
\]
Furthermore, 
for every $y\in H$ and almost every $t\in(0,T)$,
by definition of $T_n$ and $T_n^H$ we have that 
\[
  (y, u_n(t))=(y, T_n^Hu(t))=T_n(y, u(t)) \qquad\forall\,y\in H\,.
\]
As $H$ is dense in $L_{\bar\varphi^*}$ by {\bf H2ii}, this implies that
\[
  [y, u_n(t)] = T_n[y,u(t)] \qquad\forall\,y\in L_{\bar\varphi^*}\,,\quad\text{for a.e.~$t\in(0,T)$}\,.
\]
Since $[y,u]\in L^1(0,T)$ for every $y\in L_{\bar\varphi^*}$, letting $n\to\infty$ we deduce that 
\beq
\label{aux3}
  [y, u_n] \to [y,u] \quad\text{in } L^1(0,T)\,, \quad\forall\,y\in L_{\bar\varphi^*}\,.
\eeq

Now, since in particular  $u_n\in C^1([0,T]; H)$, 
it is well-known (see \cite[Thm.~1.9]{barbu-monot}) that 
\[
\frac{d}{dt}\frac{1}{2} \|u_n\|^2_H=
(\partial_t u_n, u_n)=
[\partial_t u_n, u_n] \qquad\text{in } [0,T]\,,
\]
yielding, after integration in time
\beq\label{eq:vt=vs+int}
  \frac12\norm{u_n(t)}_H^2 - \frac12\norm{u_n(s)}_H^2 = 
  \int_s^t[\partial_t u_n(r), u_n(r)]\, dr\,, \qquad
  \forall\,s,t\in[0,T]\,.
\eeq
By  the abstract Jensen inequality,
since $T_n^{L_{\bar\varphi^*}}$ coincides with $T_n^H$ on $L^1(0,T; H)$,
\begin{align*}
  &\int_s^t[\partial_t u_n(r), u_n(r)]\, dr
  \leq
  \int_0^T|[\partial_t u_n(r), u_n(r)]|\, dr\\
  &\leq\frac1{\alpha^2}\int_0^T\varphi(\alpha u_n) + 
  \frac1{\alpha^2}\int_0^T\bar\varphi^*(\alpha T_n^{L_{\bar\varphi^*}} u_1')
  +\int_0^T\norm{T_n^Hu_2'(s)}_H\norm{u_n(s)}_H\,ds\\
  &\leq\frac1{\alpha^2}\int_0^T\varphi(\alpha u) + 
  \frac1{\alpha^2}\int_0^T\bar\varphi^*(\alpha u_1')
  +\int_0^T\norm{u_2'(s)}_H\norm{u_n(s)}_H\,ds\,.
\end{align*}
Moreover, using the H\"older inequality and the 
weighted Young inequality 
on the last term of the right-hand side in the form 
\[
  ab\leq\frac14 a^2 + b^2\qquad\forall\,a,b\geq0\,,
\]
we deduce that 
\begin{align*}
  \int_s^t[\partial_t u_n(r), u_n(r)]\, dr
  &\leq \frac1{\alpha^2}\int_0^T\varphi(\alpha u) + 
  \frac1{\alpha^2}\int_0^T\bar\varphi^*(\alpha u_1')
  +\norm{u_n}_{L^\infty(0,T; H)}\norm{u_2'}_{L^1(0,T; H)}\\
  &\leq \frac1{\alpha^2}\int_0^T\varphi(\alpha u) + 
  \frac1{\alpha^2}\int_0^T\bar\varphi^*(\alpha u_1')
  +\frac{1}4\norm{u_n}^2_{L^\infty(0,T; H)} + \norm{u_2'}^2_{L^1(0,T; H)}\,.
\end{align*}
Taking supremum in $t\in[0,T]$ in \eqref{eq:vt=vs+int} and
rearranging the terms, we obtain then
\[
  \frac14\norm{u_n}_{L^\infty(0,T; H)}^2\leq
  \frac12\norm{u_n(s)}_H^2 + C \qquad\forall\,s\in[0,T]
\]
where
\[
  C:=\frac1{\alpha^2}\int_0^T\varphi(\alpha u) + 
  \frac1{\alpha^2}\int_0^T\bar\varphi^*(\alpha u_1')
  + \norm{u_2'}^2_{L^1(0,T; H)}
\]
is independent of $n\in\enne$.
Taking square roots at both sides, and integrating with respect 
to $s$ on $(0,T)$ we get
\[
  \frac T2\norm{u_n}_{L^\infty(0,T; H)} \leq 
  \frac1{\sqrt2}\norm{u_n}_{L^1(0,T; H)} + \sqrt C T \leq
  \frac1{\sqrt2}\norm{u}_{L^1(0,T; H)} + \sqrt C T\,,
\]
from which 
we infer that 
\[
  \norm{u_n}_{L^\infty(0,T; H)}\leq C\,,
\]
so that $u\in L^\infty(0,T; H)$ and 
\beq
  \label{aux4}
  u_n\wstarto u \quad\text{in } L^\infty(0,T; H)\,.
\eeq

Now, we have 
\[
  [\partial_t u_n, u_n]=[T_n^{L_{\bar\varphi^*}}u_1', u_n] +(T_n^Hu_2', u_n)\,,
\]
where thanks to \eqref{aux4} and \eqref{aux2'} it holds
\[
  \int_s^t(T_n^Hu_2'(r), u_n(r))\,dr \to \int_s^t(u_2'(r), u(r))\,dr \qquad\forall\,s,t\in[0,T]\,.
\]
Moreover, 
thanks to the convergences \eqref{aux2} and \eqref{aux3} of $(u_n)_n$,
we have that, possibly extracting a non-relabelled subsequence,
\[
  [y,u_n] \to [y,u] \quad\forall\,y\in L_{\bar\varphi^*}\quad\text{a.e.~in } (0,T)\,,
  \qquad\quad
  T_n^{L_{\bar\varphi^*}}u_1' \to u_1' \quad\text{in } L_{\bar\varphi^*}
  \quad\text{a.e.~in } (0,T)\,,
\]
and similarly, since $\norm{u}_\varphi\in L^1(0,T)$, that
\[
  T_n\norm{u}_\varphi\to\norm{u}_\varphi \quad\text{a.e.~in } (0,T)\,.
\]
By the the H\"older inequality \eqref{ineq:holder}, 
using again the abstract Jensen inequality applied to the convex function 
$\norm{\cdot}_\varphi$, we also have then
almost everywhere on $(0,T)$
\begin{align*}
	\left|[T_n^{L_{\bar\varphi^*}}u_1', u_n] - [u_1', u]\right|
	&\leq
	\left|[T_n^{L_{\bar\varphi^*}}u_1'-u_1', u_n]\right| +\left|[u_1', u-u_n]\right| \\
	&\leq
	2\|T_n^{L_{\bar\varphi^*}}u_1'-u_1'\|_{{\bar{\varphi}^*}} \|T_n^Hu\|_{\varphi} 
	+\left|[u_1', u-u_n]\right|\\
	&\leq
	2\|T_n^{L_{\bar\varphi^*}}u_1'-u_1'\|_{{\bar{\varphi}^*}} T_n\|u\|_{\varphi}
	+\left|[u_1', u-u_n]\right|\to 0\,,
\end{align*}
yielding 
\[
  [T_n^{L_{\bar\varphi^*}}u_1', u_n] \to [u_1', u] \qquad\text{a.e.~in } (0,T)\,.
\]
Moreover, thanks to the 
Young inequality and the abstract Jensen inequality for 
sub-markovian operators (see again \cite[Thm.~3.4]{haase}),
\begin{align*}
  \pm\alpha^2[T_n^{L_{\bar\varphi^*}}u_1', u_n]&\leq
  \varphi(\pm \alpha u_n) + \bar\varphi^*(\alpha T_n^{L_{\bar\varphi^*}}u_1') 
  =\varphi(\alpha T_n^Hu) + \bar\varphi^*(\alpha T_n^{L_{\bar\varphi^*}}u_1')\\
  &\leq T_n\varphi(\alpha u) + T_n\bar\varphi^*(\alpha u_1')\,.
\end{align*}
This implies that 
\[
  |[T_n^{L_{\bar\varphi^*}}u_1', u_n]|\leq\frac1{\alpha^2} T_n\left(\varphi(\alpha u)
  +\bar\varphi^*(\alpha u_1')\right) \quad\text{a.e.~in } (0,T)\,,\qquad\forall\,n\in\enne\,.
\]
Since by assumption $\varphi(\alpha u)+\bar\varphi^*(\alpha u_1') \in L^1(0,T)$,
the right-hand side of such inequality converges in $L^1(0,T)$, 
hence in particular is uniformly integrable on $(0,T)$.
By comparison, we infer that the sequence $([T_n^{L_{\bar\varphi^*}}u_1', u_n])_n$
in uniformly integrable on $(0,T)$ as well. By Vitali's dominated convergence theorem 
we obtain 
\[
  [T_n^{L_{\bar\varphi^*}}u_1', u_n] \to [u_1', u] \qquad\text{in } L^1(0,T)\,.
\]
taking these remarks into account and
letting now $n\to\infty$ in \eqref{eq:vt=vs+int}, we obtain that
\[
  \frac12\norm{u(t)}_H^2 - \frac12\norm{u(s)}_H^2 = 
  \int_s^t[\partial_t u(r), u(r)]\, dr \qquad\text{for a.e.~}s,t\in(0,T)\,.
\]
Clearly, this implies that $t\mapsto\norm{u(t)}_H^2$
is absolutely continuous on $[0,T]$.
In particular, the equality holds for every $s$, $t\in[0,T]$.
Moreover, since $u\in C^0([0,T]; E_\varphi^*)$
by \eqref{aux0}, 
we have that 
the function $t\mapsto(u(t), x)$ is continuous for every $x\in E_\varphi$.
Now, for any $\bar t\in[0,T]$ and $(t_k)_k\subset[0,T]$ such that 
$t_k\to \bar t$ as $k\to\infty$, since $u\in L^\infty(0,T; H)$
on a non-relabelled subsequence we have 
$u(t_k)\wto y$ in $H$ for a certain $y\in H$. Moreover,
for any $x\in E_\varphi$ it holds that $(u(t_k), x)\to (u(\bar t), x)$,
from which $(y,x)=(u(\bar t), x)$. As $E_\varphi$ is dense in $H$,
we deduce that $y=u(\bar t)$. This shows that 
$u\colon[0,T]\to H$ is weakly continuous.
As we have already proved that $t\mapsto\norm{u(t)}_H$ is continuous,
we infer that $u\in C^0([0,T]; H)$.

\subsection{Proof under assumption {\bf H2i}} 
Let us consider now the case of assumption {\bf H2i}. 
The proof is very similar to the case {\bf H2ii}, the main difference being 
that the roles of $L_{\varphi}$ and $L_{\bar\varphi^*}$ are exchanged.
Indeed, under {\bf H2i} we have that $L_\varphi$ is separable, hence
$u\in L^1(0,T; L_\varphi)$ and we can set for any $n\in\enne$ $u_n:=T_n^{L_\varphi}u$, getting
in particular 
\[
  u_n\to u \quad\text{in } L^1(0,T; L_\varphi)\,.
\]
Proceeding as before one can also show that $u\in L^\infty(0,T; H)$ and 
\[
u_n\wstarto u\quad\text{in } L^\infty(0,T; H)\,.
\]
Moreover, $\partial_tu_n=T_n^{L_{\bar\varphi^*}}u_1'+T_n^Hu_2'$, where 
\[
  T_n^Hu_2'\to u_2' \quad\text{in } L^1(0,T; H)
\]
and
\begin{align*}
  &[T_n^{L_{\bar\varphi^*}}u_1', x]\to[u_1', x] \quad\text{in } L^1(0,T)\,,
  \quad\forall\,x\in L_\varphi\,,\\
  &\norm{T_n^{L_{\bar\varphi^*}}u_1'}_{\bar\varphi^*}
  \leq T_n\norm{u_1'}_{\bar\varphi^*} \quad\text{a.e.~in } (0,T)\,.
\end{align*}
On a not relabelled subsequence, 
by the H\"older inequality we have the almost everywhere convergence
\begin{align*}
  \left|[T_n^{L_{\bar\varphi^*}}u_1', u_n] - [u_1', u]\right|
	&\leq
	\left|[T_n^{L_{\bar\varphi^*}}u_1', u_n-u]\right| +
	\left|[T_n^{L_{\bar\varphi^*}}u_1'-u_1', u]\right| \\
	&\leq
	2\|T_n^{L_{\bar\varphi^*}}u_1'\|_{{\bar{\varphi}^*}} \|u_n-u\|_{\varphi} 
	+ |[T_n^{L_{\bar\varphi^*}}u_1'-u_1',u]| \to 0\,,
\end{align*}
Hence, writing $[\partial_t u_n, u_n]=[T_n^{L_{\bar\varphi^*}}u_1', u_n] +(T_n^Hu_2', u_n)$,
on the one hand we have again
\[
  \int_s^t(T_n^Hu_2'(r), u_n(r))\,dr \to \int_s^t(u_2'(r), u(r))\,dr \qquad\forall\,s,t\in[0,T]\,,
\]
and on the other hand,
proceeding as before using the abstract Jensen inequality and the Vitali
convergence theorem, we infer that 
\[
  [T_n^{L_{\bar\varphi^*}}u_1', u_n] \to [u_1', u] \qquad\text{in } L^1(0,T)\,.
\]
This allows to pass to the limit as $n\to\infty$ as
in the {\sc Case {\bf H2ii}} and obtain
\[
  \frac12\norm{u(t)}_H^2 - \frac12\norm{u(s)}_H^2 = 
  \int_s^t[\partial_t u(r), u(r)]\, dr \qquad\text{for a.e.~}s,t\in(0,T)\,.
\]
Hence, $t\mapsto\norm{u(t)}_H^2$
is absolutely continuous on $[0,T]$, and $u\in L^\infty(0,T; H)$.
In particular, the equality holds for every $s$, $t\in[0,T]$.
Moreover, since now $u\in W^{1,1}_w(0,T; L_{\bar\varphi^*})$,
we only have that $u$ is weakly continuous in $E_\varphi^*$.
Still, this ensures that
the function $t\mapsto(u(t), x)$ is continuous for every $x\in E_\varphi$.
As $u\in L^\infty(0,T; H)$ and $E_\varphi$ is dense in $H$, 
this implies that
$u\colon[0,T]\to H$ is weakly continuous.
As we have already proved that $t\mapsto\norm{u(t)}_H$ is continuous,
we infer that $u\in C^0([0,T]; H)$. This concludes the proof
of Theorem~\ref{thm:chain}.


\section{Proof of well-posedness}
\label{sec:proof}
This section is devoted to the proof of well-posedness
contained in Theorems~\ref{thm}--\ref{thm2}.

\subsection{The approximation}
Let us denote by $A:=\partial\varphi\colon H\to2^{H}$ the subdifferential of $\varphi$.
We recall the Young inequality
\[
  (y,x)\leq\varphi(x)+\varphi^*(y)\qquad\forall\,x,y\in H,
\]
and point out that the equality holds if and only if $y\in A(x)$.

For every $\lambda>0$, let $\varphi_\lambda\colon H\to[0,+\infty)$
be the Moreau--Yosida regularization of $\varphi$, defined as
\[
  \varphi_\lambda(x):=\inf_{y\in H}\left\{\varphi(y) + \frac1{2\lambda}\norm{x-y}_H^2\right\}\,,
  \qquad x\in H.
\]
From classical results of convex and monotone analysis (see \cite[Ch.~2]{barbu-monot}),
we have that $\varphi_\lambda\in C^1(H)$, with $D\varphi_\lambda=A_\lambda$,
where $A_\lambda\colon H\to H$ is the Yosida approximation of $A$.
Let us recall that $A_\lambda$ is defined as
\[
  A_\lambda(x):=\frac{x-J_\lambda(x)}{\lambda}, 
  \qquad x\in H,
\]
where we have denoted by $J_\lambda\colon H\to H$ the resolvent of $A$, namely
\[
  J_\lambda(x):=(I+\lambda A)^{-1}(x)\,, \qquad x\in H.
\]
It is well known that $A_\lambda$ is $\frac1\lambda$-Lipschitz continuous,
$J_\lambda$ is $1$-Lipschitz continuous,
and that $A_\lambda(x)\in A(J_\lambda(x))$
for every $x\in H$.
 Moreover, 
$\varphi_\lambda$ satisfies 
\begin{align*}
  &\varphi(J_\lambda(x))\leq \varphi_\lambda(x)\leq\varphi(x) \qquad\forall\,x\in H,\\
  &\lim_{\lambda\searrow0}\varphi_\lambda(x)=\varphi(x)\qquad\forall\,x\in H.
\end{align*}

We study the approximated problem 
\[
  \begin{cases}
  \partial_t u_\lambda + A_\lambda(u_\lambda) = f,\\
  u_\lambda(0)=u_0.
  \end{cases}
\]
Since $A_\lambda$ is Lipschitz-continuous, from the classical theory 
of nonlinear evolution equations (see again \cite{barbu-monot} or 
\cite[Prop.~3.4]{brezis})
such approximated problem admits a unique solution 
\[
  u_\lambda \in W^{1,1}(0,T, H)\,.
\]

\subsection{Uniform estimates}
Let us prove some uniform estimates on $(u_\lambda)_\lambda$,
independent of $\lambda$.
To this end, testing the approximated equation by $u_\lambda$
and integrating in time yields, for every $t\in[0,T]$,
\[
  \frac12\norm{u_\lambda(t)}_H^2
  +\int_0^t\left(A_\lambda(u_\lambda(s)), u_\lambda(s)\right)\,ds=
  \frac12\norm{u_0}_H^2
  +\int_0^t\left(f(s), u_\lambda(s)\right)\,ds.
\]
Now, on the left-hand side, since $A_\lambda(u_\lambda)\in A(J_\lambda(u_\lambda))$,
by the Young inequality we have 
\begin{align*}
  (A_\lambda(u_\lambda), u_\lambda)&=
  (A_\lambda(u_\lambda), J_\lambda(u_\lambda)) + \lambda\norm{A_\lambda(u_\lambda)}_H^2\\
  &=\varphi(J_\lambda(u_\lambda)) + \varphi^*(A_\lambda(u_\lambda))
  + \lambda\norm{A_\lambda(u_\lambda)}_H^2\,.
\end{align*}
Consequently, we have
\begin{align*}
  \frac12\norm{u_\lambda(t)}_H^2
  &+\int_0^t\varphi(J_\lambda(u_\lambda(s)))\,ds
  +\int_0^t\varphi^*(A_\lambda(u_\lambda(s)))\,ds
  +\lambda\int_0^t\norm{A_\lambda(u_\lambda(s))}_H^2\,ds\\
  &=\frac12\norm{u_0}_H^2
  +\int_0^t\left(f(s), u_\lambda(s)\right)\,ds 
  \\
  &\leq\frac12\norm{u_0}_H^2+\int_0^t\norm{f(s)}_H\norm{u_\lambda(s)}_H\,ds\,.
\end{align*}
The Gronwall lemma implies then that
there exists 
$M>0$, independent of $\lambda$, such that
\begin{align}
  \label{est1}
  \norm{u_\lambda}_{C^0([0,T]; H)}^2&\leq M,\\
  \label{est2}
  \norm{\varphi(J_\lambda(u_\lambda))}_{L^1(0,T)}+
  \norm{\varphi^*(A_\lambda(u_\lambda))}_{L^1(0,T)}+
  \lambda\norm{A_\lambda(u_\lambda)}^2_{L^2(0,T; H)}&\leq M.
\end{align}
Recalling also assumption {\bf H0}, this implies that 
\beq
  \label{est3}
  \norm{J_\lambda(u_\lambda)}_{L^s(0,T; H)}^s\leq M\,.
\eeq

Now, by the estimates \eqref{est1} and 
\eqref{est3} there exist $u\in L^\infty(0,T; H)$ and $\tilde u\in L^s(0,T; H)$ such that,
as $\lambda\searrow0$,
\begin{align}
  \label{conv1}
  u_\lambda\wstarto u \quad&\text{in } L^\infty(0,T; H),\\
  \label{conv2}
  J_\lambda(u_\lambda)\wto \tilde u\quad&\text{in } L^s(0,T; H).
\end{align}
Moreover, note that by \eqref{est2} and the definition of $A_\lambda$ we have 
\[
  \norm{J_\lambda(u_\lambda)-u_\lambda}_{L^2(0,T; H)}=
  \lambda\norm{A_\lambda(u_\lambda)}_{L^2(0,T; H)}\leq M\lambda^{1/2}\to 0\,,
\]
which implies that $\tilde u=u$ and
\beq
  \label{conv3}
  J_\lambda(u_\lambda) \wto u \quad\text{in } L^s(0,T; H)\,.
\eeq
By the weak lower semicontinuity of convex integrands,
convergence \eqref{conv3}, and estimate \eqref{est2},
we deduce then that
\[
  \int_0^T\varphi(u(s))\,ds\leq\liminf_{\lambda\searrow0}
  \int_0^T\varphi(J_\lambda(u_\lambda(s)))\,ds\leq M\,.
\]
It follows that $\varphi(u)\in L^1(0,T)$, so that
$u$ is essentially $L_\varphi$-valued.
Since by Lemma~\ref{lem:borel}
$L_\varphi$ is a Borel subset of $H$ and 
$u$ is strongly measurable in $H$, we infer that $u$
is Borel-measurable in $L_\varphi$: hence, 
by definition of Borel-measurability we have that 
$\|u\|_\varphi$ is measurable. Furthermore, 
as a consequence of Lemma~\ref{lem:prop} we have that 
\[
\|u\|_\varphi \leq 1+\varphi (u) \quad\text{a.e.~in } (0,T)\,,
\]
so that by comparison $\norm{u}_\varphi\in L^1(0,T)$.
In order to prove that $u\in L^1_w(0,T; L_\varphi)$, we need to 
show that $[y,u]\in L^1(0,T)$ for every $y\in L_{\bar\varphi^*}$.
To this end, since $u:(0,T)\to L_\varphi$ is
Borel-measurable, by definition of weak topology on $L_\varphi$
it follows that $u:(0,T)\to L_\varphi$ is weakly measurable
in the classical sense, i.e.~$\langle y,u\rangle_{L_{\varphi}^*, L_{\varphi}}$
is measurable for every $y\in L_{\varphi}^*$.
Now, under assumption {\bf H2i} 
we have that $L_\varphi$ is separable (because 
so is $E_\varphi$), hence by the Pettis theorem \cite[Thm.~2, Ch.~II]{DieUhl1977}
we obtain that $u:(0,T)\to L_\varphi$ is strongly measurable:
it follows that in this case we have actually that $u\in L^1(0,T; L_\varphi)$.
In particular, since $[y,\cdot]:L_\varphi\to\erre$ is linear continuous 
by Lemma~\ref{lem:duality}, 
by composition we infer that also $[y,u]:(0,T)\to\erre$ is measurable,
hence $u\in L^1_w(0,T; L_\varphi)$.
Alternatively, under assumption {\bf H2ii} we observe that 
$[y,u]\in L^1(0,T)$ for every $y\in H$ since $u\in L^1(0,T;H)$:
hence, the density of $H$ in $L_{\bar\varphi^*}$
readily implies that $[y,u]\in L^1(0,T)$ also for every $y\in L_{\bar\varphi^*}$, and
this shows indeed that $u\in L^1_w(0,T; L_\varphi)$.

In order to deduce some compactness for $(A_\lambda(u_\lambda))_\lambda$,
we need the following lemma.
\begin{lem}
  \label{lem:V0}
  For any reflexive Banach space $V_0$ such that $V_0\embed E_\varphi$
  continuously and densely, it holds that $E_\varphi^*\embed V_0^*$ continuously
  and densely.
  Furthermore, the convex conjugate of $\varphi_0:=\varphi_{|V_0}\colon V_0\to[0,+\infty)$ is
  given by
  \[
  \varphi_0^*\colon V_0^*\to[0,+\infty]\,,\qquad
  \varphi_0^*(y)=\begin{cases}
  \bar\varphi^*(y) \quad&\text{if } y\in E_\varphi^*\,,\\
  +\infty\quad&\text{if } y\in V_0^*\setminus E_\varphi^*\,.
  \end{cases}
  \]
  If also $\varphi$ is bounded on bounded sets of $V_0$, then
  \beq\label{super}
  \lim_{\norm{y}_{V_0^*}\to+\infty}\frac{\varphi_0^*(y)}{\norm{y}_{V_0^*}}=+\infty\,.
  \eeq
\end{lem}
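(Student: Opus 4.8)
The plan is to handle the three assertions separately, reducing each to a duality argument or a standard convex-analysis estimate.

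First I would establish the dual inclusion. Writing $j\colon V_0\to E_\varphi$ for the continuous, injective, dense inclusion, its adjoint $j^*\colon E_\varphi^*\to V_0^*$ is linear and continuous, with $\ip{j^*y}{x}_{V_0^*,V_0}=\ip{y}{jx}_{E_\varphi^*,E_\varphi}$; it is injective because $j$ has dense range, which already gives $E_\varphi^*\embed V_0^*$ continuously. For the density of its range I would invoke the reflexivity of $V_0$: by Hahn--Banach it suffices to show that any $\psi\in V_0^{**}=V_0$ annihilating $j^*(E_\varphi^*)$ is zero, and since $\ip{y}{j\psi}_{E_\varphi^*,E_\varphi}=0$ for every $y\in E_\varphi^*$ forces $j\psi=0$, the injectivity of $j$ yields $\psi=0$.

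Next I would compute $\varphi_0^*$, identifying $E_\varphi^*$ with a subspace of $V_0^*$ through $j^*$. The crucial preliminary is the continuity of $\bar\varphi$ on $E_\varphi$ for the Luxemburg norm: for $x_n\to x$ in $E_\varphi$ one writes $x_n=(1-\eps)\frac{x}{1-\eps}+\eps\frac{x_n-x}{\eps}$ and combines convexity with $\varphi(\eps^{-1}(x_n-x))\to0$ (Lemma~\ref{lem:prop}~(4)) and the continuity of $t\mapsto\varphi(tx)$ on $[0,\infty)$ (finite since $x\in E_\varphi$) to get $\limsup_n\varphi(x_n)\le\varphi(x)$, the reverse inequality being lower semicontinuity. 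Hence, for $y\in E_\varphi^*$ the map $x\mapsto\ip{y}{x}_{E_\varphi^*,E_\varphi}-\varphi(x)$ is continuous on $E_\varphi$, so its suprema over the dense set $V_0$ and over $E_\varphi$ agree, giving $\varphi_0^*(y)=\bar\varphi^*(y)$. For $y\in V_0^*\setminus E_\varphi^*$ I would argue by contradiction: if $\varphi_0^*(y)=C<+\infty$ then $\ip{y}{x}_{V_0^*,V_0}\le\varphi(x)+C$ for all $x\in V_0$, and since $\norm{x}_\varphi\le1$ implies $\varphi(x)\le1$ (exactly as in the proof of Proposition~\ref{prop:dual3}), a homogeneity and sign argument gives $|\ip{y}{x}_{V_0^*,V_0}|\le(1+C)\norm{x}_\varphi$ on $V_0$; density of $V_0$ in $E_\varphi$ then extends $y$ to some $\tilde y\in E_\varphi^*$ with $j^*\tilde y=y$, contradicting $y\notin E_\varphi^*$, so $\varphi_0^*(y)=+\infty$.

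Finally, for the superlinear growth I would use directly that boundedness of $\varphi$ on bounded sets of $V_0$ means $M_R:=\sup_{\norm{x}_{V_0}\le R}\varphi_0(x)<+\infty$ for each $R>0$; then $\varphi_0^*(y)\ge\sup_{\norm{x}_{V_0}\le R}\ip{y}{x}_{V_0^*,V_0}-M_R=R\norm{y}_{V_0^*}-M_R$, whence $\varphi_0^*(y)/\norm{y}_{V_0^*}\ge R-M_R/\norm{y}_{V_0^*}$, and letting $\norm{y}_{V_0^*}\to+\infty$ and then $R\to+\infty$ yields \eqref{super}. The main obstacle I anticipate is the identity $\varphi_0^*=\bar\varphi^*$ on $E_\varphi^*$: it rests on the $\Delta_2$-free continuity of $\bar\varphi$ on $E_\varphi$ and on the extension argument controlling the complement $V_0^*\setminus E_\varphi^*$, whereas the dual inclusion and the superlinearity are comparatively routine.
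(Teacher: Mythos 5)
Your proposal is correct and follows the same three-part decomposition as the paper's proof (adjoint argument for the dual inclusion, density computation of the conjugate, Young-inequality estimate for superlinearity), but it differs in three local but worthwhile ways. First, you explicitly prove that $\bar\varphi$ is norm-continuous on $E_\varphi$ via the convex splitting $x_n=(1-\eps)\tfrac{x}{1-\eps}+\eps\tfrac{x_n-x}{\eps}$ together with Lemma~\ref{lem:prop}~(4); the paper uses the resulting convergences $\varphi(x_n)\to\varphi(x)$ tacitly in both halves of the conjugate computation, so your version fills in a step that the published argument leaves implicit. Second, you handle $y\in V_0^*\setminus E_\varphi^*$ by contraposition: finiteness of $\varphi_0^*(y)$ would give $|\ip{y}{x}_{V_0^*,V_0}|\leq(1+C)\norm{x}_\varphi$ on $V_0$ and hence an extension of $y$ to $E_\varphi^*$. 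The paper instead directly constructs a sequence $(x_n)\subset V_0$ with $\ip{y}{x_n}_{V_0^*,V_0}>n\norm{x_n}_\varphi$ converging to some $\bar x\neq0$ in $E_\varphi$; your route is cleaner and avoids having to arrange $\bar x\neq0$. Third, for \eqref{super} you bound $\varphi_0^*(y)\geq\sup_{\norm{x}_{V_0}\leq R}\ip{y}{x}_{V_0^*,V_0}-M_R=R\norm{y}_{V_0^*}-M_R$ directly from the definition of the dual norm, whereas the paper picks a norming element $x_y$ with $\ip{y}{x_y}_{V_0^*,V_0}=\norm{y}_{V_0^*}^2=\norm{x_y}_{V_0}^2$ (using reflexivity); the two estimates are equivalent, but yours does not need the supremum to be attained. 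All steps check out, so the proof is complete as proposed.
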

\begin{proof}
  The fact that $E_\varphi^*\embed V_0^*$ continuously is an immediate consequence 
  of the density of $V_0$ in $E_\varphi$, while the fact that 
  $E_\varphi^*\embed V_0^*$ densely follows from the 
  reflexivity of $V_0$ by a classical argument.\\
  Let us compute
  the convex conjugate of $\varphi_0$.
  If $V_0=E_\varphi$, then the conclusion is trivial,
  so let us suppose then that $V_0\subset E_\varphi$ strictly.
  First of all, we show $\varphi_0^*=+\infty$ on $V_0^*\setminus E_\varphi^*$.
  Let $y\in V_0^*\setminus E_\varphi^*$: this means that $y\colon V_0\to\erre$
  cannot be extended to a continuous linear functional on $E_\varphi$,
  i.e.~there is no constant $C>0$ such that $\ip{y}{x}_{V_0^*,V_0}\leq C\norm{x}_{\varphi}$
  for all $x\in V_0$. Moreover, for all $x\in E_\varphi\setminus V_0$, 
  by density of $V_0$ in $E_\varphi$ there 
  is a sequence $(x_n)_n\subset V_0$
  such that $x_n\to x$ in $E_\varphi$. 
  If for all $x\in E_\varphi\setminus V_0$ there exists $C_x>0$ such that 
  $\ip{y}{x_n}_{V_0^*,V_0}\leq C_x\norm{x_n}_{\varphi}$ for all $n\in\enne$,
  then, due to the continous embedding $V_0 \embed E_\varphi$, 
  one could extend by density $y$ to a continuous linear functional on $E_\varphi$.
  However, this is not possible since $y\in V_0^*\setminus E_\varphi^*$:
  consequently, there exists $\bar x\in E_\varphi\setminus V_0$
  and a sequence $(x_n)_n\subset V_0$
  such that
  \[
  x_n\to \bar x\quad\text{in } E_\varphi\,, \qquad
  \ip{y}{x_n}_{V_0^*,V_0}> n\norm{x_n}_{\varphi} \quad\forall\,n\in\enne\,.
  \]
  In particular, $\varphi(x_n)\to\varphi(\bar x)$ and, since $\bar x\neq0$, 
 $\ip{y}{x_n}_{V_0^*,V_0}\to+\infty$: hence
  \begin{align*}
  \varphi_0^*(y)&=\sup_{x\in V_0}\left\{\ip{y}{x}_{V_0^*, V_0}-\varphi_0(x)\right\}
  \geq\sup_{n\in\enne}\left\{\ip{y}{x_n}_{V_0^*, V_0}-\varphi(x_n)\right\}\\
  &\geq\limsup_{n\to\infty}\left(\ip{y}{x_n}_{V_0^*, V_0}-\varphi(x_n)\right)\\
  &=\limsup_{n\to\infty}\ip{y}{x_n}_{V_0^*, V_0} - \varphi(\bar x) = +\infty\,.
  \end{align*}
  This shows that $\varphi_0^*=+\infty$ on $V_0^*\setminus E_\varphi^*$.
  Let us prove now that $(\varphi_0^*)_{|E_\varphi^*}=\bar\varphi^*$.
  Let $y\in E_\varphi^*$ arbitrary:
  we have
  \begin{align*}
  \varphi_0^*(y)=\sup_{x\in V_0}\left\{\ip{y}x_{V_0^*,V_0}-\varphi(x)\right\}
  \leq\sup_{x\in E_\varphi}\left\{\ip{y}x_{E_\varphi^*, E_\varphi}-\varphi(x)\right\}=
  \bar\varphi^*(y)\,.
  \end{align*}
  On the other hand, for all $x\in E_\varphi$ there is $(x_n)_n\subset V_0$
  such that $x_n\to x$ in $E_\varphi$, so that 
  \[
  \ip{y}{x_n}_{E_\varphi^*, E_\varphi}=
  \ip{y}{x_n}_{V_0^*,V_0}\leq
  \varphi_0^*(y) + \varphi(x_n) \qquad\forall\,n\in\enne\,, 
  \]
  hence also, noting that $\varphi(x_n)\to\varphi(x)$,
  \[
    \ip{y}{x}_{E_\varphi^*, E_\varphi}\leq\varphi_0^*(y) + \varphi(x) \qquad\forall\,x\in E_\varphi\,.
    \]
  It follows that 
  \[
  \varphi_0^*(y) \geq \sup_{x\in E_\varphi}\left\{
  \ip{y}{x}_{E_\varphi^*, E_\varphi} -\varphi(x) \right\} = \bar\varphi^*(y)\,.
  \]
  The shows that $(\varphi_0^*)_{|E_\varphi^*}=\bar\varphi^*$, as required.\\
  Finally, let us show that $\varphi_0^*$ is superlinear at $\infty$. To this end,
  by the Young inequality we have that 
  \[
  \varphi_0^*(y)\geq
  \ip{y}{x}_{V_0^*, V_0} - \varphi(x) \qquad\forall\,x\in V_0\,,\quad\forall\,y\in V_0^*\,.
  \]
  Since $V_0$ is reflexive, for any $y\in V_0^*\setminus\{0\}$, there is $x_y\in V_0$
  such that $\ip{y}{x_y}_{V_0^*, V_0}=\norm{y}_{V_0^*}^2=\norm{x_y}_{V_0}^2$. 
  Choosing
  $x=Lx_y\norm{y}_{V_0^*}^{-1}$ for arbitrary $L>0$ in the last inequality yields 
  \[
  \varphi_0^*(y)\geq L\norm{y}_{V_0^*} - \varphi\left(Lx_y\norm{y}_{V_0^*}^{-1}\right)
  \qquad\forall\,y\in V_0^*\setminus\{0\}\,,\quad\forall\,L>0\,,
  \]
  where
  \[
  \norm{Lx_y\norm{y}_{V_0^*}^{-1}}_{V_0} = L \qquad\forall\,y\in V_0^*\setminus\{0\}\,,
  \quad\forall\,L>0\,.
  \]
  Since $\varphi$ is bounded on bounded subsets of $V_0$,
  there exists $C_L>0$ such that 
  \[
  \varphi_0^*(y)\geq L\norm{y}_{V_0^*} - C_L
  \qquad\forall\,y\in V_0^*\setminus\{0\}\,,\quad\forall\,L>0\,.
  \]
  Hence, for all arbitrary $K>0$,
  choosing $L=2K$, for all $y\in V_0^*$ with 
  $\norm{y}_{V_0^*}\geq C_{2K}/K$ we have
  \[
  \frac{\varphi_0^*(y)}{\norm{y}_{V_0^*}}\geq 2K - \frac{C_{2K}}{\norm{y}_{V_0^*}}\geq K\,.
  \]
  As $K>0$ is arbitrary, we can conclude.
\end{proof}

By assumption {\bf H1}, Lemma~\ref{lem:V0}, and estimate \eqref{est2},
we deduce that 
\[
  \int_0^T\varphi_0^*(A_\lambda(u_\lambda(s)))\,ds\leq M\,,
\]
where
\[
  \lim_{\norm{y}_{V_0^*}\to+\infty}\frac{\varphi_0^*(y)}{\norm{y}_{V_0^*}}=+\infty\,.
\]
In particular, there exists an
increasing sequence $(r_n)_n$ of positive numbers 
such that 
\[
  \varphi_0^*(y)\geq n \norm{y}_{V_0^*} \qquad\forall\,y\in V_0^*\,,\quad\norm{y}_{V_0^*}\geq r_n\,,
  \qquad\forall\,n\in\enne\,.
\]
This readily implies that $(A_\lambda(u_\lambda))_\lambda$ is 
bounded in $L^1(0,T; V_0^*)$. 
Moreover, 
for any measurable $I\subset[0,T]$ we have
\begin{align*}
  \int_{I}\norm{A_\lambda(u_\lambda))}_{V_0^*} &=
  \int_{I\cap\{\norm{A_\lambda(u_\lambda))}_{V_0^*}< r_n\}}
  \norm{A_\lambda(u_\lambda))}_{V_0^*} +
  \int_{I\cap\{\norm{A_\lambda(u_\lambda))}_{V_0^*}\geq r_n\}}
  \norm{A_\lambda(u_\lambda))}_{V_0^*}\\
  &\leq |I| r_n + \frac{1}n\int_I\varphi_0^*(A_\lambda(u_\lambda))
  \leq |I| r_n + \frac{M}n\,. 
\end{align*}
Hence, for any arbitrary $\eps>0$, choosing $\bar n=\bar n(\eps)$ sufficiently large 
such that $M/\bar n\leq\eps/2$, and setting $\delta=\delta(\eps):=\eps r_{\bar n}^{-1}/2$,
we have that 
\[
  \sup_{\lambda>0}\int_I\norm{A_\lambda(u_\lambda))}_{V_0^*} \leq \eps
  \qquad\forall\,I\subset[0,T]\,,\quad|I|\leq\delta\,.
\]
This implies that the family $(A_\lambda(u_\lambda))_\lambda$
is uniformly integrable in $L^1(0,T; V_0^*)$, hence also by
the Dunford--Pettis theorem that
\[
(A_\lambda(u_\lambda))_\lambda \quad\text{is sequentially weakly compact in } L^1(0,T; V_0^*)\,.
\]
We deduce that there exists $\xi\in L^1(0,T; V_0^*)$ such that,
on a not relabelled subsequence,
\beq
  \label{conv4}
  A_\lambda(u_\lambda) \wto \xi \qquad\text{in } L^1(0,T; V_0^*)\,.
\eeq
Furthermore, by the weak lower semicontinuity of the convex integrand
$\int_0^T\varphi_0^*(\cdot)\,ds$ and the estimate \eqref{est2}, we have 
\[
  \int_0^T\varphi_0^*(\xi(s))\,ds\leq 
  \liminf_{\lambda\searrow0}\int_0^T\varphi_0^*(A_\lambda(u_\lambda(s)))\,ds \leq M\,.
\]
Thanks to Lemma~\ref{lem:V0}, this implies that actually 
$\xi(t)\in E_\varphi^*$ for almost every $t\in(0,T)$ and that $\bar\varphi^*(\xi)\in L^1(0,T)$,
hence in particular that $\xi$ is essentially $L_{\bar\varphi^*}$-valued.
Moreover, proceeding as in the proof of Theorem~\ref{thm:chain}
we have that $L_{\bar\varphi^*}$ is a Borel subset of $V_0^*$,
hence 
$\xi\colon [0,T]\to L_{\bar\varphi^*}$ is Borel measurable and
$\norm{\xi}_{\bar\varphi^*}$ is measurable.
Since $\norm{y}_{\bar\varphi^*}\leq1+\bar\varphi^*(y)$ 
for all $y\in L_{\bar\varphi^*}$, we have that $\norm{\xi}_{\bar\varphi^*}\in L^1(0,T)$.
Furthermore, it also holds that $\xi\in L^1_w(0,T; L_{\bar\varphi^*})$.
Indeed, under {\bf H2ii} this is immediate since $L_{\bar\varphi^*}$
is separable and $\xi\in L^1(0,T; L_{\bar\varphi^*})$,
while under {\bf H2i} the weak measurability follows directly 
from the density of $V_0$ in $L_\varphi$ and the strong measurability 
of $\xi$ in $V_0^*$.

\subsection{Passage to the limit}
The approximated problem can be written as
\[
  u_\lambda(t) + \int_0^tA_\lambda(u_\lambda(s))\,ds = 
  u_0 + \int_0^tf(s)\,ds \qquad\forall\,t\in[0,T]\,.
\]
Fix now $t\in[0,T]$ arbitrary. By the convergence \eqref{conv4},
it follows that as $\lambda\searrow0$
\[
  \int_0^tA_\lambda(u_\lambda(s))\,ds \wto 
   \int_0^t\xi(s)\,ds \quad\text{in } V_0^*\,.
\]
By comparison, we deduce that $u_\lambda(t)$ converges weakly in $V_0^*$,
yielding thanks to \eqref{conv1} that
\[
  u_\lambda(t)\wto u(t) \quad\text{in } H\,.
\]
Hence, we have that 
\[
  u(t) + \int_0^t\xi(s)\,ds = 
  u_0 + \int_0^tf(s)\,ds \qquad\forall\,t\in[0,T]\,.
\]
This implies in particular also that $u\in W^{1,1}_w(0,T; L_{\bar\varphi^*})$,
with $\partial_t u =-\xi + f$,
hence also $u\in C^0([0,T]; H)$ by Theorem~\ref{thm:chain},
and $(u,\xi)$ solves \eqref{u}--\eqref{eq}.

We only need to show that $\xi$ is the weak realization of $\partial\varphi(u)$ in $L_{\bar\varphi^*}$,
namely condition \eqref{sub}.
To this end, note that we already proved that for every $t\in[0,T]$
\[
  \frac12\norm{u_\lambda(t)}_H^2
  +\int_0^t\left(A_\lambda(u_\lambda(s)), u_\lambda(s)\right)\,ds=
  \frac12\norm{u_0}_H^2
  +\int_0^t\left(f(s), u_\lambda(s)\right)\,ds\,,
\]
which yields, by the weak lower semicontinuity of the $H$-norm and the convergence \eqref{conv1},
\begin{align*}
  \limsup_{\lambda\searrow0}
  \int_0^T\left(A_\lambda(u_\lambda(s)), u_\lambda(s)\right)\,ds
  &=
  \frac12\norm{u_0}_H^2
  +\int_0^T\left(f(s), u(s)\right)\,ds
  -\frac12\liminf_{\lambda\searrow0}\norm{u_\lambda(T)}_H^2\\
  &\leq\frac12\norm{u_0}_H^2
  +\int_0^T\left(f(s), u(s)\right)\,ds-\frac12\norm{u(T)}_H^2\,.
\end{align*}
Furthermore, since we have proved that 
\[
  \partial_t u + \xi = f \quad\text{in } L_{\bar\varphi^*}\,, \qquad u(0)=u_0\,,
\]
taking the $[\cdot, \cdot]$ duality with $u$ and integrating on $(0,t)$,
using Theorem~\ref{thm:chain} we get exactly that 
\[
  \frac12\norm{u(t)}_H^2
  +\int_0^t\left[\xi(s), u(s)\right]\,ds=
  \frac12\norm{u_0}_H^2
  +\int_0^t\left(f(s), u(s)\right)\,ds \qquad\forall\,t\in[0,T]\,,
\]
which in particular proves the energy equality \eqref{energy}.
Putting this information together we obtain
\beq
  \label{limsup}
  \limsup_{\lambda\searrow0}\int_0^T\left(A_\lambda(u_\lambda(s)), u_\lambda(s)\right)\,ds\leq
  \int_0^T\left[\xi(s), u(s)\right]\,ds\,.
\eeq
Now, recalling that $A_\lambda\in A(J_\lambda(\cdot))$, we have that 
\[
  \varphi(J_\lambda(u_\lambda)) +
  \left(A_\lambda(u_\lambda),  z-J_\lambda(u_\lambda)\right) \leq
  \varphi(z) 
  \quad\text{a.e.~in } (0,T)\,,
  \qquad\forall\,z\in L^2(0,T; H)\,, 
\]
so in particular it holds that 
\[
  \int_0^T\varphi(J_\lambda(u_\lambda(s)))\,ds +
  \int_0^T\left(A_\lambda(u_\lambda(s)),  z(s)-J_\lambda(u_\lambda(s))\right)\,ds \leq
  \int_0^T\varphi(z(s))\,ds\,.
\]
Now, we want to let $\lambda\searrow0$ in the inequality.
To this end, note first that the convergence \eqref{conv3} and the weak
lower semicontinuity of the convex integrands yields 
\[
  \int_0^T\varphi(u(s))\,ds \leq\liminf_{\lambda\searrow 0}
  \int_0^T\varphi(J_\lambda(u_\lambda(s)))\,ds\,.
\]
Secondly, the weak convergence \eqref{conv4} readily implies that,
for all $z\in L^\infty(0,T; V_0)$,
\[
  \lim_{\lambda\searrow0}
  \int_0^T\left(A_\lambda(u_\lambda(s)),  z(s))\right)\,ds=
  \int_0^T\ip{\xi(s)}{z(s)}_{V_0^*, V_0}\,ds
  =\int_0^T[\xi(s),z(s)]\,ds\,.
\]
Finally, the limsup inequality \eqref{limsup} yields 
\begin{align*}
  &\limsup_{\lambda\searrow0}\int_0^T
  \left(A_\lambda(u_\lambda(s)),  J_\lambda(u_\lambda(s))\right)\,ds\\
  &=
  \limsup_{\lambda\searrow0}\int_0^T\left[
  \left(A_\lambda(u_\lambda(s)), u_\lambda(s)\right)
  -\lambda\norm{A_\lambda(u_\lambda(s))}_H^2
  \right]\,ds\\
  &\leq \limsup_{\lambda\searrow0}\int_0^T
  \left(A_\lambda(u_\lambda(s)), u_\lambda(s)\right)\,ds\leq
  \int_0^T[\xi(s), u(s)]\,ds\,.
\end{align*}
Hence, letting $\lambda\searrow0$ we infer that, for all 
\[
  \int_0^T\varphi(u(s))\,ds +
  \int_0^T\left[\xi(s),  z(s)-u(s)\right]\,ds \leq
  \int_0^T\varphi(z(s))\,ds \qquad\forall\,z\in L^\infty(0,T; V_0)\,.
\]
By a standard localization procedure and by the density of $V_0$ in $E_\varphi$
we have 
\[
  \varphi(u) +
  \left[\xi,  x-u\right] \leq
  \varphi(x) \qquad\forall\,x\in E_\varphi\,,\quad\text{a.e.~in } (0,T)\,.
\]
This complete the proof of condition \eqref{sub} and of
existence of solutions in Theorem~\ref{thm}.

\subsection{Continuous dependence}
We prove here the continuous dependence in Theorem~\ref{thm2}, 
which in particular implies uniqueness
of solutions. 

In the setting and notations of Theorem~\ref{thm2} we have that 
\[
  \partial_t(u_1-u_2) + \xi_1-\xi_2 = f_1-f_2 \quad\text{a.e.~in } (0,T)\,,\qquad
  (u_1-u_2)(0)=u_0^1-u_0^2\,.
\]
Moreover, note that by convexity and symmetry of $\varphi$ we have 
\[
  \varphi\left(\frac{u_1-u_2}2\right)\leq \frac12\varphi(u_1) + \frac12\varphi(u_2) \in L^1(0,T)\,,
\]
and similarly 
\[
  \bar\varphi^*\left(\frac{\partial_tu_1-\partial_t u_2}{2}\right)\leq
  \frac12\bar\varphi^*(\partial_tu_1) + \frac12\bar\varphi^*(\partial_tu_2) \in L^1(0,T)\,.
\]
Consequently, taking the $[\cdot,\cdot]$ duality with $u_1-u_2$, 
integrating on $(0,t)$ and using Theorem~\ref{thm:chain} we have,
for every $t\in[0,T]$, 
\begin{align*}
  &\frac12\norm{(u_1-u_2)(t)}_H^2 + \int_0^t\left[(\xi_1-\xi_2)(s), (u_1-u_2)(s)\right]\,ds\\
  &=\frac12\norm{u_0^1-u_0^2}_H^2 + \int_0^t\left((f_1-f_2)(s), (u_1-u_2)(s)\right)\,ds\,.
\end{align*}
Now, from \eqref{sub} we know that 
\[
  \varphi(u_1)+\bar\varphi^*(\xi_1)=[\xi_1,u_1]\,, \qquad
  \varphi(u_2)+\bar\varphi^*(\xi_2)=[\xi_2,u_2]\,,
\]
from which
\[
  [\xi_1-\xi_2, u_1-u_2]=\varphi(u_1)+\bar\varphi^*(\xi_1)
  +\varphi(u_2)+\bar\varphi^*(\xi_2) - [\xi_1, u_2] - [\xi_2, u_1]\,.
\]
By the Young inequality we also deduce that
\[
  [\xi_1, u_2]\leq \bar\varphi^*(\xi_1) + \varphi(u_2)\,, \qquad
  [\xi_2, u_1]\leq \bar\varphi^*(\xi_2) + \varphi(u_1)\,
\]
so that putting everything together we infer that 
\[
  [\xi_1-\xi_2, u_1-u_2]\geq0 \quad\text{a.e.~in } (0,T)\,.
\]
We deduce that 
\[
  \frac12\norm{(u_1-u_2)(t)}_H^2
  \leq\frac12\norm{u_0^1-u_0^2}_H^2 + \int_0^t\norm{(f_1-f_2)(s)}_H\norm{(u_1-u_2)(s)}_H\,ds
\]
for every $t\in[0,T]$, and the thesis follows then by the Gronwall lemma.


\section{Applications}
\label{sec:applications}
In this section, we thoroughly discuss a wide spectrum of applications. 
First of all, we show that the classical variational theory in reflexive and separable spaces is covered as a special case of our results.
Then, we show the applicability of our theory to much more general examples, such as evolution equations in singular Orlicz spaces, in Muselak--Orlicz spaces, and Musielak--Orlicz--Sobolev spaces.
These cover, among many other examples, PDEs in variable-exponent Sobolev spaces, in double-phase spaces, and PDE with dynamic boundary conditions driven by singular potentials.

\subsection{The classical variational theory}
\label{ssec:var}
We show here that the classical variational theory for evolution
equations is covered by our results: in this direction 
we refer the reader to the main contributions 
\cite[Thm.~4.10]{barbu-monot} and \cite{ak-ot, brez}.

Let $H$ be a Hilbert space and $V$ a separable reflexive Banach space 
such that $V\embed H$ continuously and densely.
Let $\varphi\colon V\to[0,+\infty)$ be convex, lower semicontinuous,
with $\varphi(0)=0$,
such that $V=D(\partial\varphi)$, 
where $D(\partial\varphi)$ denotes a domain of $\partial\varphi$,
and there exist 
constants $c_1$, $c_2>0$ and $p\geq2$ such that 
\[
  \ip{y}{x}\geq c_1\norm{x}_V^p\,, \quad
  \norm{y}^{p'}_{V^*}\leq c_2(1+\norm{x}_V^p)\,,
  \qquad\forall\,x\in V\,,\quad \forall\,y\in\partial\varphi(x)\,,
\]
where $p':=\frac{p}{p-1}$.
Let also $u_0\in H$
and $f\in L^2(0,T; H)$.
Then the classical variational theory ensures that there exists
a unique $(u,\xi)$ with 
\[
  u\in W^{1,p'}(0,T; V^*)\cap C^0([0,T]; H)\cap L^p(0,T; V)\,, \qquad
  \xi\in L^{p'}(0,T; V^*)
\]
such that 
\[
  \partial_t u + \xi = f\,, \qquad \xi\in\partial\varphi(u)\,, \qquad u(0)=u_0\,.
\]

Let us compare this with our results.
In this setting, it is not difficult to check that $\varphi$
(suitably extended to $+\infty$ on $H\setminus V$) is actually a modular,
and that assumption {\bf H0} holds with the choice $s=p$.
Furthermore, the growth conditions on $\partial\varphi$ imply that 
\[
  c_1'\norm{x}_V^p \leq \varphi(x) \leq c_2'(1 + \norm{x}_V^p) \qquad\forall\,x\in V\,,
\]
for some constants $c_1'$, $c_2'>0$. Consequently, we easily deduce that 
in this case
\[
  E_\varphi=L_\varphi= V\,,
\]
so that {\bf H1} holds with the trivial choice $V_0=V$.
Clearly, one has then $\bar\varphi=\varphi$, $E_{\varphi}^*=V^*$,
and $\bar\varphi^*=\varphi^*$. The growth conditions on $\varphi$ yields 
\[
  c_3\norm{y}_{V^*}^{p'} - 1/c_3'\leq
  \varphi^*(y) \leq c_4'(1 + \norm{y}_{V^*}^{p'}) \qquad\,y\in V^*\,,
\]
for some $c_3'$, $c_4'>0$, 
so that $E_{\bar\varphi^*}=L_{\bar\varphi^*}=E_{\varphi}^*=V^*$.
Finally, since $V$ is both reflexive and dense in $H$, 
it is a standard matter to check that both {\bf H2i} and {\bf H2ii} are satisfied.

Our main result Theorem~\ref{thm} implies that 
for all $u_0\in H$
and $f\in L^2(0,T; H)$,
there is a unique pair $(u,\xi)$ with 
\begin{align*}
  &u\in W^{1,1}_w(0,T; V^*)\cap C^0([0,T]; H)\cap L^1_w(0,T; V)\,, \qquad
  \xi\in L^1_w(0,T; V^*)\,,\\
  &\varphi(u),\bar\varphi^*(\xi)\in L^1(0,T)\,,
\end{align*}
such that 
\[
  \partial_t u + \xi = f\,, \qquad \xi\in\partial\varphi(u)\,, \qquad u(0)=u_0\,.
\]
Since $V$ and $V^*$ are separable, one actually has 
\[
  u\in W^{1,1}(0,T; V^*)\cap L^1(0,T; V)\,, \qquad \xi\in L^1(0,T; V^*)\,.
\]
Moreover, thanks to the growth conditions on $\varphi$, 
we immediately see that the regularity $\varphi(u)$, $\bar\varphi^*(\xi)\in L^1(0,T)$ yields 
\[
  u\in L^p(0,T; V)\,,\qquad \xi\in L^{p'}(0,T; V^*)\,.
\]
Finally, by comparison in the equation we have $\partial_t u\in L^{p'}(0,T; V^*)$ as well.

Hence, in this simplified setting, our Theorem~\ref{thm} actually coincides with 
the classical existence result from the variational theory.

\subsection{Reaction-diffusion equations}
\label{ssec:RD}
We present here a first simple example of a class of PDEs
that falls out of the classical variational setting presented in Subsection~\ref{ssec:var},
but that can nonetheless be covered by our main existence result.

Let us consider partial differential equations in the form 
\beq
  \label{RD}
  \begin{cases}
  \partial_t u -\operatorname{div}(\partial M(\nabla u)) + \partial N(u) \ni f
  \quad&\text{in } (0,T)\times\Omega\,,\\
  u=0 \quad&\text{on } (0,T)\times\partial\Omega\,,\\
  u(0)=0 \quad&\text{in } \Omega\,,
  \end{cases}
\eeq
where $\Omega\subset\erre^d$ is a bounded Lipschitz domain, 
$T>0$ is a fixed final time, 
the data are chosen as 
$u_0\in L^2(\Omega)$ and $f\in L^2((0,T)\times \Omega)$,
and
$M\colon \erre^d\to[0,+\infty)$ and $N\colon \erre\to\erre$ are 
convex, even, lower semicontinuous and polynomially 
bounded by above and below.
More specifically, we suppose that
there exist $c_1$, $c_2>0$
and $p$, $q\geq2$ such that 
\begin{align*}
  c_1|x|^p\leq y\cdot x\,, \quad |y|^{p'}\leq c_2(1+|x|^p)
  \qquad&\forall\,x\in\erre^d\,,\quad\forall\,y\in\partial M(x)\,,\\
  c_1|r|^q\leq wr\,, \quad |w|^{q'}\leq c_2(1+|r|^q) 
  \qquad&\forall\,r\in\erre\,,\quad\forall\,w\in\partial N(r)\,,
\end{align*}
where $p'$ and $q'$ are the conjugate exponents of $p$ and $q$, respectively.
Here the case 
$M(\nabla u) = |\nabla u|^p$
leads to thoroughly studied $p$-Laplace operator: see, for example, \cite{ladSolUra1968,LadUra1968}.

Since $p$ and $q$ may be different in general, 
this setting does not fall directly in the classical framework 
presented in Subsection~\ref{ssec:var}, as the coercivity 
condition is not satisfied if $p\neq q$.
However, let us show that it can be treated by 
using our results.

One can consider the modular $\varphi\colon L^2(\Omega)\to[0,+\infty]$ defined as
\[
  \varphi(u):=\begin{cases}\displaystyle
  \int_\Omega M(\nabla u) + \int_\Omega N(u) 
  \quad&\text{if } u\in W^{1,p}_0(\Omega)\cap L^q(\Omega)\,,\\
  +\infty \quad&\text{otherwise}\,.
  \end{cases}
\]
With this notation, the PDE \eqref{RD}
can be written in the abstract form 
\[
  \partial_t u + \partial\varphi(u)\ni f\,, \qquad u(0)=u_0\,,
\]
by choosing 
\[
H:=L^2(\Omega)\,,\qquad V:=W^{1,p}_0(\Omega)\cap L^q(\Omega)\,,\qquad
V^*=W^{-1,p'}(\Omega)+ L^{q'}(\Omega)\,.
\]
Thanks to the growth conditions on $M$ and $N$ we deduce that 
\[
  c_1'\left(\norm{\nabla u}_{L^p(\Omega)}^p + \norm{u}_{L^q(\Omega)}^q\right)
  \leq\varphi(u)\leq
  c_2'\left(1+\norm{\nabla u}_{L^p(\Omega)}^p + \norm{u}_{L^q(\Omega)}^q\right)
  \qquad\forall\,u\in V\,.
\]
Consequently, assumption {\bf H0} holds with $s=\min\{p,q\}$. Furthermore, 
the growth condition readily implies that 
$V=E_\varphi=L_\varphi$ and 
$E_{\bar\varphi^*}=L_{\bar\varphi^*}=E_{\varphi}^*=V^*$, 
so that also {\bf H1--H2i--H2ii} are satisfied, 
with the choice $V_0=V$. Moreover, 
the conjugate $\bar\varphi^*\colon V^*\to[0,+\infty)$ satisfies
\[
  \bar\varphi^*(v)=\ip{v}{u}-\varphi(u) \qquad\forall\,u\in V\,,\quad\forall\,v\in\partial\bar\varphi(u)\,,
\]
where by \cite[Thm.~2.10]{barbu-monot} we have that 
\[
  \partial\bar\varphi(u)=\left\{-\operatorname{div}v_1 + v_2: \quad v_1\in\partial M(\nabla u)\,,\;
  v_2\in\partial N(u) \quad\text{a.e.~in } \Omega\right\}
  \qquad \forall\,u\in V\,.
\]
In this expression, by assumption on $M$ and $N$ we have that
$v_1\in L^{p'}(\Omega)^d$ and $v_2\in L^{q'}(\Omega)$, and
the divergence is intended in the sense of distributions on $\Omega$.

The existence Theorem~\ref{thm} ensures then that
for all
$u_0\in L^2(\Omega)$ and 
$f\in L^2((0,T)\times \Omega)$ there exists a unique pair $(u,\xi)$
with 
\begin{align*}
  &u\in W^{1,1}_w(0,T; V^*)\cap C^0([0,T]; H)\cap L^1_w(0,T; V)\,, \qquad
  \xi\in L^1_w(0,T; V^*)\,,\\
  &\varphi(u),\, \bar\varphi^*(\xi)\in L^1(0,T)\,,
\end{align*}
such that 
\[
  \partial_t u + \xi = f\,, \qquad \xi\in\partial\bar\varphi(u)\,, \qquad u(0)=u_0\,.
\]
Since $V$ and $V^*$ are separable, these conditions imply 
the existence and uniqueness of a solution of \eqref{RD} with
\begin{align*}
  &u\in L^p(0,T; W^{1,p}_0(\Omega))\cap L^q(0,T; L^q(\Omega))\,,\\
  &\partial_t u\in L^{p'}(0,T; W^{-1,p'}(\Omega)) + L^{q'}(0,T; L^{q'}(\Omega))\,,\\ 
  &\xi=-\operatorname{div}\xi_1 + \xi_2\,, \qquad\xi_1\in L^{p'}(0,T; L^{p'}(\Omega)^d)\,,
  \quad\xi_2\in L^{q'}(0,T; L^{q'}(\Omega))\,,
\end{align*}
and
\[
  \partial_t u -\operatorname{div}\xi_1+\xi_2\ni f \quad\text{in } V^*\quad\text{a.e.~in } (0,T)\,,
  \qquad u(0)=u_0\,.
\]

\subsection{Singular PDEs in Musielak--Orlicz spaces}
\label{ssec:MO}
In this subsection, we examine our approach in the setting of Muselak--Orlicz spaces.
For all the abstract theory and general properties we refer
to the classical monograph \cite[\S~7]{mus}.
We will show that our results cover several interesting cases of singular PDEs,
including evolution equations in both reflexive and non-reflexive spaces, such as
Lebesgue spaces with variable exponents, 
double-phase spaces, Orlicz spaces, and weighted Lebesgue spaces. 

Given a bounded domain $\Omega\subset\erre^d$ regular enough, 
we consider $\varphi_M \colon L^2(\Omega)\to[0,+\infty]$ of the form
\[
  \varphi_M (v) = 
  \begin{cases}
  \displaystyle
  \int_\Omega M(x, v(x)) \, dx \quad&\text{if } M(\cdot, v) \in L^1(\Omega)\,,\\
  +\infty \quad&\text{otherwise}\,.
  \end{cases}
\]
where $M\colon \Omega \times \erre \to [0,\infty)$ is a generalized strong $\Phi$-function \cite{Orlicz19} in the sense that
\begin{enumerate}
  \item $M(\cdot,z)$ is measurable for every $z \in\erre$;
  \item $M(x,\cdot)$ is convex and continuous for almost every $x\in\Omega$;
  \item $M(x,0)= \lim_{z \to 0}M(x,z) = 0$ and 
  $\lim_{z \to \infty}M(x,z) = +\infty$ for almost all $x\in\Omega$;
  \item there is $\eps\in(0,1]$ such that $M(x,\eps)\leq1$
  and $M(x,1/\eps)\geq1$ for almost all $x\in\Omega$;
  \item if $M(x,\alpha z)=0$ for all $\alpha>0$ and almost all $x\in\Omega$, 
  then $z=0$;
  \item $M(x,z)=M(x,-z)$ for all $z\in\erre$ and almost all $x\in\Omega$.
\end{enumerate}In this setting, $\varphi_M$ is a lower semicontinuous convex semi-modular on $L^2(\Omega)$.
Let us use the classical notation $L^{M}(\Omega):=L_{\varphi_M}$ and 
$E^M(\Omega):=E_{\varphi_M}$
for the respective Muselak--Orlicz spaces.
Moreover, if there exists $c>0$ such that
\[
M(x,z) \geq cz^2 \qquad\text{for a.e.~}x\in\Omega\,,\quad\forall\,z\in\erre\,,
\]
then we have $L^M(\Omega) \subset L^2(\Omega)$. Hence, we can choose
$H:=L^2(\Omega)$, and assumption {\bf H0} holds with $s = 2$.

We denote by $M^*\colon\Omega\times\erre\to[0,+\infty]$ the convex conjugate of $M$
with respect to its second variable, namely
\[
  M^*(x,z):=\sup_{y\in\erre}\left\{zy - M(x,y)\right\}\,, \qquad(x,z)\in\Omega\times\erre\,.
\]

\begin{defin}
  A function $M\colon \Omega \times \erre\to[0,+\infty]$ 
  satisfies the weak doubling condition $\Delta^w_2$ 
  if there exist a constant $k \geq 2$ and $h \in L^1(\Omega)$ such that
  \begin{equation}\label{cond:delta_2}
  	M(x,2z) \leq k M(x,z) + h(x)
  \end{equation}
  for almost all $x \in \Omega$ and all $z \in\erre$. 
  Similarly, $M$ fulfils the condition $\nabla^w_2$ if $M^*$ satisfies $\Delta^w_2$.
  Whenever $h\equiv 0$, the conditions are referred to as
  strong $\Delta_2$ and strong $\nabla_2$.
\end{defin}
\begin{defin}
  Let $M, N\colon \Omega \times \erre\to[0,+\infty]$.
  We say that $M\preceq N$
  if there exist two constants $c_1$, $c_2>0$ and $h\in L^1(\Omega)$ such that 
  \[
  M(x,z) \leq c_1N(x,c_2z) +h(x) \qquad\text{for almost every $x\in\Omega$
  and for all $z\in\erre$}\,.
  \]
  We say that $M$ and $N$ are equivalent if $M\preceq N\preceq M$.
\end{defin}
It is well-known that every function $M$ meeting the 
$\Delta^w_2$ $(\nabla^w_2)$ condition
has an equivalent function $N$ which satisfies the strong $\Delta_2$ $(\nabla_2)$ condition.

\begin{defin}\label{loc_int}
  A generalized strong $\Phi$-function $M$ is said 
  locally integrable if, for every measurable compact subset $K\subset \Omega$
  and for every $z\in\erre$, it holds that
  \[
  \int_KM(x,z)\, dx<+\infty\,.
  \]
\end{defin}

We recall the main properties of the Musielak--Orlicz spaces in the following proposition:
for detailed proofs, the reader can refer to \cite{mus, youss, Chl2018, har}.
\begin{prop}
  \label{prop:properties}
  Let $M$ be a generalized strong $\Phi$-function.
  Then, the following holds:
  \begin{itemize}
    \item[(i)] $E^M(\Omega)$ and $L^M(\Omega)$ are Banach spaces w.r.t.\ $\|\cdot\|_{\varphi_M}$,
  and $L^\infty(\Omega)$ is continuously embedded in $E^M(\Omega)$;
  \item[(ii)] if $M$ is locally integrable, then 
  the simple functions on $\Omega$ and
  the smooth functions with compact support $C^\infty_c(\Omega)$ are dense
  in $E^M(\Omega)$ w.r.t.~the norm\ $\|\cdot\|_{\varphi_M}$.
  In particular, $E^M(\Omega)$ is separable.
 Moreover, we have the characterization of the dual 
 $E^M(\Omega)^*\cong L^{M^*}(\Omega)$;
  \item[(iii)] if $M$ satisfies the $\Delta_2$ condition, then $L^M(\Omega)=E^M(\Omega)$ and 
  there exists $p>1$ such that $\frac{M(x,z)}{|z|^p}\to 0$ as $|z|\to+\infty$ for 
  almost all $x\in \Omega$;
  \item[(iv)] $L^M(\Omega)$ is reflexive if and only if $M$ satisfies both 
$\Delta^w_2$ and $\nabla^w_2$ conditions.
  \end{itemize}
\end{prop}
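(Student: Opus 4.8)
The plan is to treat the four items as the classical structural results of Musielak--Orlicz theory and to assemble them from the general modular-space machinery already developed in Section~\ref{sec:modular}, supplementing it with the pointwise information encoded in the definition of a generalized strong $\Phi$-function. Throughout I would exploit that $M(x,\cdot)$ is finite, convex, even and vanishing at the origin, and that property~(iv) of the $\Phi$-function supplies the two anchoring bounds $M(x,\eps)\leq1$ and $M(x,1/\eps)\geq1$.

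For item~(i), completeness is immediate from Proposition~\ref{prop:banach} whenever the extra coercivity $M(x,z)\geq cz^2$ is assumed, since then \eqref{ip_varphi} holds with $X=L^2(\Omega)$ and $\rho(t)=ct^2$. In the general case I would run the classical argument directly: a $\|\cdot\|_{\varphi_M}$-Cauchy sequence is modular-Cauchy by Lemma~\ref{lem:prop}~(5), and convexity together with $M(x,1/\eps)\geq1$ forces the lower bound $M(x,z)\geq\eps|z|$ for $|z|\geq1/\eps$, which yields Cauchyness in measure; passing to an a.e.\ convergent subsequence and applying Fatou's lemma to the modular then upgrades the convergence to the Luxemburg norm. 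For the embedding $L^\infty(\Omega)\embed E^M(\Omega)$, convexity with $M(x,0)=0$ and $M(x,\eps)\leq1$ give $M(x,v(x)/\lambda)\leq(\eps\lambda)^{-1}|v(x)|$ whenever $\|v\|_{L^\infty}\leq\eps\lambda$; integrating and optimizing in $\lambda$ produces $\|v\|_{\varphi_M}\leq\eps^{-1}|\Omega|\,\|v\|_{L^\infty}$.

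For item~(ii), local integrability places every constant, hence every simple function, in $E^M(\Omega)$; density of simple functions follows by truncation and dominated convergence applied to the modular, and density of $C^\infty_c(\Omega)$ by mollification, so that separability is inherited from a countable family of simple functions built on a countable generating algebra. The dual identification $E^M(\Omega)^*\cong L^{M^*}(\Omega)$ is the Musielak duality: by the Young inequality and Lemma~\ref{lem:dual}, every $g\in L^{M^*}(\Omega)$ induces a bounded functional $v\mapsto\int_\Omega gv$, and conversely any $F\in E^M(\Omega)^*$ is represented by some $g$ through a Radon--Nikodym argument applied to the set function $A\mapsto F(\chi_A)$, the $L^{M^*}$-bound being recovered by testing against normalized simple functions. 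For the detailed estimates I would refer to \cite{mus, youss, Chl2018, har}.

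Items~(iii) and~(iv) are then formal consequences. If $M$ satisfies $\Delta_2$, iterating \eqref{cond:delta_2} shows that $\varphi_M(\alpha v)<+\infty$ for a single $\alpha>0$ propagates to every $\alpha>0$ (the additive remainder staying in $L^1$), so $L^M(\Omega)=E^M(\Omega)$; moreover the same iteration gives $M(x,2^nz)\leq k^nM(x,z)$, whence $M(x,z)\leq C|z|^{\log_2 k}$ and any $p>\log_2 k$ satisfies $M(x,z)/|z|^p\to0$. For item~(iv), if both $\Delta^w_2$ and $\nabla^w_2$ hold then $L^M=E^M$ and $L^{M^*}=E^{M^*}$, so item~(ii) gives $(L^M)^*=L^{M^*}$ and, symmetrically, $(L^{M^*})^*=L^M$, yielding reflexivity; the converse uses the standard construction of a modular-finite but not norm-approximable element when a doubling condition fails, which separates $E^M$ from $L^M$ and destroys reflexivity. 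The main obstacle I anticipate is the dual representation in~(ii)---the Radon--Nikodym/Riesz step and the sharp $L^{M^*}$-bound---together with the careful treatment of non-$\Delta_2$ growth in~(iv); these are exactly the points where the finiteness and measurability structure of the $\Phi$-function must be used in full, and where I would lean most heavily on the cited literature.
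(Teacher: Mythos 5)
The first thing to say is that the paper contains no proof of Proposition~\ref{prop:properties} at all: it is introduced with ``we recall the main properties \dots\ for detailed proofs, the reader can refer to \cite{mus, youss, Chl2018, har}'', so there is nothing in the text to compare your argument against. What you have written is a reconstruction of the standard textbook route (the lower bound $M(x,z)\geq\eps|z|$ for $|z|\geq1/\eps$ from convexity and the anchoring condition, convergence in measure plus Fatou for completeness, truncation and mollification for density, a Radon--Nikodym argument for the dual identification, and iteration of the doubling inequality for items (iii)--(iv)), which is indeed the route taken in the cited references. At the level of detail the paper itself intends for this statement, most of your sketch is fine, and deferring the Radon--Nikodym step in (ii) and the converse implication in (iv) to the literature is consistent with what the authors do.

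There is, however, one step that does not close as written: the embedding $L^\infty(\Omega)\embed E^M(\Omega)$ in item (i). Your convexity estimate $M(x,v(x)/\lambda)\leq(\eps\lambda)^{-1}|v(x)|$ is only available when $\|v\|_{L^\infty}\leq\eps\lambda$, i.e.\ for sufficiently small scalings of $v$; it therefore yields $L^\infty(\Omega)\embed L^M(\Omega)$ with the stated norm bound, but membership in $E^M(\Omega)$ requires $\varphi_M(\alpha v)<+\infty$ for \emph{every} $\alpha>0$, and for $\alpha\|v\|_{L^\infty}>\eps$ the definition of a generalized strong $\Phi$-function gives no upper control on $M(x,\cdot)$. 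Concretely, taking $M(x,z)=|z|$ for $|z|\leq1$ and $M(x,z)=1+g(x)(|z|-1)$ for $|z|>1$ with $g\geq1$ measurable, finite a.e.\ but not locally integrable, produces a generalized strong $\Phi$-function for which the constant functions lie in $L^M(\Omega)$ but not in $E^M(\Omega)$. So to land in $E^M(\Omega)$ you must already invoke the local integrability of $M$ (which the proposition only assumes from item (ii) onwards), or else weaken the conclusion of (i) to $L^\infty(\Omega)\embed L^M(\Omega)$; your argument as it stands proves only the latter.
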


The evolution equation associated to this choice of $\varphi_M$ reads
\beq
  \label{MO}
  \begin{cases}
  \partial_t u(t,x) + \partial M(x,u(t,x)) \ni f(t,x)
  \quad&(t,x)\in(0,T)\times\Omega\,,\\
  u(0,x)=u_0(x) \quad&x\in\Omega\,,
  \end{cases}
\eeq
where, as in the previous subsection, 
$T>0$ is a fixed final time, and
the data are chosen as 
$u_0\in L^2(\Omega)$ and $f\in L^2((0,T)\times \Omega)$.

We have already pointed out that in the setting above, 
assumption {\bf H0} is satisfied with the choices $H:=L^2(\Omega)$ and $s=2$.
We analyze now in detail the validity of 
the hypotheses {\bf H1}, {\bf H2i--\bf H2ii} 
in connection to 
the $\Delta_2$ and $\nabla_2$ conditions for $M$.

\underline{\bf $M$ satisfies both $\Delta_2$ and $\nabla_2$.}\\
If $M$ satisfies both the $\Delta_2$ and $\nabla_2$ conditions, 
then $E^M(\Omega)=L^M(\Omega)$ is reflexive.
Moreover, if also $M$ and $M^*$ are locally integrable on $\Omega$,
by property (ii) of Proposition~\ref{prop:properties},
$E^M(\Omega)$ is separable and $E^M(\Omega)^*=L^{M^*}(\Omega)$.
In particular, it follows that $E^M(\Omega)^*=L^{M^*}(\Omega)=E^{M^*}(\Omega)$.
Hence, assumption {\bf H1} is satisfied by the trivial choice $V_0=E^M(\Omega)$,
and assumptions {\bf H2i--H2ii} hold
since $E^M(\Omega)=L^M(\Omega)$ and $H$ is dense 
in $L^{M^*}(\Omega)=E^{M^*}(\Omega)$.

In this setting, our Theorem~\ref{thm} ensures then that 
the equation \eqref{MO} has a unique solution 
\[
  u \in W^{1,1}(0,T; L^{M^*}(\Omega))\cap C^0([0,T]; L^2(\Omega)) \cap L^1(0,T; L^M(\Omega))\,,
  \quad
  \xi\in L^1(0,T; L^{M^*}(\Omega))\,,
\]
such that 
\[ 
  M(\cdot, u),\, M^*(\cdot, \xi) \in L^1(0,T)\,.
\]
Let us stress that the separability properties of $L^M(\Omega)$ and $L^{M^*}(\Omega)$
ensure actually that the measurability in time of such solutions is intended in
the usual strong sense. Moreover, 
thanks to the characterisation of $\partial M$ in \cite[Prop.~2.7]{barbu-monot}
the subdifferential relation \eqref{sub}
in this case can be written pointwise and reads
\[
  \xi \in \partial M(\cdot, u) \quad\text{a.e.~in } (0,T)\times\Omega\,.
\]

This framework where $M$ is both $\Delta_2$ and $\nabla_2$
allows to cover several interesting cases, coming 
mainly from the modelling of anisotropic/non-homogenous phenomena.
We refer the interested reader to the surveys 
\cite{HarHasLeNuo2010} and \cite{Chl2018} for more details.
For instance, the following well-known examples 
are included in this setting:
\begin{itemize}
  \item Variable exponents spaces:
         $M(x,z) = |z|^{p(x)}$, where $p\colon\Omega\to(1,+\infty)$ is measurable
         and such that 
         \[
         1<p^-:=\essinf_{x\in\Omega}p(x) \leq 
         \esssup_{x\in\Omega}p(x)=:p^+<+\infty\,.
	\]
 \item Double phase spaces: $M(x,z) = |z|^{p(x)} + a(x)|z|^{q(x)}$, where
 $a\colon\Omega\to[0,+\infty)$ is measurable and bounded, and
 \[
 1<p^-\leq p < q \leq q^+ <+\infty\,.
 \]
  \item Orlicz spaces: $M(x,z) = \Phi(z)$, independently of $x\in\Omega$,
  where $\Phi$ is $\Delta_2$ and $\nabla_2$.
  \item Weighted Lebesgue spaces.
\end{itemize}

\underline{\bf $M$ satisfies either $\Delta_2$ or $\nabla_2$.}\\
Let us start with $M$ fulfilling the $\Delta_2$-condition
(but not necessarily the $\nabla_2$ condition).
In this case, $E^M(\Omega)$ coincides with $L^M(\Omega)$. 
Moreover, if $M$ is locally integrable on $\Omega$,
by property (ii) of Proposition~\ref{prop:properties}
the space $E^M(\Omega)$ is separable and dense in $L^2(\Omega)$.
Furthermore, by property (iii) there exists $p>1$ such that
$\lim_{|z| \to +\infty}{M(x,z)}/{|z|^p} = 0$ for a.e.~$x \in \Omega$:
hence, if we choose $V_0$ as the space $L^p(\Omega)$,
then $L^p(\Omega)$ is dense $E^M(\Omega)$, 
separable and reflexive, and $\varphi_M$ is bounded on bounded subsets of $L^p(\Omega)$.
This shows that {\bf H1} is satisfied.
Furthermore, assumption {\bf H2} is trivially satisfied 
since {\bf H2i} holds as $E^M(\Omega)=L^M(\Omega)$.

Let us consider now $M$ fulfilling the $\nabla_2$-condition
(but not necessarily the $\Delta_2$ condition).
In this case, $E^{M^*}(\Omega)=L^{M^*}(\Omega)$. 
Moreover, if $M$ is locally integrable, then as before we have that 
$E^M(\Omega)$ is separable and dense in $L^2(\Omega)$.
Using also property (ii) of Proposition~\ref{prop:properties}, 
we have that $C^\infty_c(\Omega)$ is dense in $E^M(\Omega)$
and $L^\infty(\Omega)\embed E^M(\Omega)$ continuously.
Hence, supposing that $\Omega$ is regular enough
(for example, bounded with Lipschitz-boundary), by the 
classical Sobolev embeddings one can choose $V_0:=W^{m,p}(\Omega)$
with $m>\frac{d}{p}$, so that $C^\infty_c(\Omega)
\subset W^{m,p}(\Omega)\embed L^\infty(\Omega)$.
This shows that assumption {\bf H1} holds.
Furthermore, assumption {\bf H2} holds due to
to the following argument:
$M^*$ is $\Delta_2$, hence $L^{M^*}(\Omega)=E^{M^*}(\Omega)$ is separable,
and the collection of simple functions is a countable and dense subset of $L^{M^*}(\Omega)$.
Consequently, $L^2(\Omega)$ is dense in $L^{M^*}(\Omega)$, 
so that in particular {\bf H2ii} is satisfied.

In this setting, Theorem~\ref{thm} ensures then that 
the equation \eqref{MO} has a unique solution 
\[
  u \in W^{1,1}_w(0,T; L^{M^*}(\Omega))
  \cap C^0([0,T]; L^2(\Omega)) \cap L^1_w(0,T; L^M(\Omega))\,,
  \quad
  \xi\in L^1_w(0,T; L^{M^*}(\Omega))\,,
\]
such that 
\[ 
  M(\cdot, u),\, M^*(\cdot, \xi) \in L^1(0,T)\,.
\]
Let us point out that the measurability in time is not 
intended in the strong sense. Nonetheless, 
if $M$ satisfies the $\Delta_2$ condition, then 
$L^{M}(\Omega)$ is separable and actually it holds that also
$u\in L^1(0,T; L^M(\Omega))$.
Similarly, if $M$ satisfies the $\nabla_2$ conditions, then 
$L^{M^*}(\Omega)$ is separable and it holds also that
$u \in W^{1,1}(0,T; L^{M^*}(\Omega))$ and 
$\xi\in L^1(0,T; L^{M^*}(\Omega))$.
As before, the differential inclusion \eqref{sub}
can be written pointwise in~$\Omega$.

This more general framework where $M$
is allowed to satisfy either 
the $\Delta_2$ or the $\nabla_2$ condition
(but not necessarily both of them)
allows to cover almost all relevant cases of 
PDEs in Musielak--Orlicz spaces.
For these reasons, the variational theory 
presented here is widely applicable to 
most interesting examples.
For instance, let us mention the following \cite{Chl2018}:
\begin{itemize}
  \item Orlicz spaces: $M(x,z) = \Phi(z)$, independently of $x\in\Omega$,
  where $\Phi$ is either $\Delta_2$ or $\nabla_2$. For instance, 
  $\Phi(z)=\exp{|z|^p}-1$, $p\geq1$, or $\Phi(z)=(|z|+1)\ln(|z|+1) - |z|$.
\end{itemize}

\subsection{Singular PDEs in Musielak--Orlicz--Sobolev spaces}
\label{ssec:MOS}
In this subsection, we show that our results conver also 
evolution problems in Muselak--Orlicz--Sobolev spaces.
Again, for the abstract theory we refer
to the classical monograph \cite[\S~10]{mus}.
This framework includes interesting cases such as
singular or degenerate evolution equations in 
Sobolev spaces with variable exponents, double-phase spaces,
Orlicz--Sobolev spaces, and weighted Sobolev spaces.
For simplicity, we will only focus on homogeneous Dirichlet boundary conditions:
other classical choices can be easily covered with natural adaptation of this approach.

Here, we assume that $N$ and $M$ are generalized strong $\Phi$-functions
as in Subsection~\ref{ssec:MO}. 
Given a bounded domain $\Omega\subset\erre^d$ regular enough, 
we recall the definitions of the following Muselak--Orlicz--Sobolev spaces
(see \cite{Chl2018}):
\begin{align*}
  W^1_0L^M(\Omega)&:=\left\{v\in W^{1,1}_0(\Omega):\;
  v, |\nabla v|\in L^M(\Omega)\right\}\,,\\
  W^1_0E^M(\Omega)&:=\left\{v\in W^{1,1}_0(\Omega):\;
  v, |\nabla v|\in E^M(\Omega)\right\}\,,\\
  V^1_0L^M(\Omega)&:=\left\{v\in W^{1,1}_0(\Omega):\;
  |\nabla v|\in L^M(\Omega)\right\}\,,\\
  V^1_0E^M(\Omega)&:=\left\{v\in W^{1,1}_0(\Omega):\;
  |\nabla v|\in E^M(\Omega)\right\}\,.
\end{align*}
If $M$ is also locally integrable in the sense of Definition~\ref{loc_int}
then all the above spaces are actually Banach spaces.
Moreover, the space of compactly supported smooth functions 
$C^\infty_c(\Omega)$ is dense in $W^1_0E^M(\Omega)$
and in $V^1_0E^M(\Omega)$: a possible proof can be
readily adapted to the arguments of \cite[\S~2]{don-trud}. 
Also, if $M$ is $\Delta_2$ and $\nabla_2$ then 
$W^1_0L^M(\Omega)=W^1_0E^M(\Omega)$ is reflexive
(see \cite[Thm.~6.3]{har}).

We introduce the 
lower semicontinuous convex semi-modular
$\varphi_{M,N} \colon L^2(\Omega)\to[0,+\infty]$ as
\[
  \varphi_{M,N} (v) = 
  \begin{cases}
  \displaystyle
  \int_\Omega N(x, v(x)) \, dx + \int_\Omega M(x, |\nabla v(x)|) \, dx\\
  \qquad\text{if } N(\cdot, v)\in L^1(\Omega)\,, \quad v\in W^{1,1}_0(\Omega)\,,\quad
  M(\cdot, |\nabla v|) \in L^1(\Omega)\,,\\
  +\infty\\
   \qquad\text{otherwise}\,.
  \end{cases}
\]
Supposing again that, for some $c>0$,
\[
M(x,z), N(x,z) \geq cz^2 \qquad\text{for a.e.~}x\in\Omega\,,\quad\forall\,z\in\erre\,,
\]
we have that $L^M(\Omega), L^N(\Omega) \subset L^2(\Omega)$ and
that $W^1_0L^M(\Omega), V^1_0L^M(\Omega) \subset H^1_0(\Omega)$.
Hence, one can characterize the modular spaces associated to $\varphi_{M,N}$
in terms of the Musielak--Orlicz--Sobolev spaces related to $N$ and $M$ as
\begin{align*}
  L_{\varphi_{M,N}}&=\left\{v\in L^N(\Omega)\cap H^1_0(\Omega):
  \;|\nabla v|\in L^M(\Omega)\right\}= L^N(\Omega)\cap V^{1}_0L^M(\Omega)\,,\\
  E_{\varphi_{M,N}}&=\left\{v\in E^N(\Omega)\cap H^1_0(\Omega):
  \;|\nabla v|\in E^M(\Omega)\right\}= E^N(\Omega)\cap V^{1}_0E^M(\Omega)\,.
\end{align*}
As before, we can choose then
$H:=L^2(\Omega)$ and assumption {\bf H0} holds with $s = 2$.

Now, we set
${\boldsymbol M}:\Omega\times\erre^d\to\erre$ as
${\boldsymbol M}(x,z):=M(x, |z|)$, for $(x,z)\in\Omega\times\erre^d$,
so in particular ${\boldsymbol M}$ is symmetric in the second argument.
With this notation, the evolution equation associated to this choice of $\varphi_{M,N}$ reads
\beq
  \label{SMO}
  \begin{cases}
  \partial_t u(t,x) -\operatorname{div} \partial {\boldsymbol M}
  (x,\nabla u(t,x)) +  \partial N(x, u(t,x)) \ni f(t,x)
  \quad&(t,x)\in(0,T)\times\Omega\,,\\
  u(t,y)=0 \quad&(t,y)\in(0,T)\times\partial\Omega\,,\\
  u(0,x)=u_0(x) \quad&x\in\Omega\,,
  \end{cases}
\eeq
where again $T>0$ is a fixed final time,
$u_0\in L^2(\Omega)$, and $f\in L^2((0,T)\times \Omega)$.

Let us now discuss the validity of 
the hypotheses {\bf H1}, {\bf H2i--\bf H2ii}.
These strongly depend on whether $M$ and/or $N$
satisfy the conditions $\Delta_2$ and/or $\nabla_2$:
for sake of brevity, we only focus on two cases, the other
ones being analogous.

\underline{\bf $M$ and $N$ satisfy both $\Delta_2$ and $\nabla_2$.}\\
By the properties above, we have that the space
$E^N(\Omega)\cap V^1_0E^M(\Omega)
=L^N(\Omega)\cap V^1_0L^M(\Omega)$ is reflexive, 
and separable if also 
$M$ and $N$ are locally integrable.
In particular, it is immediate to check that ${\bf H1}$ holds 
with the trivial choice $V_0:=E^N(\Omega)\cap V^1_0E^M(\Omega)$,
as well as both assumptions {\bf H2i--H2ii}.

Theorem~\ref{thm} ensures then that 
the equation \eqref{SMO} has a unique solution 
\[
  u \in W^{1,1}(0,T; E_{\varphi_{M,N}}^*)
  \cap C^0([0,T]; L^2(\Omega)) 
  \cap L^1(0,T; E_{\varphi_{M,N}})\,,
  \quad
  \xi\in L^1(0,T; E_{\varphi_{M,N}}^*)\,,
\]
such that 
\[ 
  \varphi_{M,N}(u),\, \varphi_{M,N}^*(\xi) \in L^1(0,T)\,.
\]
Let us comment now on the subdifferential relation \eqref{sub}.
Let us notice that since $E_{\varphi_{M,N}} = 
E^N(\Omega)\cap V^1_0E^M(\Omega)$ and 
$E_{\varphi_{M,N}}$ is dense in both 
$E^N(\Omega)$ and $V^1_0E^M(\Omega)$,
we have the representation of the dual as
\[
  E_{\varphi_{M,N}}^* \cong
  L^{N^*}(\Omega) + V^1_0E^M(\Omega)^*\,.
\]
Let us show now that
the subdifferential relation \eqref{sub}
in this case can be written as
\[
  \xi=\xi_1 -\operatorname{div} \xi_2\,, \qquad
  \xi_1 \in \partial N(\cdot, u)\,, \quad
  \xi_2\in\partial {\boldsymbol M}(\cdot, \nabla u)
  \quad \text{a.e.~in } (0,T)\times\Omega\,.
\]
To this end, we notice that we have the representation 
$\varphi_{M,N}=\varphi_N + \psi_M$, where $\varphi_N$ is 
defined as in Subsection~\ref{ssec:MO} with respect to $N$, and 
$\psi_M\colon L^2(\Omega)\to[0,+\infty]$ is given by
\[
  \psi_M(v):=
  \begin{cases}
  \displaystyle
  \int_\Omega M(x, |\nabla v(x)|) \, dx
  \qquad&\text{if }  v\in H^{1}_0(\Omega)\,,\quad
  M(\cdot, |\nabla v|) \in L^1(\Omega)\,,\\
  +\infty
   \qquad&\text{otherwise}\,.
  \end{cases}
\]

\begin{lem}
  In this setting, if $M$ satisfies the $\Delta_2$ and $\nabla_2$ conditions, 
  the subdifferential of the restriction $\psi\colon V^1_0E^M(\Omega)\to \erre$ is the operator 
  \begin{align*}
  &A_M\colon V^1_0E^M(\Omega)\to 2^{V^1_0E^M(\Omega)^*}\,,\\
  &A_M(v):=\left\{-\operatorname{div}\eta:\quad
  \eta\in L^{M^*}(\Omega)^d\,,\quad\eta\in\partial {\boldsymbol M}
  (\cdot,\nabla v) \text{ a.e.~in } \Omega\right\}\,.
  \end{align*}
\end{lem}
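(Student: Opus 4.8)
The plan is to establish the two set-inclusions $A_M(v)\subseteq\partial\psi(v)$ and $\partial\psi(v)\subseteq A_M(v)$ separately, the first being elementary and the second carrying the analytic content. Throughout I exploit the factorization $\psi=\Phi_M\circ\nabla$, where $\nabla\colon V^1_0E^M(\Omega)\to E^{\boldsymbol M}(\Omega)^d$ is the bounded linear gradient operator and $\Phi_M\colon L^{\boldsymbol M}(\Omega)^d\to[0,+\infty]$ is the integral functional $\Phi_M(w):=\int_\Omega{\boldsymbol M}(x,w(x))\,dx$. Since $M$, and hence ${\boldsymbol M}$, is $\Delta_2$ and $\nabla_2$, the space $L^{\boldsymbol M}(\Omega)^d=E^{\boldsymbol M}(\Omega)^d$ is reflexive with dual $L^{M^*}(\Omega)^d$, and $\Phi_M$ is finite-valued and continuous on it.

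For the inclusion $A_M(v)\subseteq\partial\psi(v)$, take $\eta\in L^{M^*}(\Omega)^d$ with $\eta(x)\in\partial{\boldsymbol M}(x,\nabla v(x))$ for almost every $x\in\Omega$. By the pointwise subdifferential inequality for the convex function ${\boldsymbol M}(x,\cdot)$, for every $z\in V^1_0E^M(\Omega)$ one has ${\boldsymbol M}(x,\nabla v)+\eta\cdot(\nabla z-\nabla v)\leq{\boldsymbol M}(x,\nabla z)$ almost everywhere; integrating over $\Omega$ and using that $\int_\Omega\eta\cdot\nabla(z-v)\,dx=\ip{-\operatorname{div}\eta}{z-v}$, with the boundary term vanishing since $z-v\in W^{1,1}_0(\Omega)$, yields exactly $\psi(v)+\ip{-\operatorname{div}\eta}{z-v}\leq\psi(z)$, that is $-\operatorname{div}\eta\in\partial\psi(v)$.

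For the reverse inclusion I would combine two classical facts. First, the Rockafellar-type characterization of the subdifferential of an integral functional, in the spirit of \cite[Thm.~2.10]{barbu-monot} but transported to the reflexive Musielak--Orlicz setting, gives $\partial\Phi_M(w)=\{\eta\in L^{M^*}(\Omega)^d:\ \eta(x)\in\partial{\boldsymbol M}(x,w(x))\text{ a.e.}\}$. Second, the convex chain rule $\partial(\Phi_M\circ\nabla)=\nabla^*\,\partial\Phi_M(\nabla\,\cdot)$ applies because $\Phi_M$ is continuous at every point of the range of $\nabla$; here $\nabla^*\colon L^{M^*}(\Omega)^d\to V^1_0E^M(\Omega)^*$ is precisely the distributional $-\operatorname{div}$, since $\ip{\nabla^*\eta}{v}=\int_\Omega\eta\cdot\nabla v\,dx$. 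Composing the two identities gives $\partial\psi(v)=\nabla^*\partial\Phi_M(\nabla v)=A_M(v)$, as claimed.

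The main obstacle lies in the two ingredients of the reverse inclusion. On one hand, the Rockafellar characterization of $\partial\Phi_M$ must be secured in the Musielak--Orlicz space $L^{\boldsymbol M}$ rather than in a Lebesgue space: this rests on ${\boldsymbol M}$ being a normal convex integrand, on the duality $E^{\boldsymbol M}(\Omega)^*\cong L^{M^*}(\Omega)$ from Proposition~\ref{prop:properties}(ii), and on a measurable selection argument producing the almost-everywhere subgradient $\eta$. On the other hand, the chain rule requires a constraint qualification; the $\Delta_2$--$\nabla_2$ hypotheses are exactly what render $L^{\boldsymbol M}$ reflexive and $\Phi_M$ everywhere finite and continuous, so that the qualification holds at every $\nabla v$ and the adjoint $-\operatorname{div}$ is well-defined on the whole of $L^{M^*}(\Omega)^d$. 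Some care is also needed to identify $V^1_0E^M(\Omega)^*$ so that $-\operatorname{div}\eta$ genuinely belongs to it.
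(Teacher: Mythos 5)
Your argument is correct, but it follows a genuinely different route from the one the paper takes. The paper disposes of this lemma by adapting \cite[Thm.~2.17--2.18]{barbu-monot}, i.e.\ the classical scheme for divergence-form subdifferentials: one first checks the easy inclusion $A_M(v)\subseteq\partial\psi(v)$, and then proves that $A_M$ is itself maximal monotone by solving the resolvent problem $\lambda v+A_M(v)\ni g$ via Browder--Minty surjectivity in the reflexive space $V^1_0E^M(\Omega)$ (this is where separability, reflexivity and density in $L^2(\Omega)$ enter), so that the monotone extension $\partial\psi$ of $A_M$ must coincide with it. You instead prove the hard inclusion directly, by factoring $\psi=\Phi_M\circ\nabla$ and combining the Rockafellar characterization of $\partial\Phi_M$ on the decomposable space $L^{\boldsymbol M}(\Omega)^d$ with the convex chain rule $\partial(\Phi_M\circ\nabla)=\nabla^*\,\partial\Phi_M(\nabla\,\cdot)$, the constraint qualification being supplied by the finiteness and hence continuity of $\Phi_M$ on $E^{\boldsymbol M}(\Omega)^d$ under $\Delta_2$. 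Your approach avoids solving an auxiliary elliptic problem and makes the role of the $\Delta_2$--$\nabla_2$ hypotheses more transparent (reflexivity for the duality $E^{\boldsymbol M}(\Omega)^*\cong L^{M^*}(\Omega)^d$, $\Delta_2$ for the continuity of the modular), at the price of importing two nontrivial ingredients, the integral-functional characterization and the chain rule with qualification; the paper's route keeps everything inside the monotone-operator toolbox it already uses elsewhere. Two minor remarks: the duality $E^{\boldsymbol M}(\Omega)^*\cong L^{M^*}(\Omega)^d$ requires the local integrability of $M$, which is a standing assumption of the surrounding subsection and should be invoked explicitly; and the ``boundary term'' in your first inclusion is a red herring, since $-\operatorname{div}\eta$ is \emph{defined} as the functional $z\mapsto\int_\Omega\eta\cdot\nabla z$ on $V^1_0E^M(\Omega)$, so no integration by parts is needed.
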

\begin{proof}
  The proof can be directly adapted to the arguments of \cite[Thm.~2.17--2.18]{barbu-monot},
  by taking into account that under these assumptions the space $V^1_0E^M(\Omega)$
  is separable, reflexive, and dense in $L^2(\Omega)$.
\end{proof}
\begin{lem}\label{lem:sub}
  In this setting, if $M$ and $N$ satisfy the $\Delta_2$ and $\nabla_2$ conditions, 
  the subdifferential of the restriction 
  $\varphi_{M,N}\colon E^N(\Omega)\cap V^1_0E^M(\Omega)\to \erre$ is the operator 
  \begin{align*}
  &A_{M,N}:E^N(\Omega)\cap V^1_0E^M(\Omega)\to 
  2^{L^{N^*}(\Omega)+ V^1_0E^M(\Omega)^*}\,,\\
  &A_{M,N}(v):=\Big\{-\operatorname{div}\eta + \xi:\quad
  \eta\in L^{M^*}(\Omega)^d\,,\quad \xi \in L^{N^*}(\Omega)\,, \\
  &\hspace{12.7em}
  \eta\in\partial {\boldsymbol M}
  (\cdot,\nabla v)\,, \quad\xi\in\partial N(\cdot, v)\quad \text{ a.e.~in } \Omega\Big\}\,.
  \end{align*}
\end{lem}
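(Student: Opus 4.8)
The plan is to exploit the additive decomposition $\varphi_{M,N}=\varphi_N+\psi_M$ recorded just above the statement, where $\varphi_N(v)=\int_\Omega N(x,v)\,dx$ is the zeroth-order modular from Subsection~\ref{ssec:MO} and $\psi_M$ is the gradient modular. Writing $Y:=E^N(\Omega)\cap V^1_0E^M(\Omega)=E_{\varphi_{M,N}}$, I would first record that under the $\Delta_2$ and $\nabla_2$ hypotheses on both $M$ and $N$ the two summands are finite-valued, convex, and in fact continuous on $Y$: indeed $\varphi_N$ is continuous on $E^N(\Omega)=L^N(\Omega)$ and $\psi_M$ is continuous on $V^1_0E^M(\Omega)=V^1_0L^M(\Omega)$ (this is precisely where $\Delta_2$ is used, to equate modular and norm convergence), so their restrictions to the continuously embedded subspace $Y$ stay continuous.

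The easy inclusion $A_{M,N}(v)\subseteq\partial\varphi_{M,N}(v)$ I would check by hand: given $\eta\in\partial\boldsymbol{M}(\cdot,\nabla v)$ and $\xi\in\partial N(\cdot,v)$ a.e., the pointwise Young equalities integrate up and, after an integration by parts pairing $-\operatorname{div}\eta$ against $w-v$, yield $\varphi_{M,N}(w)\ge\varphi_{M,N}(v)+\langle{-\operatorname{div}\eta+\xi},\,w-v\rangle$ for all $w\in Y$, the bracket being the standard $E_{\varphi_{M,N}}^*$--$E_{\varphi_{M,N}}$ duality. The substance is the \emph{reverse} inclusion, for which I would invoke the Moreau--Rockafellar sum rule: since $\varphi_N$ and $\psi_M$ are continuous at every point of $Y$, the qualification condition holds automatically, giving $\partial_Y\varphi_{M,N}(v)=\partial_Y\varphi_N(v)+\partial_Y\psi_M(v)$ inside $Y^*=L^{N^*}(\Omega)+V^1_0E^M(\Omega)^*$.

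It then remains to identify the two summands. For $\psi_M$, which factors through the inclusion $Y\embed V^1_0E^M(\Omega)$, the chain rule for subdifferentials under a bounded linear map (again applicable by continuity) gives $\partial_Y\psi_M(v)=A_M(v)$, i.e.\ exactly the terms $-\operatorname{div}\eta$ with $\eta\in L^{M^*}(\Omega)^d$ and $\eta\in\partial\boldsymbol{M}(\cdot,\nabla v)$ a.e., by the preceding lemma. For $\varphi_N$, which factors through $Y\embed E^N(\Omega)$, the analogous reduction together with the characterisation of the subdifferential of an integral functional (cf.\ \cite[Prop.~2.7]{barbu-monot}) yields $\partial_Y\varphi_N(v)=\{\xi\in L^{N^*}(\Omega):\xi\in\partial N(\cdot,v)\text{ a.e.}\}$. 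Adding the two descriptions reproduces exactly $A_{M,N}(v)$.

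The main obstacle I anticipate is transferring the subdifferential characterisations from the large spaces $E^N(\Omega)$ and $V^1_0E^M(\Omega)$ down to the intersection $Y$, and verifying that the two pieces genuinely fill the whole dual $Y^*=L^{N^*}(\Omega)+V^1_0E^M(\Omega)^*$ with no loss. This hinges on the density of $Y$ in each factor, so that the adjoint inclusions $L^{N^*}(\Omega)\embed Y^*$ and $V^1_0E^M(\Omega)^*\embed Y^*$ are injective and the subgradient inequalities extend by continuity; it is here that the local integrability together with the $\Delta_2/\nabla_2$ assumptions (guaranteeing separability, reflexivity, and density of smooth functions) enter decisively.
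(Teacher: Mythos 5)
Your proposal is correct and follows essentially the same route as the paper: the paper's proof is precisely an appeal to the classical sum rule for subdifferentials (Barbu, Thm.~2.11 and Rmk.~2.1) applied to the decomposition $\varphi_{M,N}=\varphi_N+\psi_M$, with the preceding lemma identifying $\partial\psi_M$ as $A_M$ and the pointwise characterisation of $\partial\varphi_N$ supplying the zeroth-order part. Your additional remarks on continuity of the summands under $\Delta_2$, the density of the intersection space in each factor, and the representation of the dual as $L^{N^*}(\Omega)+V^1_0E^M(\Omega)^*$ are exactly the hypotheses that make the cited classical result applicable.
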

\begin{proof}
  It follows from the classical result \cite[Thm.~2.11, Rmk.~2.1]{barbu-monot}.
\end{proof}

\underline{\bf $M$ and $N$ satisfy $\Delta_2$.}\\
If $M$ and $N$ fulfil the $\Delta_2$-condition
(but not necessarily the $\nabla_2$ condition),
then we have that $V^1_0E^M(\Omega)=V^1_0L^M\Omega)$
and $E^N(\Omega)=L^N(\Omega)$. Moreover, these spaces are
separable and dense in $L^2(\Omega)$
if also $M$ and $N$ are locally integrable.
As far as the choice of $V_0$ is concerned, 
combining the considerations made in Subsection~\ref{ssec:MO},
we can choose for example $V_0:=W^{1,p}_0(\Omega)$, where
$p$ realises condition (iii) of Proposition~\ref{prop:properties} for $M$ and $N$.
Clearly, $V_0$ is separable reflexive and dense in $E_{\varphi_{M,N}}$.
Furthermore, assumption {\bf H2} is trivially satisfied 
since {\bf H2i} holds as $E_{\varphi_{M,N}}=L_{\varphi_{M,N}}$.

Theorem~\ref{thm} ensures then that 
the equation \eqref{SMO} has a unique solution 
\[
  u \in W^{1,1}_w(0,T; E_{\varphi_{M,N}}^*)
  \cap C^0([0,T]; L^2(\Omega)) \cap L^1(0,T; L_{\varphi_{M,N}})\,,
  \quad
  \xi\in L^1_w(0,T; L^{M^*}(\Omega))\,,
\]
such that 
\[ 
  \varphi_{M,N}(u),\, \varphi_{M,N}^*(\xi) \in L^1(0,T)
\]
and the differential inclusion \eqref{sub} is satisfied.

\subsection{Singular PDEs with dynamic boundary conditions}
In this subsection we show that the variational theory presented in this 
papers also allows to consider PDEs with possibly singular dynamic boundary 
conditions. As a motivating example, let us focus now 
on problems in the following form:
\beq
  \label{eq:bound}
  \begin{cases}
  \partial_t u - \Delta u + \partial M(\cdot,u) \ni f
  \quad&\text{in } (0,T)\times\Omega\,,\\
  u=u_\Gamma \quad&\text{in } (0,T)\times\Gamma\,,\\
  \partial_t u_\Gamma +\partial_{\bf n}u
  +\partial M_\Gamma(\cdot,u_\Gamma) \ni f_\Gamma
  \quad&\text{in } (0,T)\times\Gamma\,,\\
  (u, u_\Gamma)(0)=(u_0, u_{0,\Gamma})
  \quad&\text{in } \Omega\times\Gamma\,,
  \end{cases}
\eeq
where $\Omega\subset\erre^d$ ($d\geq2$) is a  bounded domain
with sufficiently regular boundary $\Gamma$, 
$\Delta$
denotes the Laplace operator,
and $\bf n$ is the outward unit normal vector on $\Gamma$.
Here, $f\in L^2(0,T; L^2(\Omega))$ 
and $f_\Gamma\in L^2(0,T; L^2(\Gamma))$ represent two given forcing terms 
in the bulk and on the boundary, respectively, while 
$u_0\in L^2(\Omega)$ and $u_{0,\Gamma}\in L^2(\Gamma)$ are the given initial data.
Moreover, $M$ and $M_\Gamma$ are taken as
strong $\Phi$-fucntions on $\Omega$ and $\Gamma$, respectively, in the sense
of conditions (1)--(5) of
Subsection~\ref{ssec:MO}.

In order to frame the evolution problem \eqref{eq:bound}
in the context of modular spaces, it is natural
to consider the Hilbert space $H:=L^2(\Omega)\times L^2(\Gamma)$ and 
define $\varphi\colon H\to[0,+\infty]$ as
\begin{align*}
  \varphi(v,v_\Gamma):=
  \begin{cases}
  \displaystyle
  \int_\Omega\left(\frac12|\nabla v(x)|^2 + 
  M(x, v(x)) \right) dx +
  \int_\Gamma
  M_\Gamma(y, v_\Gamma(y)) dy,\\
  \qquad\text{if } v\in H^1(\Omega)\,, \quad M(\cdot, v) \in L^1(\Omega)\,, \\
  \qquad v_{|_\Gamma}=v_\Gamma\,, \quad
  M_\Gamma(\cdot, v_\Gamma) \in L^1(\Gamma)\,,\\
  +\infty\\
  \qquad\text{otherwise}\,.
  \end{cases}
\end{align*}
It is not difficult to check that problem \eqref{eq:bound}
can be formulated in an abstract way as
\[
  \partial_t {\bf u}  + \partial\varphi({\bf u}) \ni {\bf f}\,, \qquad
  \bu(0)=\bu_0\,,
\]
where we have used the bold notation to denote 
a general element of $H$, namely $\bu:=(u,u_\Gamma)$, 
${\bf f}:=(f,f_\Gamma)\in L^2(0,T; H)$ and $\bu_0:=(u_0, u_{0,\Gamma})\in H$.
Let us point out that for the general element ${\bf v}=(v,v_\Gamma)\in H$ 
it is not necessary true that $v_\Gamma$ is the trace of $v$ on $\Gamma$.
For this to be ensured, one needs on $v$ more regularity than just $L^2$:
for example, if ${\bf v}=(v,v_\Gamma)\in D(\varphi)$, then $v\in H^1(\Omega)$
and $v_\Gamma$ is its trace (which is indeed well-defined in $H^{1/2}(\Gamma)$).

Let us check that this problem can be framed in our variational setting.
First of all, using the notation of Subsection~\ref{ssec:MO}
for the Musielak--Orlicz spaces on $\Omega$ and $\Gamma$, we have that 
\begin{align*}
  L_\varphi&=\left\{{\bf v}\in H^1(\Omega)\times H^{1/2}(\Gamma):\quad
  v_{|_\Gamma}=v_\Gamma\,, \; v\in L^M(\Omega)\,,
  \; v_\Gamma\in L^{M_\Gamma}(\Gamma)\right\}\,,\\
  E_\varphi&=\left\{{\bf v}\in H^1(\Omega)\times H^{1/2}(\Gamma):\quad
  v_{|_\Gamma}=v_\Gamma\,,\; v\in E^M(\Omega)\,,
  \; v_\Gamma\in E^{M_\Gamma}(\Gamma)\right\}\,,
\end{align*}
and the $\norm{\cdot}_\varphi$-norm is equivalent to the 
norm of the intersection of the spaces appearing on the right-hand side, namely
\[
  \frac14\norm{{\bf v}}_\varphi \leq \norm{v}_{H^1(\Omega)} + \norm{v}_{\varphi_M}
  + \norm{v_\Gamma}_{\varphi_{M_\Gamma}}
   \leq4\norm{{\bf v}}_\varphi 
  \qquad\forall\,{\bf v}\in L_\varphi\,.
\]
It is immediate that $\varphi$ is a lower semicontinuous convex semi-modular 
on $H$, which is indeed a separable Hilbert space. Besides, the $s$-coercivity 
of $\varphi$ in the sense of assumption {\bf H0} follows from 
the respective hypothesis on $M$ and $M_\Gamma$
as in Subsection~\ref{ssec:MO} with $s=2$
(actually, in this case it is enough to require the above-mentioned coercivity 
only on $M_\Gamma$, thanks to a suitable Poincar\'e-type inequality and
the gradient contribution in $\varphi$).

Secondly, let us check assumption {\bf H1}. Using property (ii)
of Proposition~\ref{prop:properties}, we have that 
$L^\infty(\Omega)\times L^\infty(\Gamma)\embed 
E^M(\Omega)\times E^{M_\Gamma}(\Gamma)$
continuously, and $C^\infty(\overline\Omega)\times C^\infty(\Gamma)$
is dense in $E^M(\Omega)\times E^{M_\Gamma}(\Gamma)$.
Consequently,
a natural candidate for the space $V_0$ is
\[
  V_0:=\left\{{\bf v}\in H^{m}(\Omega)\times H^{m-1/2}(\Omega):
  \quad v_{|_\Gamma}=v_\Gamma\right\}\,,
  \qquad m>\frac{d}{2}\,.
\]  
Indeed, 
clearly $V_0$ is separable, reflexive, and
by the Sobolev embeddings we have the continuous inclusions
\[
  H^{m}(\Omega)\embed L^\infty(\Omega)\cap H^1(\Gamma)\,, \qquad 
  H^{m-1/2}(\Gamma)\embed L^\infty(\Gamma)\cap H^{1/2}(\Gamma)\,.
\]
Recalling the equivalence of the $\norm{\cdot}_\varphi$-norm above, 
this ensures that $V_0\embed E_\varphi$ continuously.
As far as the density of $V_0$ in $E_\varphi$ is concerned, given 
${\bf v}\in E_\varphi$, since $H^{m}(\Omega)$ is dense in 
$E^{M}(\Omega)\cap H^1(\Omega)$, there is a sequence 
$(v_{n})_n\subset H^{m}(\Omega)$ such that 
$v_{n}\to v_\Gamma$ in $E^{M}(\Omega)\cap H^1(\Omega)$.
Clearly, the traces $(v_{\Gamma,n})_n\subset H^{m-1/2}(\Gamma)$
satisfy $v_{\Gamma,n}\to v_\Gamma$ in $H^{1/2}(\Gamma)$.
Furthermore, if $d=2$, then $\Gamma$ has dimension $1$ and
by the Sobolev embeddings we have 
$H^{1/2}(\Gamma)\embed L^\infty(\Gamma)\embed E^{M_\Gamma}(\Gamma)$
continuously, so that also $v_{\Gamma,n}\to v_\Gamma$ in $E^{M_\Gamma}(\Gamma)$,
hence ${\bf v}_n\to {\bf v}$ in $E_\varphi$.
If $d=3$, then $\Gamma$ has dimension $2$ and by the Sobolev embeddings
we have $H^{1/2}(\Gamma)\embed L^4(\Gamma)$: in this case, 
if we suppose that $M_\Gamma$ is controlled by a $4$-th power, 
then $L^4(\Gamma)\embed E^{M_\Gamma}(\Gamma)$ and we can conclude as above.

Lastly, let us discuss assumption {\bf H2}.
Following the same line of Subsection~\ref{ssec:MO}
and without going into the details,
the main idea is that we can either assume $\Delta_2$-type conditions
in order to get {\bf H2i} or $\nabla_2$-type conditions in order to get {\bf H2ii}.
For example, if both $M$ and $M_\Gamma$ satisfy the $\Delta_2$ condition, 
then we have $E^M(\Omega)=L^M(\Omega)$ and 
$E^{M_\Gamma}(\Gamma)= L^{M_\Gamma}(\Gamma)$, so that condition {\bf H2i}
is trivially satisfied.

Theorem~\ref{thm} ensures then that the problem \eqref{eq:bound}
admits a unique variational solution
\[
  \bu \in W^{1,1}_w(0,T; L_{\bar\varphi^*})
  \cap C^0([0,T]; H) \cap L^1_w(0,T; L_\varphi)\,,
  \qquad
  {\boldsymbol\xi}\in L^1_w(0,T; L_{\bar\varphi^*})\,,
\]
such that 
\[ 
  \varphi(\bu),\, \varphi^*({\boldsymbol\xi}) \in L^1(0,T)\,,
\]
in the sense that 
\begin{align*}
  \int_\Omega u(t) \zeta &+ \int_\Gamma u_\Gamma(t) \zeta_\Gamma +
  \int_0^t\int_\Omega\nabla u\cdot\nabla\zeta +
  \int_0^t[\xi, \zeta]_{\varphi_M} 
  + \int_0^t[\xi_\Gamma, \zeta_\Gamma]_{\varphi_{M_\Gamma}}\\
  &=\int_\Omega u_0 \zeta + \int_\Gamma u_{0,\Gamma} \zeta_\Gamma
  +\int_0^t\int_\Omega f\zeta
  +\int_0^t\int_\Gamma f_\Gamma\zeta_\Gamma
  \qquad\forall\,t\in[0,T]\,,\qquad\forall\,{\boldsymbol\zeta}\in E_\varphi\,.
\end{align*}


\section*{Acknowledgement}
The authors are very grateful to the anonymous referees for the
valuable suggestions and remarks that improved the 
presentation of the results.
A.~Molchanova and L.~Scarpa gratefully acknowledge financial support
by the Austrian Science Fund~(FWF) through the projects M~2670 and M~2876, respectively.
A.~Molchanova has recieved funding from the European Union's Horizon 2020 research and innovation programme under the Marie Sk{\l}adowska-Curie grant agreement No 847693 and from the University of Vienna.
A.~Menovschikov was supported by the grant GA\v{C}R 20-19018Y.


\bibliographystyle{abbrv}
\def\cprime{$'$}

\end{document}